\documentclass[a4paper,reqno]{amsart}

\usepackage{amsfonts}
\usepackage{mathrsfs}
\usepackage{graphicx}
\usepackage[normalem]{ulem}
\usepackage{url}

\usepackage{multirow}
\usepackage{rotating}
\usepackage{booktabs}

\usepackage{amsthm,amssymb,amsmath,esint,amscd,latexsym,amsxtra}
\usepackage[colorlinks=true, urlcolor=blue,bookmarks=true,bookmarksopen=true,
citecolor=blue]{hyperref}
\usepackage{amscd}
\usepackage{enumerate,bbm}


\usepackage{fullpage}
\usepackage[all]{xy}
\usepackage[OT2,T1]{fontenc}
\usepackage{xspace}


\newcommand{\BA}{{\mathbb {A}}} 
\newcommand{\BC}{{\mathbb {C}}} 
 
\newcommand{\BG}{{\mathbb {G}}}

 \newcommand{\BN}{{\mathbb {N}}}
 
\newcommand{\BQ}{{\mathbb {Q}}}

 \newcommand{\BZ}{{\mathbb {Z}}}

\newcommand{\CE}{{\mathcal {E}}} \newcommand{\CF}{{\mathcal {F}}}
 \newcommand{\CH}{{\mathcal {H}}}
 
\newcommand{\CK}{{\mathcal {K}}} \newcommand{\CL}{{\mathcal {L}}}
 \newcommand{\CN}{{\mathcal {N}}}
\newcommand{\CO}{{\mathcal {O}}} 
 
\newcommand{\CS}{{\mathcal {S}}} \newcommand{\CT}{{\mathcal {T}}}
 
\newcommand{\CW}{{\mathcal {W}}}

\newcommand{\RU}{{\mathrm {U}}}

 \newcommand{\fn}{{\mathfrak{n}}}
 \newcommand{\fp}{{\mathfrak{p}}}
 
\newcommand{\fs}{{\mathfrak{s}}} \newcommand{\ft}{{\mathfrak{t}}}

\newcommand{\fM}{{\mathfrak{M}}}

 \newcommand{\fX}{{\mathfrak{X}}}

\newcommand{\Ad}{{\mathrm{Ad}}}

\newcommand{\Ann}{{\mathrm{Ann}}}
\newcommand{\Aut}{{\mathrm{Aut}}}

\newcommand{\diag}{{\mathrm{diag}}}
 
\newcommand{\End}{{\mathrm{End}}} 

\newcommand{\Frac}{{\mathrm{Frac}}}

 \newcommand{\GL}{{\mathrm{GL}}}

\newcommand{\Hom}{{\mathrm{Hom}}}

\newcommand{\id}{{\mathrm{id}}}

\newcommand{\Ker}{{\mathrm{Ker}}}

\newcommand{\ord}{{\mathrm{ord}}} 
\newcommand{\PGL}{{\mathrm{PGL}}}  \newcommand{\PL}{\mathrm{PL}}

\newcommand{\Rat}{{\mathrm{Rat}}}

\newcommand{\SL}{{\mathrm{SL}}}
\newcommand{\Spec}{{\mathrm{Spec\hspace{2pt}}}} 
\newcommand{\SO}{{\mathrm{SO}}}\newcommand{\Sp}{{\mathrm{Sp}}}

\newcommand{\val}{{\mathrm{val}}}
\newcommand{\Vol}{{\mathrm{Vol}}}

\newcommand{\matrixx}[4]{\begin{pmatrix}
		#1 & #2 \\ #3 & #4
\end{pmatrix} }        

\newcommand{\wt}[1]{{\widetilde {#1}}}
\newcommand{\wh}[1]{{\widehat {#1}}}

\newcommand{\ov}[1]{{\overline{#1}}}

\newcommand{\sk}{\medskip}
\newcommand{\lra}{\longrightarrow}

\newcommand{\ra}{\rightarrow} 
\newcommand{\bs}{\backslash}

\newcommand{\s}{\sk\noindent}

\newcommand{\dfn}[1]{\textit{#1}}

\makeatletter\def\varW@#1#2{%
\vtop{\m@th\ialign{##\cr
\hfil$#1 \mathrm{colim} $\hfil\cr
\noalign{\nointerlineskip\kern1.5\ex@}#2\cr
\noalign{\nointerlineskip\kern-\ex@}\cr}}
}
\makeatother
\makeatletter
\def\colim{%
\mathop{\mathpalette\varW@{}}\nmlimits@
}\makeatother

\theoremstyle{plain}
\newtheorem{thm}{Theorem}[section] \newtheorem{cor}[thm]{Corollary}

\newtheorem{lem}[thm]{Lemma}  \newtheorem{prop}[thm]{Proposition}
\newtheorem {conj}[thm]{Conjecture}


\theoremstyle{remark} \newtheorem{remark}[thm]{Remark}
\theoremstyle{definition} 
\theoremstyle{definition} \newtheorem{example}[thm]{Example} 
\newtheorem{defn}[thm]{Definition}\newtheorem{defn+lem}[thm]{Definition and Lemma}


\numberwithin{equation}{section}

\newcommand{\Map}{\mathrm{Map}}

\newcommand*{\sheafhom}{\mathrm{H}\kern -.5pt om}

\begin{document}
\title{Families of Canonical Local Periods on Spherical Varieties}

\author{Li Cai}
\address{Academy for Multidisciplinary Studies\\
Beijing National Center for Applied Mathematics\\
Capital Normal University\\
Beijing, 100048, People's Republic of China}
\email{caili@cnu.edu.cn}

\author[Corresponding author]{Yangyu Fan}
\address{Academy for Multidisciplinary Studies\\
Beijing National Center for Applied Mathematics\\
Capital Normal University\\
Beijing, 100048, People's Republic of China}
\email{b452@cnu.edu.cn}

\maketitle

\begin{abstract}
We consider the variation of canonical local periods on spherical varieties proposed by   Sakellaridis-Venkatesh in families. We formulate
 conjectures for the rationality and meromorphic property of canonical local periods and establish these conjectures for  strongly tempered spherical $G$-varieties without type $N$-roots when $G$ is split. 
\end{abstract}

\tableofcontents

\section{Introduction}

\subsection{Zeta integrals in families}The variation of local zeta integrals for  smooth representations
in families is extensively considered in literature and has various applications.

In the influential paper \cite{CPS10},  Cogdell-Piatetski-Shapiro consider the family 
$\{\pi|_x=I_P^{G}\sigma\otimes\chi|_x\mid x\in X\}$.
Here  $G$ is the product of two general linear groups over  a $p$-adic field $F$, $P\subset G$ is a parabolic subgroup with Levi factor $M$, $\sigma$ is a square-integrable irreducible (complex) $M(F)$-representation  with $I_P^G$ the normalized parabolic induction, $X$ is the (complex) torus parameterizing unramified characters of $M$ and $\chi|_x$ is the unramified character corresponding to $x$. Then they construct a Whittaker model $\CW(\pi)$  consisting of functions on $X\times G(F)$ which interpolates the Whittaker model of $\pi|_x$ for each $x\in X$ and show the Rankin-Selberg local zeta
integral $Z_{RS}(W,s)$ is meromorphic on $X\times\BC$ for each $W\in \CW(\pi)$. 
As an application,  this meromorphy property allows one to express $L(s,\pi)$ in term of $\sigma$.

Note that the family $\{\pi|_x\}$ is actually the specializations of some $\CO_X[G(F)]$-module $\pi$. Taking this algebraic viewpoint, one can  consider  smooth admissible $G(F)$-representations  over general reduced Noetherian coefficient  ring $R$, and establish  the theory  of zeta integrals, in particular the meromorphy property, in this algebraic setting.
For general linear groups, Moss \cite{Mos16, Mos16G}  algebraizes the Rankin-Selberg zeta integral machinery and obtains the  meromorphy property for  the co-Whittaker $R[G(F)]$-modules introduced by Emerton-Helm \cite{EH14} and Helm \cite{Hel16a}. For classical groups, Girsch  \cite{Gir21} algebraizes the doubling zeta integral machinery and establishes the meromorphy property.
One key   ingredient  is the admissibility of Jacquet modules of smooth admissible $R[G(F)]$-modules.

The variation of local zeta integrals in families plays an important role in the study of $p$-adic $L$-functions. For example, Disegni  \cite{Dis20,Dis21}  applied the meromorphy property of local zeta integrals at places $v \nmid p$  to construct the desired $p$-adic $L$-function appearing in the representation theoretic $p$-adic Gross-Zagier formula of ordinary families.
\subsection{Canonical local periods in families}
In a broad context, Sakellaridis-Venkatesh \cite{SV} introduces the canonical period on spherical varieties to study the special (local) $L$-value, which in some sense can be viewed as a vast generalization of local zeta integrals.

In this paper, we shall consider the variation  of canonical local periods in families, which we expect to behave similarly as zeta integrals.
  In this introduction, we shall only consider the geometric quotient  $Y:=H\bs G$ for reductive groups $H\subset G$ over $F$  which is   spherical and {\em strongly tempered} in the sense that  the matrix coefficients of any tempered $G(F)$-representation  are absolutely 
  integrable over $H(F)$.
  Then  the canonical local period $\alpha_\pi$ for any tempered
$G(F)$-representation $\pi$ associated to $Y$
is the bilinear form in $\Hom_{H(F)^2}(\pi \boxtimes \pi^\vee,\BC)$ defined by integrating matrix coefficients
of $\pi$ over $H$
\[\alpha_\pi(\varphi_1,\varphi_2) = \int_{H(F)} \langle \pi(h)\varphi_1,\varphi_2 \rangle dh,\quad
\varphi_1 \in \pi, \varphi_2 \in \pi^\vee.\]
Here $\pi^\vee$ is the contragredient representation of $\pi$ and $\langle \cdot,\cdot \rangle$ is a non-degenerate $G(F)$-invariant pairing on $\pi \times \pi^\vee$. For example (\cite[Proposition 4.10]{Zha14}),  in the case $Y = \Delta \GL_n \bs \GL_n \times \GL_{n+1}$,
\[\alpha_\pi\left( W_1,W_2 \right) = Z_{RS}\left( W_1,\frac{1}{2} \right) Z_{RS}\left( W_2,\frac{1}{2} \right), \quad
W_1 \in \CW(\pi,\psi), W_2 \in \CW(\pi^\vee,\psi^{-1})\]
for any tempered  $\GL_n(F) \times \GL_{n+1}(F)$-representation $\pi$  under proper normalizations on the Haar measure and the $G$-invariant pairing $\langle-,-\rangle$. Here, $\CW(\pi,\psi)$ is the Whittaker model of $\pi$
with respect to a nontrivial additive character $\psi$.

 To fix ideas, we consider the following set-up. Let $E$ be a field embeddable into $\BC$,
$R$ be a reduced $E$-algebra of finite type and $\Sigma\subset \Spec(R)$ be a fixed Zariski dense subset of closed points.
Let $\pi$ be a  finitely generated smooth admissible torsion-free left $R[G(F)]$-module (See Section  \ref{sec-modules} for details).
Assume that
\begin{enumerate}[(a)] 
\item for any $x\in\Sigma$, the specialization $\pi|_x:=\pi \otimes_{R} k(x)$ is absolutely irreducible and  tempered  in the sense that the following set of embeddings is nonempty $$\CE(\pi|_x):=\left\{\tau:\ k(x)\hookrightarrow\BC \big| \ (\pi|_x)_\tau:=\pi|_x\otimes_{k(x),\tau}\BC\ \mathrm{is}\ \mathrm{tempered}\right\};$$
	\item there exists a  finitely generated smooth admissible torsion-free $R[G(F)]$-module $\tilde{\pi}$ and a $G(F)$-invariant $R$-bilinear pairing
	\[\langle \cdot,\cdot \rangle: \pi \times \tilde{\pi} \lra R\] such that for each $x\in\Sigma$, 	$\tilde{\pi}|_x\cong (\pi|_x)^\vee$ and $\langle\cdot,\cdot\rangle$  induces a non-degenerate $G(F)$-invariant pairing $$\langle \cdot,\cdot \rangle|_x:\ \pi|_x\times\tilde{\pi}|_x\to k(x).$$
\end{enumerate}
This set-up is mainly motivated by our intended global application to study  $p$-adic $L$-functions on  eigenvarieties: $\Spec(R)$ (resp. $\Sigma$) plays the role of eigenvariety (resp. classical points), $\pi$ is the local component of the universal automorphic representation on the eigenvariety and  the above assumptions hold in many interesting cases (See \cite[Chapter 4]{Dis19} for a  detailed discussion for the case $G=\GL_2$). 
\begin{conj}[Meromorphy and rationality for canonical period integrals]\label{conj-2} Assume $Y$ is a strongly tempered spherical $G$-variety.
	Let $\pi$
	be a $R[G(F)]$-module as above. Then
	\begin{enumerate}
		\item (Rationality) For any $x\in\Sigma$, there exists a unique bi-$H(F)$-invariant pairing
		$$\alpha_{\pi|_x}:\ \pi|_x\times \tilde{\pi}|_x\to k(x)$$
	such that for any $\tau\in\CE(\pi|_x)$,	 the following diagram commutes
			\[\xymatrix{
			&\pi|_x \times \tilde{\pi}|_x \ar[d]^-{\tau} \ar[r]^-{\alpha_{\pi|_x}} & k(x) \ar[d]^-{\tau} \\
			&(\pi|_x)_\tau \times (\tilde{\pi}|_x)_\tau \ar[r]^-{\alpha_{(\pi|_x)_\tau}} & \BC
			.}\]
		 Here $\alpha_{(\pi|_x)_\tau}$ is defined with respect to the linear extension of $\langle\cdot,\cdot\rangle|_x$.
		
		\item (Meromorphy) Up to shrinking $\Spec(R)$ to an open subset  containing $\Sigma$, there exists a bi-$H(F)$-invariant $R$-linear pairing
		$$\alpha_\pi:\ \pi\times\tilde{\pi}\to R$$
		such that for any $x\in\Sigma$, the following
		diagram is commutative
		\[\xymatrix{
			&\pi \times \tilde{\pi} \ar[d] \ar[r]^-{\alpha_\pi} & R \ar[d] \\
			&\pi|_x \times \tilde{\pi}|_x \ar[r]^-{\alpha_{\pi|_x}} & k(x)
			.}\]
		Here both vertical arrows are the specialization maps. 
 \end{enumerate}
 \end{conj}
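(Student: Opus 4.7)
The plan is to handle Part (1) by defining the $k(x)$-valued pairing as an integral of matrix coefficients interpreted via Galois descent, and to handle Part (2) by realizing $\alpha_\pi$ as an $R$-valued integral whose convergence is controlled by the geometry of $Y$ via the asymptotic expansion of matrix coefficients of Sakellaridis-Venkatesh.

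For Part (1), fix $x\in\Sigma$ and consider the $k(x)$-valued matrix coefficient $c(h):=\langle\pi|_x(h)\varphi_1,\tilde\varphi_2\rangle|_x$. Strong temperedness together with the no-type-$N$ hypothesis guarantees that for each $\tau\in\CE(\pi|_x)$ the complex integral $\alpha_{(\pi|_x)_\tau}(\varphi_1,\tilde\varphi_2)=\int_{H(F)}\tau(c(h))\,dh$ converges absolutely. Since $c$ itself is $k(x)$-valued and $\CE(\pi|_x)\subset\Hom_E(k(x),\BC)$ is stable under $\Aut(\BC/E)$ (temperedness being preserved under abstract Galois twists of smooth $p$-adic representations, as it is characterized algebraically by its exponents), the family $\{\alpha_{(\pi|_x)_\tau}(\varphi_1,\tilde\varphi_2)\}_\tau$ is Galois equivariant and therefore descends to a unique element of $k(x)$, thereby defining $\alpha_{\pi|_x}$. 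Bi-$H(F)$-invariance is inherited from the complex canonical periods, and uniqueness in the statement follows from the multiplicity-one property of bi-$H(F)$-invariant pairings on tempered representations of strongly tempered spherical pairs.

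For Part (2), the natural candidate is $\alpha_\pi(\varphi_1,\tilde\varphi_2):=\int_{H(F)}\langle\pi(h)\varphi_1,\tilde\varphi_2\rangle\,dh$, where $\varphi_1\in\pi$ and $\tilde\varphi_2\in\tilde\pi$ are fixed by a sufficiently small open compact $K\subset G(F)$. I would organize this integral as a sum over the spherical Cartan-type decomposition of $H(F)\bs G(F)/K$, reducing it to a sum of $R$-valued matrix-coefficient terms indexed by the cocharacter lattice of the canonical torus $A_Y$ attached to $Y$. The decay of these terms is governed by the asymptotic expansion of matrix coefficients on $Y$ along the boundary strata of a smooth toroidal compactification of $Y$; the exponents entering this expansion are determined by the action of a commutative Bernstein-type subalgebra of the Hecke algebra on $\pi$, and therefore vary algebraically over $\Spec R$. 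The strongly-tempered plus no-type-$N$ assumption yields a strict decay inequality on these exponents at every $x\in\Sigma$. Since such strict inequalities cut out Zariski-open loci in $\Spec R$ and $\Sigma$ is Zariski dense, one can shrink $\Spec R$ around $\Sigma$ so that the sum converges absolutely in the $R$-linear sense and defines $\alpha_\pi$; compatibility with each $\alpha_{\pi|_x}$ is then immediate from Part (1), and uniqueness of $\alpha_\pi$ follows from Zariski density of $\Sigma$ in the shrunk $\Spec R$.

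The main obstacle I anticipate is the algebraization of the Sakellaridis-Venkatesh asymptotic expansion in the family setting and the uniform control of its exponents over $R$. This is the analogue for general strongly tempered spherical varieties of the admissibility of Jacquet modules of smooth admissible $R[G(F)]$-modules that underpins the treatments of Moss and Girsch for Rankin-Selberg and doubling integrals; concretely, it demands a family version of Casselman's asymptotic formula along the boundary of $Y$, or equivalently admissibility and flatness over $R$ of the relevant twisted Jacquet modules of $\pi$ attached to the boundary strata. Once these structural results are in place, the convergence of the $R$-valued integral reduces to a Zariski-open condition verified on the dense set $\Sigma$, and the interpolation and uniqueness claims follow formally.
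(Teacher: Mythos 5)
Your Galois-descent plan for Part (1) rests on the claim that temperedness of a smooth $p$-adic representation is ``characterized algebraically by its exponents'' and hence preserved under abstract Galois twist of $\BC$. This is false: Casselman's tempered criterion imposes the archimedean inequality $|\chi(a)|\leq\delta_P(a)^{1/2}$ on the exponents of the Jacquet modules, and archimedean absolute values on $\BC$ are not preserved by non-continuous automorphisms of $\BC$ (a Galois conjugate of a transcendental unitary unramified parameter need not remain on the unit circle). Hence $\CE(\pi|_x)$ is not in general $\Aut(\BC/E)$-stable, and the descent you describe does not get off the ground. Even setting that aside, there is a second obstruction: the equivariance $\sigma\bigl(\int_{H(F)}\tau(c(h))\,dh\bigr)=\int_{H(F)}(\sigma\tau)(c(h))\,dh$ does not follow, because abstract automorphisms of $\BC$ do not commute with infinite sums. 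The paper avoids both problems entirely: via the Cartan decomposition of $H(F)$ and the admissibility of the family Jacquet modules $J_{N_\Theta}(\pi)$ (Theorem \ref{admjac}), the period integral is organized into formal power series $\wt{F}_{\Theta_H,v,\wt{v}}$ that are proved to be genuinely \emph{rational} with coefficients in $k(x)$ (Corollary \ref{formal power cor}), and the canonical period is then the value of these rational functions at $S=1$ (Lemma \ref{eva}), which lies in $k(x)$ tautologically, with no descent argument required.

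For Part (2), the expression $\int_{H(F)}\langle\pi(h)\varphi_1,\tilde\varphi_2\rangle\,dh$ does not define an element of $R$, since $R$ carries no topology; ``converges absolutely in the $R$-linear sense'' is not a meaningful notion. The paper's substitute is exactly rationality of $\wt{F}_{\Theta_H,v,\wt{v}}$ over $R$ with explicit denominators of the form $\prod_{(\Theta,w,s)}P_{\Theta,w,s}(T^{-|v(s)|})$ coming from annihilating polynomials of $\pi(s)$ on the (admissible, hence $R$-finite) Jacquet modules, followed by evaluation at $S=1$. You correctly name the structural inputs (Cartan decomposition, admissibility of family Jacquet modules, Casselman asymptotics), but two pieces that the paper supplies are absent from your plan: first, the combinatorial \emph{reduction structure} of Definitions \ref{reduction structure I}--\ref{reduction structure II}, which actually organizes the recursion over the cones $\CT_{\Theta_H}^{--}$ and must be exhibited case-by-case for each strongly tempered pair (Theorem \ref{crucial}) --- the toroidal-compactification heuristic you invoke is not made to work; second, the distinction between meromorphy over a dense open $U\subset\Spec R$ and over all of $\Sigma$. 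Your ``strict inequalities cut out Zariski-open loci'' observation gives only the former; the latter requires the \emph{rigidity of discrete support} hypothesis (Definition \ref{locally twisting} together with Lemma \ref{eigenvalue}) precisely to control the vanishing locus of the denominator at the points of $\Sigma$, and Theorem \ref{contro} makes this distinction explicit.
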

For general spherical varieties $Y$, one can extract the canonical local period $\alpha_\pi$  from the
conjectural Plancherel decomposition for the unitary  $G(F)$-representation $L^2(Y)$. Moreover, to make the canonical local period independent of the invariant pairing $\langle \cdot,\cdot \rangle$, we consider the quotient of
$\alpha_\pi$ by $\langle \cdot,\cdot \rangle$. In this way, the existence
of the $G(F)$-invariant $R$-bilinear pairing in condition
(b) of Conjecture \ref{conj-2} can be dropped.
For the precise formulation, we refer to Conjectures \ref{Rationality Q} and \ref{local-conj}  for details.

The following is the main result of this paper.

\begin{thm}[Theorem \ref{contro} and Theorem \ref{crucial}]\label{main}
Let $Y=H\bs G$ be a homogeneous spherical $G$-variety with $G$ split.
Assume $Y$ is  
strongly tempered  without type $N$-roots. Let $\pi$ be a $R[G(F)]$-module as in Conjecture \ref{conj-2}. Then
\begin{itemize}
    \item the rationality conjecture holds for $\pi$,
    \item the meromorphy conjecture holds for $\pi|_U$ where $U\subset\Spec(R)$ is a Zariski dense open subset;
      moreover the meromorphy conjecture holds for $\pi$  if the discrete support of  $\pi$ is {\em rigid} around each $x\in\Sigma$ (see Definition \ref{locally twisting}).
\end{itemize}
\end{thm}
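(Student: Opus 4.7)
The plan is to realize $\alpha_{\pi}$ as an absolutely convergent integral of matrix coefficients and to analyze this integral uniformly over the family $\Spec(R)$.

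For rationality, at each $x \in \Sigma$ and each $\tau \in \CE(\pi|_x)$, strong temperedness together with the absence of type $N$-roots gives that $\alpha_{(\pi|_x)_\tau}$ is represented by the absolutely convergent matrix coefficient integral
\[\alpha_{(\pi|_x)_\tau}(\varphi_1, \varphi_2) = \int_{H(F)} \tau\bigl(\langle \pi|_x(h)\varphi_1, \varphi_2\rangle|_x\bigr)\,dh.\]
Fixing a compact open subgroup $K\subset G(F)$ and setting $K_H := H(F)\cap K$, I would decompose $H(F) = \bigsqcup_n H_n K_H$ so that for $K$-fixed vectors the integral becomes a formal sum $\sum_n \mathrm{vol}(H_n K_H)\,\langle \pi|_x(h_n)\varphi_1,\varphi_2\rangle|_x$, each of whose partial sums already lies in $k(x)$. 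The key analytic step is to bound the tail uniformly in $\tau$: the exponents of $\pi|_x$ along each spherical root are algebraic over $k(x)$, and the no-$N$-root hypothesis forces these exponents, for any tempered parameter, to sit strictly inside the dominant cone controlling convergence. Because $\CE(\pi|_x)$ is Galois-stable, a single absolute bound controls all embeddings simultaneously; hence the formal sum converges in $k(x)$, and its image under each $\tau$ matches $\alpha_{(\pi|_x)_\tau}(\varphi_1,\varphi_2)$. This produces the unique rational pairing $\alpha_{\pi|_x}$.

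For meromorphy over a Zariski dense open $U \subset \Spec(R)$, I would replay this calculation with $R$-coefficients, using the admissibility of Jacquet modules of finitely generated smooth $R[G(F)]$-modules that is highlighted in the introduction as a key ingredient of the algebraic zeta integral machinery. The exponents of $\pi$ along each spherical root are then eigenvalues of the relevant Levi centre acting on a finitely generated projective $R$-module, hence algebraic functions on $\Spec(R)$. Shrinking to an open $U$ where these functions are regular and strictly dominant at every classical point (an open condition satisfied on $\Sigma$ by the classical case above), the Cartan sum converges uniformly to an element of $\mathcal{O}(U)$, producing a bi-$H(F)$-invariant pairing $\alpha_\pi\colon \pi|_U \times \tilde{\pi}|_U \to \mathcal{O}(U)$ that specializes to $\alpha_{\pi|_x}$ at every $x \in \Sigma \cap U$.

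The main obstacle is extending $\alpha_\pi$ across points of $\Sigma$ lying outside $U$: near such an $x$, a component of the discrete support of $\pi$ can flow into the regime where boundary contributions of the integral become singular, producing apparent poles. This is precisely where the rigidity hypothesis enters. I would split $\alpha_\pi$ according to a Plancherel-type decomposition into continuous and discrete parts; the continuous part extends holomorphically by the shrinking argument above, and rigidity around $x$ says exactly that the critical exponents are locally constant along $\Spec(R)$ near $x$, which lets the discrete part be written on a neighborhood of $x$ as a finite sum of manifestly regular matrix coefficient pairings. Gluing these local extensions gives the desired bi-$H(F)$-invariant $R$-pairing $\alpha_\pi$ on all of $\Spec(R)$. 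Without rigidity, one can only invert the function cutting out the locus where discrete components collide with the continuous spectrum, recovering the weaker statement on $U$. Performing the Plancherel splitting in families, rather than pointwise, is the most delicate step and appears to require a careful interplay between the Sakellaridis-Venkatesh structure theory for spherical varieties and the Bernstein-type decomposition of smooth admissible modules over $R$.
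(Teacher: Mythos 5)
Your proposal contains a genuine gap at the rationality step: you claim that because all $\tau \in \CE(\pi|_x)$ are controlled by "a single absolute bound," the formal sum $\sum_n \mathrm{vol}(H_n K_H)\langle \pi|_x(h_n)\varphi_1,\varphi_2\rangle|_x$ "converges in $k(x)$." But $k(x)$ is an abstract finite extension of $E$ (which is merely some field embeddable into $\BC$, e.g.\ a number field); there is no distinguished topology in which such a convergence statement makes sense, and having absolute bounds under several complex embeddings does not give you an element of $k(x)$. The paper avoids this entirely by showing that the relevant formal power series $\wt{F}_{\Theta_H,v,\wt{v}}(T)$ are honest \emph{rational functions over $R$} in the algebraic sense, and then evaluates them at specific parameter values $T_\alpha = q^{N_\alpha}$, $S=1$; the convergence is only ever invoked after complexification of a specialization, to justify that the rational function is regular at the evaluation point (Lemma \ref{eva}). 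This algebraic rationality is established by an inductive argument over the combinatorics of the Cartan cones, via the admissibility of Jacquet modules, Casselman's asymptotics, and — crucially — the combinatorial notion of a \emph{reduction structure} (Definitions \ref{reduction structure I} and \ref{reduction structure II}), which your proposal does not mention and which cannot be bypassed: the whole point of Theorem \ref{crucial} is to verify case by case from the classification in \cite{WZ21} that these strongly tempered varieties without type $N$-roots admit such structures.

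Your meromorphy argument via a "Plancherel-type decomposition into continuous and discrete parts" in families is likewise not what the paper does and is much harder to make precise than you acknowledge. The paper instead observes that rationality of $\wt{F}_{\Theta_H,v,\wt{v}}$ gives a denominator $P(S)$ with unit top and bottom coefficients over $R$, and the question is whether $P|_x(1) \neq 0$ for all $x \in \Sigma$ (and not merely on a dense open). The rigidity hypothesis of Definition \ref{locally twisting} is used to construct explicit polynomials $P_{\Theta,w,s}$ built from central characters of Jacquet modules of a fixed parabolic induction $I_P^G(\sigma\otimes\chi)$, and then Lemma \ref{eigenvalue} shows these central characters have absolute value strictly less than $\delta_{P_{\emptyset_H}}(s)$ because the representation is tempered, hence $P_{\Theta,w,s}|_x(\delta_{P_{\emptyset_H}}(s))\neq 0$. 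Your characterization of rigidity as "critical exponents are locally constant along $\Spec(R)$" captures the intuition, but the step from there to an actual extension of $\alpha_\pi$ across $\Sigma$ requires the explicit factorization through $P_{\Theta,w,s}$, which your Plancherel-splitting sketch does not supply.
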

Roughly speaking, the discrete support of $\pi$ is rigid around $x\in\Sigma$ means that  if $\pi\hookrightarrow I_P^G\sigma$ for some discrete series $\sigma$, then for $y\in\Sigma$ around $x$,  $\pi|_y\hookrightarrow I_P^G(\sigma\otimes\chi|_y)$ for some  character $\chi|_y$.  We expect this property to hold in a reasonable generality. In fact,  if  $G$ is a product of general linear groups, the discrete support of $\pi$ is rigid around each $x\in\Sigma$  for all the $R[G(F)]$-modules  $\pi$
 in consideration (see  Proposition \ref{GL(n)}) and for general  $G$, the discrete support of $\pi$ is rigid around $x\in\Sigma$ if $\pi|_x=I_P^G\sigma$ with $\sigma$  regular supercuspidal  (see Proposition \ref{quasi-split}).

We record an immediate corollary, which studies \dfn{distinguished} representations  in  an unramified twisting family. 
Let $P=MN\subset G$ be a parabolic subgroup with Levi factor M and  denote by $X$
the torus of unramified complex characters  on $M(F)$.

\begin{cor}Let $Y=H\bs G$ be a homogeneous spherical $G$-variety with $G$ split. 
Assume $Y$ is strongly tempered without type $N$-roots and wavefront. 
Let $\sigma$ be a  discrete series (complex) representation on $M(F)$ and assume the parabolic induction 
 $ I_P^G \sigma$ is irreducible and  
 distinguished, i.e. $m(I_P^G\sigma) := \dim \Hom_{H(F)}(I_P^G\sigma,\BC)\neq0$.
Then all the representations $\pi|_x = I_P^G \sigma \otimes \chi|_x$ (not necessarily irreducible), $x \in X$ in the unramified twisting family are distinguished. Here for $x\in X$, $\chi|_x$ is the unramified character corresponding to $x$.
 \end{cor}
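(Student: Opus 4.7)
The plan is to derive the corollary from the meromorphy part of Theorem~\ref{main} applied to the tautological unramified twisting family, combined with an upper semi-continuity argument.

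\emph{Setup.} I would let $R = \CO(X)$ and denote by $\chi_{\univ}: M(F) \to R^{\times}$ the universal unramified character (so that its specialization at $x \in X$ recovers $\chi|_x$). Form the $R[G(F)]$-module $\pi := I_P^G(\sigma \otimes \chi_{\univ})$ together with its natural contragredient family $\tilde{\pi} := I_P^G(\sigma^{\vee} \otimes \chi_{\univ}^{-1})$ and the canonical $G(F)$-invariant $R$-bilinear pairing between them. Take $\Sigma \subset X$ to be the Zariski dense subset of closed points $x$ at which $\chi|_x$ is unitary and $\pi|_x$ is irreducible; for such $x$, $\pi|_x$ is tempered since $\sigma$ is discrete series. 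The discrete support of $\pi$ is rigid around each $x \in \Sigma$ essentially tautologically, since each $\pi|_y$ embeds into $I_P^G(\sigma \otimes \chi|_y)$ with the same $P$ and the same discrete series $\sigma$, only the unramified character $\chi|_y$ varying regularly in $y$. Theorem~\ref{main} then yields a bi-$H(F)$-invariant $R$-linear pairing $\alpha_\pi: \pi \times \tilde{\pi} \to R$ defined on all of $X$, specializing at each $x \in \Sigma$ to the canonical period $\alpha_{\pi|_x}$.

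\emph{Nonvanishing at the distinguished point.} Next I would verify that $\alpha_\pi|_{x_0} \neq 0$ for the base point $x_0 \in X$ with $\chi|_{x_0}$ trivial. At $x_0$, $\pi|_{x_0} = I_P^G \sigma$ is irreducible, tempered and distinguished, so $x_0 \in \Sigma$. For strongly tempered $Y$ without type $N$-roots and wavefront, the Sakellaridis--Venkatesh Plancherel decomposition of $L^2(Y)$ implies that the canonical period of a tempered distinguished representation --- given by the absolutely convergent integral of matrix coefficients --- is a nonzero generator of the one-dimensional Hom space $\Hom_{H(F) \times H(F)}(\pi|_{x_0} \boxtimes \tilde{\pi}|_{x_0}, \BC)$. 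Hence $\alpha_\pi|_{x_0} \neq 0$, and consequently there exist $\varphi \in \pi$ and $\tilde{\varphi} \in \tilde{\pi}$ with $\alpha_\pi(\varphi, \tilde{\varphi}) \in R$ nonzero at $x_0$; its nonvanishing locus is a Zariski dense open subset $U \subset X$ on which $\alpha_\pi|_x \neq 0$, and thus $m(\pi|_x) \geq 1$ for every $x \in U$.

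\emph{Propagation and main obstacle.} To extend distinguishedness from $U$ to all of $X$, I would invoke upper semi-continuity of $x \mapsto m(\pi|_x) = \dim_{k(x)} \Hom_{H(F)}(\pi|_x, k(x))$: the locus $\{x \in X : m(\pi|_x) \geq 1\}$ is Zariski closed in $X$. Since this closed set contains the Zariski dense open subset $U$ and $X$ is irreducible, it must coincide with $X$, yielding $m(\pi|_x) \geq 1$ for every $x \in X$. The hard part will be justifying this upper semi-continuity when $\pi$ is not finitely generated as an $R$-module; this should reduce, via the admissibility decomposition $\pi = \bigcup_K \pi^K$ into finitely generated $R$-modules $\pi^K$ and the classical upper semi-continuity of Hom for finitely generated modules, to the observation that on a spherical variety $Y$ the $H(F)$-invariants on $\pi^K$ are controlled by only finitely many $H(F) \cap K$-orbits on the open stratum of $Y(F)$.
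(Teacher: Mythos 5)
Your overall strategy matches the paper's: realize the unramified twisting family as an $R[G(F)]$-module satisfying the hypotheses of Conjecture~\ref{conj-2}, obtain the meromorphic family of canonical periods from Theorem~\ref{main}, show nonvanishing at the distinguished base point $x_0$, deduce $m(\pi|_x)\geq 1$ on a Zariski dense open subset, and finish by upper semi-continuity of $x\mapsto m(\pi|_x)$. The paper's treatment of the first steps is a little more concrete (it fixes a good maximal compact $K$ with $G(F)=P(F)K$ and realizes every $\pi|_x$ on the fixed space $I_{K\cap P}^K\sigma$, from which the regularity of $x\mapsto\langle\pi|_x(g)v_1,v_2\rangle$ and the non-degeneracy of the pairing are immediate), and it only needs the open-dense-subset branch of Theorem~\ref{main} rather than the full meromorphy on all of $X$ via rigidity of discrete support; your invocation of rigidity is correct but superfluous, since the closure is recovered anyway by the semi-continuity argument.

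The genuine gap is precisely the step you call the hard part. The paper does not prove the upper semi-continuity of $m(\pi|_x)$; it cites the result from Appendix~D of Feigon--Lapid--Offen \cite{FLO}. Your proposed reduction does not work as stated: $\Hom_{H(F)}(\pi|_x,k(x))$ is the space of $H(F)$-invariant linear functionals on the entire (infinite-dimensional) smooth representation $\pi|_x$, and $H(F)$-invariance is a global condition that is not detected on a single $K$-fixed level. In particular, ``$H(F)$-invariants on $\pi^K$'' is not a meaningful object (the finite-dimensional module $\pi^K$ carries no $H(F)$-action), and a functional on $\pi^K$ invariant under $H(F)\cap K$-orbits has no reason to extend to an $H(F)$-invariant functional on $\pi$, nor need an $H(F)$-invariant functional be determined by its restriction to one level. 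The FLO argument is of a different nature (it goes through Frobenius reciprocity, second adjunction, and geometric finiteness properties of spherical varieties to reduce to the classical semi-continuity of dimensions for a coherent family), and without importing that result your final propagation step is unsupported.
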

\begin{proof}Fix a nonzero invariant pairing $\langle \cdot,\cdot \rangle_\sigma$ on	$\sigma \boxtimes \sigma^\vee$ and choose  a good maximal open compact subgroup $K\subset G(F)$ such that $G(F) = P(F)K$. Then by  the identifications 
\[\pi|_x  \stackrel{\sim}{\lra} V := I_{K \cap P}^K \sigma\ (\mathrm{resp.}\ (\pi|_x)^\vee  \stackrel{\sim}{\lra} V^\vee := I_{K \cap P}^K \sigma^\vee), \quad f \mapsto f|_K\]
and \cite[Proposition 3.1.3]{Cas95}, 
the bi-linear pairing
\[V\times V^\vee\to\BC;\quad  \langle v_1,v_2 \rangle = \int_K \langle v_1(k), v_2(k) \rangle_\sigma dk\]
induces a non-degenerate invariant pairing on $\pi|_x\times (\pi|_x)^\vee$ for each $x\in X$. 
Moreover, for any $v_1 \in V$, $v_2  \in V^\vee$ and any $g \in G$, the function 
   \[x \mapsto \langle \pi|_x(g)v_1,v_2 \rangle\]
 is regular. In particular,  the family $\pi|_x$ satisfies the assumptions in Conjecture \ref{conj-2}. 
 
 By \cite[Theorem 6.4.1]{SV},  $\alpha_{I_P^G\sigma}\neq0$ since $I_P^G\sigma$ is distinguished. By Theorem \ref{main}, the canonical local period is meromorphic and thus  $\alpha_{\pi|_x)} \not= 0$
	for $x$ in an open subset of $X$. In this open subset,  $m(\pi|_x)\geq1$ and consequently by  the upper-semicontinuity of $m(\pi|_x)$ established in \cite[Appendix D]{FLO}, one deduces $m(\pi|_x) \geq 1$ for all $x\in X$.
\end{proof}

\begin{remark}
   In \cite{BD08}, for symmetric pairs $(G,H)$, meromorphic families of $H$-distinguished linear functionals
   over complex torus are constructed. By generalizing this result to reduced Noetherian coefficient rings and
   comparing the square of these linear functionals with the canonical local periods, one may obtain the meromorphy
   of canonical period integrals (Conjectures \ref{Rationality Q} and \ref{local-conj}) for symmetric pairs.
   We plan to consider this problem in another paper.
\end{remark}

\begin{remark}
Attached to the canonical local period $\alpha_\pi$, there is the spherical character
$$J_\pi(f):=\sum_i \alpha_\pi(\pi(f)\varphi_i\otimes\varphi^i),\quad f\in\CS(G(F),\BC)$$
on the Hecke algebra $\CS(G(F),\BC)$ of complex-valued compactly supported functions on $G(F)$.
Here $\{\varphi_i\}$ and $\{\varphi^i\}$ are arbitrary dual basis of $\pi$ and $\pi^\vee$
with respect to $\langle\cdot,\cdot\rangle$. Sometimes (e.g. in the relative trace formula framework),
it is more convenient to consider the spherical characters.

In an old version of this paper, we also formulate parallel conjectures
for spherical characters (See \cite[Conjecture 3.12 and Conjecture 3.13]{CF21}).
However, we don't know whether the two versions (local periods/spherical characters) are equivalent.
Under the condition that {\em the fibre ranks of representations are locally constant},
the conjecture for canonical local periods implies that for spherical characters (See \cite[Lemma 3.14]{CF21}).

In that version, we obtain the meromorphy property for spherical characters
in the Gan-Gross-Prasad unitary group case
(See \cite[Theorem 5.8]{CF21}) using a completely different method.
We apply the spherical character identity of Beuzart-Plessis to relate the
spherical character  on unitary groups to the
Rankin-Selberg and Asai zeta integrals. The meromorphy property follows from that of Rankin-Selberg and
Asai zeta integrals.
\end{remark}

Before explaining the strategy to Theorem \ref{main}, we  explain our global motivation.

Let  $F$ be a number field and  $H\subset G$ be a spherical pair of reductive groups over $F$
satisfying certain conditions (e.g. the multiplicity one property).
For any  cuspidal unitary automorphic representation $\pi$
 of $G(\BA)$,   Sakellaridis-Venkatesh \cite{SV} conjectured that the global period
 \[P_H(\varphi) = \int_{H(F) \bs H(\BA)} \varphi(h) dh, \quad \varphi \in \pi\]
  has a  conjectural Eulerian decomposition into products of local periods
\label{Global conj}
\[\tag{$*$} P_H(\varphi_1)P_H(\varphi_2)=
L_Y^{(S)}(\pi) \prod_{v \in S} \alpha_{\pi_v}\left( \varphi_{1,v}\otimes\varphi_{2,v}
\right), \quad \varphi_1 \in \pi,
\varphi_2 \in \pi^\vee\]
where $L_Y^{(S)}(\pi)$ is certain $L$-value associated to $Y:=H\backslash G$ and $\pi$. Examples of
such decomposition include
the Ichino formula for the triple product case
 (see \cite{Ich08}) and the Ichino-Ikeda conjecture  for the unitary Gan-Gross-Prasad case (proved in \cite{Iso,BCZ} recently).

 The conjectural decomposition $(*)$ can be used to  construct $p$-adic $L$-functions. Using geometric methods, one can study the variation of (proper modification of) the global periods $P_H(\varphi)$ 
 when $\pi$ varies in a $p$-adic family. Instead of choosing the test vector $\varphi$  carefully (so that the local period can be computed explicitly) and then viewing the period integral $P_H(\varphi)$ as the $p$-adic $L$-function (see \cite{BDP13} for the Waldspurger toric case, \cite{Hsi17} for the triple case, 
 \cite{Harris} for the Gan-Gross-Prasad case), we prefer to apply the meromorphy property of canonical local periods at places $v\nmid p$ to view  the ratio
 \[ \frac{P_H(\varphi_1)P_H(\varphi_2)}{
 	  \prod_{v \in S, v \nmid p}
 	\alpha_{\pi_v}\left( \varphi_{1,v} \otimes \varphi_{2,v} \right)},
 \quad \varphi_1 \in \pi, \varphi_2 \in \pi^\vee,\]
which is independent of the choice of test vectors, as the candidate for the conjectural  $p$-adic $L$-function $\CL_{Y}^{(S)}(\pi)$ of the $p$-adic family $\pi$.  In the Waldspurger toric period case, Liu-Zhang-Zhang \cite{LZZ} implemented this strategy to
construct a $p$-adic $L$-function and established a special value formula of this $p$-adic $L$-function.

In some cases (including the triple product and the Gan-Gross-Prasad case),
the conjectural decomposition $(*)$ has an arithmetic counterpart, which
relates the Beilinson-Bloch height pairing of certain cycles to the product of certain derivative $L$-values and the same local periods. In a similar vein, this arithmetic formula can be used to study  the  derivative special values of $p$-adic $L$-functions of different nature. For results in this direction, see  \cite{Dis19}  for the $p$-adic Gross-Zagier formula.


\subsection{Sketch of the proof} Now we return to the proof of Theorem \ref{main}.
Let $\pi$ be a finitely generated  smooth admissible  torsion-free $R[G(F)]$-module as in  Conjecture \ref{conj-2}. In the same spirit as \cite{Mos16, Mos16G, Gir21},
the basic strategy to prove Theorem \ref{main} is reducing the
meromorphy of $\alpha_\pi$ to showing certain formal  power series are actually rational functions. For this, we study the asymptotic
behavior of the matrix coefficients of $\pi$ along the spherical subgroup $H$. Two key ingredients in the proof are
\begin{itemize}
	\item  Admissibility of Jacquet modules (\cite[Corollary 1.5]{DHK22}, see also Theorem \ref{admjac}).
		For any parabolic subgroup $P=MN\subset G$ with Levi factor $M$, the Jacquet module
		$$J_N(\pi):=\pi/ \pi(N),\quad \pi(N):=\langle\pi(n)v-v|v\in\pi, n\in N(F)\rangle\subset \pi$$
                is a smooth admissible $R[M(F)]$-module.
	\item  Asymptotic behavior of matrix coefficients (Corollary \ref{Asym}). This is a consequence
		of Casselman's canonical pairing theorem for $G(F)$-representations (recorded as Theorem \ref{cano}).  Fix a minimal parabolic subgroup
		$P_{\emptyset_G}=M_{\emptyset_G} N_{\emptyset_G}\subset G$ with $A_{\emptyset_G}\subset M_{\emptyset_G}$
		a maximal split torus. Let $\Delta_G$ be the set of simple $G$-roots with respect to $(P_{\emptyset_G},A_{\emptyset_G})$. 
        Then for any $\Theta_G\subset \Delta_G$,  any $v\in\pi(N_{\Theta_G})$ and any $\wt{v} \in \wt{\pi}$,
		there exists $\epsilon>0$ such that  for any $a\in A_{\emptyset_G}^-$ with
		$|\alpha(a)|<\epsilon$ for all $\alpha\in \Delta_G-\Theta_G$, $\langle \pi(a)v,\wt{v} \rangle=0$ where 	$$A_{\emptyset_G}^-:=\{a\in A_{\emptyset_G}(F)|\ |\alpha(a)|\leq 1\ \forall\ \alpha\in \Delta_G\}.$$
\end{itemize}
We  now take the triple product case and the  $(\GL_3 \times \GL_2,\GL_2)$-case to explain the ideas.

We first consider the (split) triple product case: $H:=\BG_m\backslash \GL_2 \subset
G:=\BG_m\backslash \GL_2^3$. Here, $H$ embeds into $G$ diagonally. 
Note that for any $x \in \Sigma$, by the Cartan decomposition  (Theorem \ref{Cartan})
$$\GL_2(F)=\bigsqcup_{a\geq b}Kt(a,b) K, \quad K = \GL_2(\CO_F),\quad t(a,b) = \matrixx{\varpi^a}{0}{0}{\varpi^b}  $$
the canonical local period
\[\alpha_{\pi|_x}(v,\wt{v}) = \sum_{a \geq 0} \Vol(Kt(a,0)K)
\iint_{K \times K} \langle \pi|_x(k_1t(a,0) k_2) v,\wt{v} \rangle|_x dk_1dk_2.\]
with  $$\Vol(Kt(a,0)K)=\begin{cases} (1+q^{-1}) q^{a} &\ a>0\\ 1 &\ a=0.\end{cases}$$   
Here, we assume $\Vol(K) = 1$ and $q$ is the cardinality of the residue field of $\CO_F$. 

It suffices to show that  for any $v\in \pi$ and $\wt{v} \in \wt{\pi}$, the formal sum 
$$\wt{F}_{\emptyset_H,v,\wt{v}}(T):=\sum_{a\geq 1}\langle \pi(t(a,0))v, \wt{v} \rangle T^a$$
is actually  rational over $R$ and then evaluate at $T=q$.

Identify the set of simple roots $\Delta_G$ with $\Delta_{\GL_2}\sqcup \Delta_{\GL_2} \sqcup \Delta_{\GL_2}$, where  $\Delta_{\GL_2}$  consists of
$$\alpha:\ A_2(F)=(F^\times)^2\to F^\times,\quad (t_1,t_2)\mapsto t_1/t_2.$$
Here $A_n\subset \GL_n$ is the diagonal torus. Consider $\Theta_G=\emptyset_G$ in the above theorem for asymptotic behavior of matrix coefficients.
Since $ A_{\emptyset_G} =\Delta\BG_m\bs A_2\times A_2 \times A_2$, one finds that $\wt{F}_{\emptyset_H,v,\wt{v}}(T)$ is a polynomial for  $v\in \pi(N_{\emptyset_G})$.
Moreover,
 $$\pi(t(1,0)) \in \End_{R[A_{\emptyset_G}]}J_{N_{\emptyset_G}}(\pi) \hookrightarrow \End_R J_{N_{\emptyset_G}}(\pi)^{K_{A_{\emptyset_G}}}$$
where $K_{A_{\emptyset_G}}$ is any open compact subgroup of $A_{\emptyset_G}(F)$ such that $J_{N_{\emptyset_G}}(\pi)^{K_{A_{\emptyset_G}}}$ containing a (finite) system of generators of $J_{N_{\emptyset_G}}(\pi)$.
By the admissibility of Jacquet modules, $J_{N_{\emptyset_G}}(\pi)^{K_{A_{\emptyset_G}}}$ is finitely generated over $R$.
Thus  there exists a non-zero polynomial $P(X)=\sum_{n\geq0}c_nX^n\in R[X]$
such that for any $v\in \pi$, $P(\pi(t(1,0)))v\in \pi(N_{\emptyset_G})$. Since
$$ \wt{F}_{\emptyset_H,P(\pi(t(1,0)))v,\wt{v}}(T)=P(T^{-1})\wt{F}_{\emptyset_H,v,\wt{v}}(T)-\sum_{n\geq1}\sum_{a=1}^nc_n\langle
\pi(t(a,0))v,\wt{v}\rangle T^{a-n}\in R[T],$$
one immediately deduces $F_{\emptyset_H,v,\wt{v}}(T)$ is a rational function of the form $\frac{Q(T)}{P(T^{-1})}$ with $Q(T)\in R[T]$.

 At each point $x\in\Sigma$, the evaluation $\wt{F}_{\emptyset_H,v,\wt{v}}|_x(q)$ of $\wt{F}_{\emptyset_H,v,\wt{v}}|_x$ at $T=q$ gives the desired value, namely  the absolutely convergent sum $$\sum_{a>0}\langle \pi(t(a,0))v,\wt{v}\rangle q^a,$$
by the absolutely convergence of $\alpha_{\pi|_x}(v,\wt{v})$. When the discrete support of $\pi$ is rigid at $x\in\Sigma$, we explicitly construct the polynomial $P(T)$ and then deduce $P|_x(q^{-1})\neq0$ from the absolutely convergence of $\alpha_{\pi|_x}(v,\wt{v})$. Consequently, 
  $\wt{F}_{\emptyset_H,v,\wt{v}}(q)|_x=\wt{F}_{\emptyset_H,v,\wt{v}}|_x(q)$ and the meromorphy conjecture for $\pi$ holds.

Now we turn to the  rank two case
$$H=\GL_2\hookrightarrow G:=\GL_3\times \GL_2,\quad g\mapsto \left(\begin{pmatrix} g & 0\\ 0 & 1\end{pmatrix}, g\right).$$
Again by the  Cartan decomposition and the volume formula, it suffices to show that for any $v\in \pi$ and $\wt{v}\in \wt{\pi}$,
the formal power series
\[\wt{F}_{\emptyset_H,v,\wt{v}}^{\pm}(T_1,T_2):=\sum_{a > b, \pm b>0} \langle \pi(t(a,b))v,\wt{v} \rangle T_1^{a-b} T_2^{|b|},\quad\quad \wt{F}_{\Delta_H,v,\wt{v}}^{\pm}(T_2):=\sum_{\pm a>0} \langle \pi(t(a,a))v,\wt{v} \rangle T_2^{|a|} \]
\[\wt{F}_{\emptyset_H,v,\wt{v}}^{0}(T_1):=\sum_{a > 0} \langle \pi(t(a,0))v,\wt{v} \rangle T_1^{a} ,\quad\quad \wt{F}_{\Delta_H,v,\wt{v}}^{0}:=\langle v, \wt{v}\rangle \]
are all rational over $R$ and  then sum up the evaluations at $T_1=q$ and $T_2=1$ . Here $\tilde{F}^\pm_{\emptyset_H,v,\wt{v}}$ deals with summations over the  ``2-dimensional cones'' $\boxed{a>b, \pm b>0}$, while 
$\tilde{F}^0_{\emptyset_H,v,\wt{v}}$ and $\wt{F}_{\Delta_H,v,\wt{v}}^{\pm}$ deal with the ``1-dimensional cones'' 
$\boxed{a>b, b=0}$ and  $\boxed{a=b,\pm b>0}$.

The evaluation part is similar to the triple case. For the rationality part, we proceed by induction.  We reduce the rationality of the series over ``2-dimensional cones'' to
that of the series over ``1-dimensional cones'' which can be handled similarly as the triple product case.

Identify $\Delta_G$ with
$\Delta_{\GL_3}\sqcup \Delta_{\GL_2}$ where $\Delta_{\GL_3}=\{\beta_1,\beta_2\}$ with
$$\beta_i:\ A_3(F)=(F^\times)^3\to F^\times;\quad (t_1,t_2,t_3)\mapsto t_i/t_{i+1}.$$
Consider the subset $\Theta_G = \{\beta_1,\alpha\}$ of $\Delta_G$ and
$\pi(t(1,1))\in \End_{R[M_{\Theta_G}(F)]}J_{N_{\Theta_G}}(\pi)$. Then by the aforementioned asymptotic behavior of matrix coefficients,  the rationality of
$\wt{F}^+_{\emptyset_H,v,\wt{v}}$ reduces to that of $\wt{F}^0_{\emptyset_H,v,\wt{v}}$. And the rationality of $\wt{F}^0_{\emptyset_H,v,\wt{v}}$ can be
obtained by considering the subset $\{\beta_2,\alpha\}$ and the endomorphism induced by $\pi(t(1,0))$.

To deal with $\wt{F}^-_{\emptyset_H,v,\wt{v}}$, we furthur decompose it  into formal series over the ``2-dimensional cones'' 
$ \boxed{a\geq 0>b}$ and $\boxed{0>a>b}$.
Then by considering the action of the Weyl group of $G$,
one can reduce the rationality of the series over $\boxed{a\geq 0>b}$ and $\boxed{0>a>b}$ to that of $\wt{F}^+_{\emptyset_H,v,\wt{v}}$.

The strategy  is available for  very general situations. We introduce the  notion of \dfn{reduction structures} in Definition \ref{controllable} which abstracts the geometric properties validating
the reduction process above. Then we show the meromorphic property of $\alpha_\pi$ holds for all  strongly tempered spherical $G$-varieties admitting  reduction structures in Theorem \ref{contro}. Finally in Theorem \ref{crucial}, we prove  all split strongly tempered spherical varieties without type $N$-roots admit reduction structures  case by case using the classification result (listed in \cite[Section 1.1]{WZ21}).

\begin{remark}Actually, inner forms and low rank variations of split strongly tempered spherical $G$-varieties without type $N$-roots   also admit reduction structures. See Proposition
\ref{ggp-unitary} for the  Gan-Gross-Prasad case for unitary groups.
\end{remark}
\s{\bf Acknowledgement} We express our sincere gratitude to Prof. Y. Tian for his consistent encouragement, and to Prof. A. Burungale for carefully reading earlier versions of the article. We are grateful to the anonymous referee whose comments helped us improve the article. In particular, we thank the referee for pointing out a  gap in an earlier version and Prof. G. Moss for generously suggesting Proposition \ref{quasi-split} to fix it.  We thank Prof.  J.-F. Dat for kindly answering our question on  Jacquet modules and thank Prof. J. Yang for discussions on  global applications.

L. Cai is partially supported by NSFC grant No.11971254.

\section{Notation and conventions}
In the rest of this paper,
\begin{itemize}
	\item  $F$ is a finite extension of $\BQ_p$ with ring of integers $\CO_F$,  uniformizer $\varpi$,  residue field $k$ and normalized norm  $$|\cdot|:\ F\to (\sharp k)^{\BZ}\cup\{0\};\quad \varpi\mapsto q^{-1},\quad q=\sharp k.$$
	\item  $E$ is a  field  embeddable into $\BC$ and $\bar{E}$ is an algebraic closure of $E$,
	\item  $X/E$ is a locally of finite type reduced  scheme and for any $x\in X$,  $k(x)$ is the residue field $\CO_{X,x}/m_x\CO_{X,x}$,
	\item  $\Sigma\subset X$ is a Zariski dense subset of closed points.
\end{itemize}
\subsection{Fibres of quasi-coherent sheaves}\label{fibers}
Let $\CF$ be a quasi-coherent sheaf on $X$. A non-zero section  $s\in \CF(X)$ is {\em torsion} if there exists an open affine subset $U=\Spec(R)\subset X$ such that $\Ann_R(s|_U)$ contains non-zero divisors. The sheaf  $\CF$ is {\em torsion-free} if $\CF(U)$ contains no torsion elements  for any open affine subset  $U\subset X$.

For any $x\in X$, the fiber $\CF{|_x}$ of $\CF$ at  $x$ is the $k(x)$-vector space $\CF_x \otimes_{\CO_{X,x}} k(x)$. For any $s\in \CF(X)$, let $s(x)\in \CF|_x$ be the image of $s$ under the natural map  $\CF(X)\to \CF_x\to\CF{|_{x}}$.  When  $\CF$ is coherent,
\begin{itemize}
	\item the function $$\phi_{\CF}:\ X\to\BN,\quad x\mapsto \dim_{k(x)}\CF{\mid_x}$$ is upper semi-continuous, i.e. $\{x\in X\mid \phi_{\CF}(x)\leq n\}$ is open for any $n\in \BN$ (The upper semi-continuous theorem, see \cite[Example 12.7.2]{Har77}).
	\item if $\phi_{\CF}$ is locally constant,  $\CF$ is finite projective (note that $X$ is reduced, see \cite[\href{https://stacks.math.columbia.edu/tag/0FWG}{Lemma 0FWG}]{SP}),
	\item the {\em zero locus} $\{x\in X|s(x)=0\}\subset X$ of a non-zero section $s\in\CF(X)$ is constructible by Chevalley theorem and hence $s$ is torsion if the zero locus contains a Zariski dense subset.
\end{itemize}

Let  $\CF^\star\subset \CF$ be an subsheaf. For any section $s\in\CF^\star(X)$, let $s_\CF(x)\in \CF|_x$  be the image of $s$ under the natural map  $\CF^\star(X)\hookrightarrow\CF(X)\to \CF_x\to\CF{|_{x}}$ and let $\CF^\star|_{\CF,x}\subset\CF|_x$ be the subset consisting of all $s_\CF(x)$. Note that when $\CF^\star\subset \CF$ is a $\CO_X$-submodule, then $\CF^\star|_{\CF,x}$ (resp. $s_\CF(x)$) is the image of  $\CF^\star|_x$ (resp. $s(x)$) via the natural map
$\CF^\star|_x\to\CF|_x$. When no confusion arises, we will omit $\CF$ in the notations.

 We introduce the following notations. Let  $\CK_X$ be the sheaf of total quotient rings of $\CO_X$. Note that  (see \cite[\href{https://stacks.math.columbia.edu/tag/02OW}{Lemma 02OW}]{SP}) for any $x\in X$, $\CK_{X,x}$ is the total quotient ring of $\CO_{X,x}$.
\begin{itemize}
	\item A family $\{f_x \in \CF^\star|_x\}_{x\in \Sigma}$ is called  {\em meromorphic} if there exists an open subset $X^\prime\subset X$ containing $\Sigma$ and a section $F\in \CF^\star(X^\prime)$ such that
	for any $x\in \Sigma$, $F(x)=f_x$.
	When $\CF$ is  torsion-free, the section $F$ is unique in the sense  that for any open subset $X^{\prime\prime}$ containing $\Sigma$ and any section $G\in \CF^\star(X^{\prime\prime})$ such that $G(x)=f_x$,
	$F\mid_{X^\prime\cap X^{\prime\prime}}=G\mid_{X^\prime\cap X^{\prime\prime}}.$
	\item    A family $\{f_x \in \Map(\CF^\star|_x,k(x)) \}_{x\in\Sigma}$
	is called {\em meromorphic} if there exists an open subset $X^\prime\subset X$  containing $\Sigma$ and a sheaf morphism $F:\ \CF^\star|_{X^\prime}\to\CK_{X^\prime}$ such that for any $x \in \Sigma$, $F_x(\CF^\star_x)\subset \CO_{X,x}$ and the following diagram commutes
	\[\xymatrix{
		\CF^\star_{x} \ar[d] \ar[r]^{F_x} & \CO_{X,x} \ar[d] \\
		\CF^\star|_x  \ar[r]^{f_x} & k(x)   }\]
	Such a morphism $F$ is unique and extends to $X$ by
	$$\CF^\star(X)\to\CF^\star(X^\prime)\xrightarrow{F} \CK_{X^\prime}(X^\prime)\cong \CK_X(X).$$
\end{itemize}
In both cases, we say $F$ {\em interpolates} $f_x$.

 For any $\CO_X^\times$-subsheaf $\CF^\star\subset \CF$, let $\CF^{\star,-1}$ denote the sheaf $\CF^\star$
with inverted $\CO_X^\times$-action
\[c\cdot f:= c^{-1}f, \quad c \in \CO_X^\times, \
f\in \CF^\star.\] Let  $Q_{\CF^\star}(\CF)$  denote the sheaf $\CF\times\CF^{\star,-1}$ and let $\frac{f}{g}\in Q_{\CF^\star}(\CF)(X)$   denote the section  $(f,g)\in Q_{\CF^\star}(\CF)(X)$.   We will omit  $\CF^\star$ in notations when no confusion arises.
\subsection{Measures on  $G$-varieties}
Let $G$ be a reductive group over $F$. A  $G$-variety $Z$ over $F$ is a geometrically integral and separated $F$-scheme of finite type together with an $F$-algebraic $G$-action $G\times Z\to Z$.

Assume  $Z$ is a smooth $n$-dimensional $G$-variety over $F$ equipped with a $G$-equivariant $F$-rational  differential $n$-form $\omega$. Cover $Z(F)$
by local charts $(U,\quad U\xrightarrow[\cong]{i_U} \CO_F^n)$. Note that the standard   differential $n$-form $dx_1\wedge\cdots\wedge dx_n$ on $F^n$ defines a Haar meausre $dx_1\cdots dx_n$ on $F^n$ such that $\Vol(\CO_F^n)=1$. Assume $$i_{U,*}(\omega|_U)=f(x_1,\cdots,x_n)dx_1\wedge\cdots \wedge dx_n.$$ Then  the measure $i_U^*(|f(x_1,\cdots,x_n)|_Fdx_1\cdots dx_n)$  on $U$  glues to a $G(F)$-equivariant measure $|\omega|$ on $Z(F)$.  Note that for any $\BQ$-algebra $A$,  $\int_{Z(F)}f|\omega|\in A$ for any $f\in\CS(Z(F),A)$,  the space of $A$-valued compact supported locally constant functions on $Z(F)$.

Let $H\subset G$ be a closed $F$-subgroup. By \cite[Section 3.8]{Sak08}, there exists a $G$-equivariant top degree differential form on the geometric quotient $H\backslash G$ if and only if the modulus characters of $G$ and of $H$ coincide  on $H$. In particular, when $H$ is reductive,  the above construction is applicable to $H\backslash G(F)$ and we will denote the resulting measure by $d\mu_{H\backslash G}$  when the top differential is insignificant.

\subsection{Smooth admissible modules}\label{sec-modules}
Let $G$ be a reductive group over $F$. For any Noetherian ring $R$, a  $R$-module $M$ equipped with a $R$-linear action of $G(F)$ is called
\begin{itemize}
	\item {\em smooth} if  any $v \in M$ is fixed by some open compact subgroup of $G(F)$;
	\item {\em admissible} if for any compact open subgroup $K \subset G(F)$, the submodule
	$M^K\subset M$ of $K$-fixed elements is  finitely generated  over $R$;
	\item {\em finitely generated} if $M$ is finitely generated as a $R[G(F)]$-module;
\item {\em torsion-free} if $M$ is torsion-free as a $R$-module.
\end{itemize}

A quasi-coherent $\CO_X$-module $\pi$ equipped with
a group morphism $G(F) \ra \Aut_{\CO_X}(\pi)$ is called an {\em $\CO_X[G(F)]$-module}.  It is moreover called  {\em smooth/admissible/finitely generated/torsion-free} if for any $x\in X$, there exists an open affine neighborhood $x\in U=\Spec(R)\subset X$ such that $\pi(U)$ is smooth/admissible/finitely generated/torsion-free as $R[G(F)]$-module. Note that the fiber $\pi|_x$ is a smooth/admissible/finitely generated $k(x)[G(F)]$-module for any $x\in X$ if so is $\pi$.

 When $X=\Spec(C)$ for some field extension $C/E$, a smooth $C[G(F)]$-module $\pi$ is called {\em irreducible} (resp. {\em absolutely irreducible}) if it  (resp. its base change to any (hence all) algebraically closed field) contains no proper non-zero submodule. Unless otherwise specified, all irreducible representation in this paper would be {\em non-zero}.

\begin{lem}\label{irre} Assume $X=\Spec(R)$ and let $\fp\subset R$ be a minimal prime with corresponding  generic point $\eta$. Let $\pi$ be a   finitely generated   smooth admissible torsion-free $R[G(F)]$-module.  Then if $\pi|_x$ is irreducible (resp. absolutely irreducible) for all $x\in\Sigma$,   $\pi|_\eta$ is irreducible (resp. absolutely irreducible). 
\end{lem}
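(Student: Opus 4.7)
The plan is to argue by contradiction: assuming $\pi|_\eta$ has a proper non-zero $k(\eta)[G(F)]$-submodule $N$, I will produce a closed point $x\in\Sigma$ at which $\pi|_x$ is reducible. First I would verify the geometric input that $\Sigma\cap V(\fp)$ is Zariski dense in the irreducible component $V(\fp):=\Spec(R/\fp)$: writing $X=V(\fp)\cup\bigcup_{\fq\ne\fp}V(\fq)$ and using $\overline{\Sigma}=X$ together with the irreducibility of $V(\fp)$ forces $V(\fp)\subset\overline{\Sigma\cap V(\fp)}$. I would also note that since $\fp$ is a minimal prime of the reduced Noetherian ring $R$, the localization $R_\fp$ is a reduced local ring with nilpotent maximal ideal, hence a field equal to $k(\eta)$, so $\pi|_\eta=\pi_\fp$ is the localization.

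Next I would spread the hypothetical submodule $N$ out to a sub-$R[G(F)]$-module of $\pi$. Since $\pi|_\eta$ is smooth and $0\ne N\subsetneq\pi|_\eta$, I can choose an open compact subgroup $K\subset G(F)$ with $N^K\ne 0$ and $N^K\ne(\pi|_\eta)^K$. As $R$ is a $\BQ$-algebra, the Hecke projector $e_K$ gives $(\pi|_\eta)^K=\pi^K\otimes_R k(\eta)$; clearing denominators in $R\setminus\fp$ I can therefore lift to $v_1,v_2\in\pi^K$ with $v_1|_\eta\in N^K\setminus\{0\}$ and $v_2|_\eta\in(\pi|_\eta)^K\setminus N^K$. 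Let $\widetilde W=R[G(F)]\cdot v_1\subset\pi$. Then $\widetilde W^K=\widetilde W\cap\pi^K$ is finitely generated over $R$ because $R$ is Noetherian and $\pi^K$ is finitely generated by admissibility.

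The image of $\widetilde W\otimes_R k(\eta)\to\pi|_\eta$ is the $k(\eta)[G(F)]$-submodule generated by $v_1|_\eta$, which lies in $N$ and so misses $v_2|_\eta$; an application of $e_K$ translates this into the statement that $\bar v_2\in\pi^K/\widetilde W^K$ is non-zero at $\eta$. Since $\pi^K/\widetilde W^K$ is finitely generated over $R$, its zero locus $Z(\bar v_2)\subset X$ is constructible by Chevalley (recorded in Section~\ref{fibers}) and does not contain $\eta$; as a constructible subset of the irreducible $V(\fp)$ not containing its generic point, $Z(\bar v_2)\cap V(\fp)$ is contained in a proper closed subset $Y\subsetneq V(\fp)$. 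The same argument applied to $v_1\in\pi^K$ (which has $v_1|_\eta\ne 0$) gives a proper closed $Y'\subsetneq V(\fp)$ outside which $v_1|_x\ne 0$. Density of $\Sigma\cap V(\fp)$ in $V(\fp)$ lets me pick $x\in\Sigma\cap V(\fp)\setminus(Y\cup Y')$: at such $x$ the image of $\widetilde W\otimes_R k(x)\to\pi|_x$ is a $k(x)[G(F)]$-submodule containing $v_1|_x\ne 0$ and missing $v_2|_x$, contradicting the irreducibility of $\pi|_x$.

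For the parenthetical absolutely irreducible case I would reduce to the above by base change. Reducibility of $\pi|_\eta\otimes_{k(\eta)}\overline{k(\eta)}$ already occurs over some finite subextension $k'/k(\eta)$; realize $k'$ as the fraction field of a finitely generated reduced $E$-algebra $R'\supset R/\fp$ (adjoin a primitive element of $k'/k(\eta)$, clear denominators, then reduce and normalize). The morphism $\Spec R'\to V(\fp)$ is dominant and finite, so the preimage $\Sigma'$ of $\Sigma\cap V(\fp)$ is Zariski dense in $\Spec R'$; and after quotienting $\pi\otimes_R R'$ by its $R'$-torsion submodule (which is $G(F)$-stable) and restricting to the open locus where the fibre rank equals the generic fibre rank, the resulting $R'[G(F)]$-module is smooth, admissible, finitely generated, torsion-free, its generic fibre is the reducible $\pi|_\eta\otimes_{k(\eta)}k'$, and its fibres at points of $\Sigma'$ are of the form $\pi|_x\otimes_{k(x)}k(x')$ which remain irreducible since $\pi|_x$ is absolutely irreducible. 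Applying the irreducible case already established to this $R'$-module yields the required contradiction. The main obstacle throughout is ensuring that finite generation and torsion-freeness survive the operations used (formation of $\widetilde W^K$ in the first half, base change plus torsion-killing in the second), so that the constructibility of zero loci is available when needed.
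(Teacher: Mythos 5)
Your irreducibility argument is correct but genuinely different from the paper's. The paper passes to the maximal torsion‑free quotient $\sigma$ of $\pi/\fp\pi$ to reduce to $R$ a domain, then takes a short exact sequence $0\to V\to\pi\to W\to 0$ with $W|_\eta$ an irreducible quotient, and uses upper semi-continuity of $\dim_{k(x)}(\cdot)^K|_x$ for the coherent sheaves $V^K,\pi^K,W^K$ to force $V^K|_\eta=0$ for all $K$. You instead argue bottom-up by contradiction: you spread a hypothetical proper nonzero submodule $N\subset\pi|_\eta$ out to $\widetilde W=R[G(F)]v_1\subset\pi$, observe $\widetilde W^K$ and $\pi^K/\widetilde W^K$ are coherent by admissibility, and use Chevalley constructibility of the zero loci of $v_1\in\pi^K$ and $\bar v_2\in\pi^K/\widetilde W^K$ to locate a classical point $x\in\Sigma\cap V(\fp)$ at which the image of $\widetilde W\otimes k(x)$ is a proper nonzero submodule of $\pi|_x$. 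Both arguments are sound; the steps where you identify $(\pi|_\eta)^K$ with $(\pi^K)_\fp$ via the $\BQ$-algebra Hecke idempotent $e_K$, pass to $K$-invariants to convert the containment question into a vanishing-of-a-section question, and deduce that a constructible set missing $\eta$ lies in a proper closed subset of the irreducible $V(\fp)$ are all correct.

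One structural difference worth noting: your irreducible-case argument nowhere uses that $\pi$ is torsion-free over $R$ — only smoothness and admissibility. The paper's argument does use torsion-freeness (for the injectivity of $\pi\hookrightarrow\prod_i\sigma_i$ and for the nonvanishing of fibres of $W$). This makes your extra care in the absolutely irreducible case — killing $R'$-torsion in $\pi\otimes_R R'$ and shrinking to a locus of constant fibre rank — unnecessary for your own argument: since torsion-freeness is not among the ingredients you actually used, you may apply your first-part argument directly to $\pi\otimes_R R'$ (which remains finitely generated, smooth, and admissible over $R'$, with $(\pi\otimes_R R')^K=\pi^K\otimes_R R'$ finitely generated). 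Conversely, that extra care does flag a genuine subtlety in the paper: the paper applies its irreducible case to $\hat\pi=\pi\otimes_R\hat R$ without checking torsion-freeness over $\hat R$, which can fail when $\hat R$ is not flat over $R$; your observation that this must be addressed (e.g. by restricting to the flat locus of $\hat R/R$, which is open dense, before base-changing) is a valid point.
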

\begin{proof} 
Let $\{\fp_1=\fp,\fp_2,\cdots,\fp_n\}$ be the set of minimal primes of $R$ and let $\sigma_i$ be the maximal torsion-free quotient of the $R/\fp_i[G(F)]$-module $\pi/\fp_i\pi$. By the argument of \cite[Lemma 6.3.6]{EH14}, the diagonal map  $\pi\to\prod_i \sigma_i$ is an injection.
Let $X_i=\Spec(R/\fp_i)\subset X=\Spec(R)$.      Then for any $x\in U:=X-\cup_{i\neq 1}X_i$, the surjection
	$$\pi|_x\to (\pi/\fp\pi)|_x\to\sigma|_{x}$$
is actually an isomorphism. Note that  $\Sigma\cap U$ is non-empty. As $\sigma$ is torsion-free, we have $\sigma|_x\neq0$ for any $x\in \overline{\{\eta\}}$ by \cite[Lemma 2.1.7]{EH14} and  consequently, $\pi|_x\cong \sigma|_x$ for any $x\in \Sigma\cap  \overline{\{\eta\}}$.

Up to replacing $(\pi,R)$ by $(\sigma, R/\fp)$, we may assume $R$ is an integral domain. Take an exact sequence of $k(\eta)[G(F)]$-representations
$$0\to V|_\eta\to \pi|_\eta\to W|_\eta\to0$$
with $W|_\eta$ irreducible. Let $V:=\pi\cap V|_\eta$ and $W:=\pi/V$.  Then by localization at $\eta$, the natural exact sequence
$$0\to V\to \sigma\to W\to0$$   recovers
$$0\to V|_\eta\to \pi|_\eta\to W|_\eta\to0.$$
 By the upper semicontinuous theorem,  there exists an open subset $X_K^{\prime} \subset X$ for any open compact subgroup $K\subset G(F)$ such that for any $x\in X_K^\prime$, $$\dim_{k(x)}V^K|_x=\dim_{k(\eta)}V^K|_\eta;\quad  \dim_{k(x)}\pi^K|_x=\dim_{k(\eta)}\pi^K|_\eta;\quad  \dim_{k(x)}W^K|_x=\dim_{k(\eta)}W^K|_\eta.$$
Note that $\pi|_x\cong W|_x$ for any $x\in\Sigma\cap X_K^\prime$ when $K$ is small enough, so
$$\dim_{k(\eta)}\pi^K|_\eta=\dim_{k(x)}\pi^K|_x=\dim_{k(x)} W^K|_x=\dim_{k(\eta)} W^K|_\eta.$$
  Consequently, $V|_\eta=0$ and  $\pi|_\eta$ is irreducible.

Assume $\pi|_x$ is absolutely irreducible for all $x\in\Sigma$. To show $\pi|_\eta$ is absolutely irreducible, it suffices to show $\pi|_\eta\otimes_{k(\eta)}C$ is irreducible for any finite field extension $C/k(\eta)$. Take any $R$-subalgebra $\hat{R}\subset C$  which  is a finitely generated $R$-module such that $\Frac(\hat{R})=C$ and consider $\hat{\pi}=\pi\otimes_{R}\hat{R}$.
Let $\hat{\Sigma}$ (resp. $\hat{\eta}$) be the pre-image of $\Sigma$ (resp. $\eta$ ) along  $\Spec(\hat{R})\to X$. Then  $\hat{\Sigma}$ is Zariski dense and for any $\hat{x}\in\hat{\Sigma}$, $\hat{\pi}|_{\hat{x}}$ is irreducible. Hence $\hat{\pi}|_{\hat{\eta}}=\pi|_\eta\otimes_{k(\eta)}C$ is irreducible and we are done.
 \end{proof}

For any  smooth $\CO_X[G(F)]$-module $\pi$,  $\pi^*:=\sheafhom_{\CO_X}(\pi,\CO_X)$ has an $\CO_X[G(F)]$-module structure
$$(g\cdot \ell)(v):=\ell(g^{-1}\cdot v);\quad \forall\ \ell\in\pi^*,\ v\in \pi,\ g\in G(F).$$
 The  $\CO_X[G(F)]$-submodule $\pi^\vee:=\{\ell\in\pi^*\mid\ell\ \mathrm{smooth}\}\subset \pi^*$ is called the {\em smooth dual} of $\pi$.
Let $\pi, \wt{\pi}$ be finitely generated  smooth admissible torsion-free $\CO_X[G(F)]$-modules together with a $G(F)$-equivariant pairing $\langle-,-\rangle:\ \pi\times \tilde{\pi}\to\CO_X$
such that $\pi|_x$ and $\wt{\pi}|_x$ is irreducible for each $x\in\Sigma$ and the 
specialization $\langle-,-\rangle|_x$ of $\langle-,-\rangle$ at each $x\in\Sigma$  is non-degenerate. For latter applications, we shall compare $\pi^\vee$ and $\wt{\pi}$.
\begin{lem}\label{MVWdual}Notations as above. Then $\langle -.-\rangle$ induces an injection $\wt{\pi}\hookrightarrow \pi^\vee$. In particular, $\wt{\pi}|_\eta\cong (\pi|_\eta)^\vee$ for each generic point $\eta\in X$.
\end{lem}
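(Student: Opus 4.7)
The plan is to first establish injectivity of the sheaf map
\[\phi: \wt\pi \lra \pi^\vee, \quad w \longmapsto \langle -, w\rangle,\]
which indeed lands in $\pi^\vee$ rather than merely $\pi^*$ because $\wt\pi$ is smooth. Then I would deduce the generic-fibre isomorphism by a dimension count on $K$-invariants for each compact open $K \subset G(F)$.

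For injectivity I would work locally on an affine open $U = \Spec R$. Take $w \in \wt\pi(U)$ with $\langle v, w\rangle = 0$ for all $v \in \pi(U)$. By smoothness $w \in \wt\pi^K(U)$ for some compact open $K$, and $\wt\pi^K$ is coherent by admissibility. Given $x \in \Sigma \cap U$ I claim $w|_x = 0$ in $\wt\pi|_x$: for any smooth $v' \in \pi|_x$, fix a compact open $K'$ fixing $v'$ and lift it to $v \in \pi^{K'}(U)$ via the surjection $\pi^{K'}(U) \otimes_R k(x) \twoheadrightarrow (\pi|_x)^{K'} = \pi^{K'}|_x$, which uses exactness of $K'$-invariants (valid in characteristic $0$, since $\Vol(K')^{-1} \in R$) together with Nakayama applied to the coherent sheaf $\pi^{K'}$. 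Then $\langle v', w|_x\rangle|_x = \langle v, w\rangle|_x = 0$, and non-degeneracy of $\langle-,-\rangle|_x$ forces $w|_x = 0$. Hence the zero locus of $w$, viewed inside the coherent sheaf $\wt\pi^K$, contains the Zariski-dense set $\Sigma \cap U$; by Chevalley it is constructible, so $w$ is torsion, and torsion-freeness of $\wt\pi$ gives $w = 0$.

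At a generic point $\eta$, the specialisation $\phi|_\eta: \wt\pi|_\eta \to (\pi|_\eta)^\vee$ is a $G(F)$-equivariant map of $k(\eta)[G(F)]$-modules. Lemma \ref{irre} gives irreducibility of $\wt\pi|_\eta$, so $\ker(\phi|_\eta)$ is either zero or all of $\wt\pi|_\eta$. The latter would mean the pairing $\pi|_\eta \otimes \wt\pi|_\eta \to k(\eta)$ vanishes; restricting to the coherent subsheaves $\pi^K \otimes \wt\pi^K \to \CO_X$ this would propagate vanishing to an open neighbourhood of $\eta$ and contradict non-degeneracy at some nearby $x \in \Sigma$. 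Indeed, pick $K$ so small that $\pi|_x^K \neq 0$; the averaging identity $\langle e_K v, w\rangle = \langle v, e_K w\rangle$ shows that $\langle-,-\rangle|_x$ restricts to a non-degenerate pairing on $\pi|_x^K \times \wt\pi|_x^K$. Hence $\phi|_\eta$ is injective.

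For surjectivity I would count $K$-fixed dimensions. The averaging argument of the previous paragraph shows that at each $x \in \Sigma$ the pairing restricts to a non-degenerate pairing $\pi|_x^K \times \wt\pi|_x^K \to k(x)$, so $\dim_{k(x)} \pi|_x^K = \dim_{k(x)} \wt\pi|_x^K$. Upper-semicontinuity of fibre dimensions of the coherent sheaves $\pi^K, \wt\pi^K$ together with Zariski density of $\Sigma$ then yield equality of generic ranks, so
\[\dim_{k(\eta)} (\wt\pi|_\eta)^K = \dim_{k(\eta)} (\pi|_\eta)^K = \dim_{k(\eta)} \bigl((\pi|_\eta)^\vee\bigr)^K.\]
An injection between finite-dimensional spaces of equal dimension is an isomorphism; taking the colimit over $K$ gives $\wt\pi|_\eta \simto (\pi|_\eta)^\vee$. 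The main technical subtlety, as I see it, lies in consistently passing to the coherent subsheaves $(-)^K$ before invoking constructibility of zero loci and upper-semicontinuity, which are only stated for coherent sheaves; exactness of $K$-invariants in characteristic $0$ is what legitimises this reduction throughout.
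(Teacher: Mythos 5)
Your proof is correct and gives a genuinely different argument for the generic-fibre isomorphism. Both you and the paper pass through the adjoint map $\phi : \wt{\pi} \to \pi^\vee$ and use torsion-freeness of $\wt{\pi}$ to upgrade vanishing over the dense set $\Sigma$ to vanishing of $\ker\phi$. At a generic point $\eta$, however, the paper applies Lemma~\ref{irre} to both $\pi$ and $\wt{\pi}$: since $\wt{\pi}|_\eta$ and $(\pi|_\eta)^\vee$ are irreducible (the smooth dual of an irreducible admissible representation is irreducible), the nonzero $G(F)$-equivariant map $\phi_\eta$ is automatically an isomorphism. You instead deduce injectivity of $\phi|_\eta$ from irreducibility of the source alone and get surjectivity by a dimension count on $K$-invariants: the equality $\dim\pi|_x^K=\dim\wt{\pi}|_x^K$ for $x\in\Sigma$ (coming from the non-degenerate restricted pairing) propagates to $\eta$ by upper semicontinuity, and $\dim(\pi|_\eta)^K=\dim((\pi|_\eta)^\vee)^K$ by admissibility. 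Your route is longer but more hands-on; it buys you not having to invoke irreducibility of the contragredient. Two small points worth tightening: the upper-semicontinuity step silently uses that $\Sigma$ is dense in the component $\overline{\{\eta\}}$, not merely in $X$ (this holds because the complement of the other irreducible components is a nonempty open subset of $\overline{\{\eta\}}$, which $\Sigma$ must meet densely), and the same density issue underlies the paper's assertion that $f_\eta\neq 0$. Also, your third paragraph is redundant, since injectivity of $\phi$ already yields injectivity of $\phi_\eta$ because localization at the minimal prime underlying $\eta$ is exact and $\pi^\vee_\eta\cong(\pi|_\eta)^\vee$ by admissibility and flatness; in any case the vanishing of the pairing in that paragraph propagates to all of $\overline{\{\eta\}}$, not merely an open neighbourhood of $\eta$.
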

\begin{proof}By the Hom-Tensor adjunction, the pairing $\langle -,- \rangle$ gives an element $f\in \Hom_{\CO_X[G(F)]}(\wt{\pi}, \pi^\vee)$ such that for each $x\in\Sigma$, the induced morphism $$f|_x:\ \wt{\pi}|_x\to \pi^\vee|_x\to (\pi|_x)^\vee$$ is an isomorphism. Then  for any generic point $\eta\in X$, $$f_\eta\in \Hom_{k(\eta)[G(F)]}(\wt{\pi}_\eta, \pi^\vee_\eta)=\Hom_{\CO_X[G(F)]}(\wt{\pi},\pi^\vee)\otimes_{\CO_X}k(\eta)$$
is non-zero. Thus by Lemma \ref{irre}, $f_\eta:\ \wt{\pi}|_\eta\to \pi^\vee|_\eta$ is an isomorphism and $\Ker(f)_\eta=0$. As $\Ker(f)$ is torsion-free, one has $\Ker(f)=0$ and we are done.
\end{proof}
We remark that  if $\pi|_x$ is irreducible for all $x\in\Sigma$, one can construct a finitely generated smooth admissible torsion-free $\CO_X[G(F)]$-module $\wt{\pi}$ from $\pi$ such that for each $x\in \Sigma$,  $\wt{\pi}|_x\cong (\pi|_x)^\vee$  by the MVW involution at least for $G$  classical (see \cite{Pra19} for details). When $G=\GL_n$, one can even define a $G(F)$-invariant pairing $\langle -,-\rangle:\ \pi\times\wt{\pi}\to\CO_X$ which induces non-degenerate pairing   $\langle-,-\rangle|_x:\  \pi|_x\times\wt{\pi}|_x\to k(x)$ at each $x\in \Sigma$ upon shrinking $X$ to an open subset containing $\Sigma$ (see \cite[Proposition 5.2.6]{Dis20} for details).

\section{Local periods in families}\label{sec-conj}
In this section, we formulate several conjectures about  canonical local periods for spherical varieties. Throughout this section,  let  $G$ be a reductive group over $F$ and $Y$ be a $G$-variety  over $F$ which is
\begin{itemize}
	\item {\em homogeneous affine} i.e. $G(\bar{F})$ acts transitively on $Y(\bar{F})$ and the stabilizers of  points on $Y(\bar{F})$ are reductive.
	\item  {\em spherical} i.e. $Y$ is normal and  there is a Borel subgroup of $G_{\bar{F}}$ whose  orbit on $Y_{\bar{F}}$ is Zariski open.
\end{itemize}
Moreover, we assume $Y(F)\neq\emptyset$ and  let $H=G_{y_0}$ for a fixed $y_0\in Y(F)$. Note that $H$ is a reductive $F$-subgroup of $G$ and $Y$ is isomorphic to the geometric quotient $H\backslash G$.
\subsection{Plancherel decompositions on spherical varieties}
Fix a Haar measure  $d\mu_Y$  on $Y(F)$ such that $\int_{Y(F)}fd\mu_Y\in \BQ$  for any $f\in\CS(Y(F),\BQ)$.
Then  the Hilbert space  $L^2(Y(F))$ of square-integrable functions with respect to $\mu_Y$ is
naturally an  unitary $G(F)$-representation.  As $G(F)$ is a postliminal locally compact group,
there is  an isomorphism of unitary $G(F)$-representations, which is unique in a suitable sense
(for details, see \cite[Section 3.2]{Li18})
\[L^2(Y(F)) \stackrel{\sim}{\lra} \int_{\wh{G}} \CH_\pi d\mu(\pi).\]
Here,
\begin{itemize}
	\item $\wh{G}$ is the space of unitary irreducible $G(F)$-representations  with the
	Fell topology and $\mu$ is
	a Radon measure on $\wh{G}$.
	\item $\{\CH_\pi\}_\pi$ is a measurable field of Hilbert spaces and for each $\pi$, $\CH_\pi$ is
	a unitary representation isomorphic to a direct sum of at most countably many copies of $\pi$.
\end{itemize}	
Determining the Plancherel decomposition is a fundamental problem in harmonic analysis.

When $G=H\times H$ and $Y=\Delta(H)\bs H\times H(\cong H)$, then
\[L^2(H(F)) \cong \int_{\wh{H}} \pi \boxtimes \bar{\pi} d\mu_H^{\PL}(\pi)\]
where  $\pi \boxtimes \bar{\pi} \cong\End(\pi)$ is endowed with the Hilbert-Schmidt norm and  $\mu_H^{\PL}$
is the  Plancherel measure on $\wh{H}$. Here, $\bar{\pi}$ is the complex conjugation of $\pi$.  
Note that Harish-Chandra proved that the support of $\mu_H^{\PL}$ is the tempered representations.

When $Y$ is {\em strongly tempered} in the sense  that  the matrix coefficients of any tempered $G(F)$-representation $\pi$  are absolutely integrable over $H(F)$,  the measure $\mu$ is the Plancherel measure $\mu_G^{\PL}$  (See \cite[Theorem 6.2.1]{SV}).

 When $G$ is {\em split}, Sakellaridis-Venkatesh gives a conjectural description of the Plancherel decomposition.
Let $G^\vee$ be the dual group of $G$.  By Sakellaridis-Venkatesh \cite[Chapter 2]{SV} and Knop-Schalke \cite{KS}, there is
a dual group $G_Y^\vee$ over $\BC$ equipped with a morphism $f: G_Y^\vee \times \SL_2(\BC) \ra G^\vee$.  Denote by $G_Y$ the
split group  over $F$ dual to $G_Y^\vee$.  It is expected (see \cite[Section 17.3]{SV}) that there is a measure
$\mu_{G_Y}^{\PL}$ on the $G_Y^\vee$-conjugacy classes of tempered Langlands parameters on $G_Y$
such that the Plancherel decomposition has the form
\[L^2(G_Y(F)) \stackrel{\sim}{\lra} \int_{[\phi]} \CH_{[\phi]} d\mu_{G_Y}^{\PL}([\phi]),\quad
\CH_{[\phi]}=\bigoplus \pi\boxtimes\bar{\pi}\]
where for each $[\phi]$, $\pi$ runs over the $L$-packet associated to $[\phi]$ and the Hilbert
structure on each $\pi \boxtimes \bar{\pi}$ is given by an integer multiple of the Hilbert-Schmidt norm.

Let $W_F$ be the Weil group of $F$ and $L_F=W_F\times\SL_2(\BC)$ be the Langlands group. A (local) Arthur parameter is a homomorphism
\[\psi: L_F \times \SL_2(\BC) \lra G^\vee\]
such that $\psi\mid_{L_F}$ is a tempered (i.e. bounded on $L_F$) Langlands parameter and $\psi\mid_{\SL_2(\BC)}$ is algebraic.
Given any Arthur parameter, the associated Langlands parameter is given by
\[L_F \lra L_F \times \SL_2(\BC), \quad w \mapsto \left( w , \matrixx{|w|^{1/2}}{0}{0}{|w|^{-1/2}} \right).\]
It is conjectured that to each $G^\vee$-conjugacy class of Arthur parameter $[\psi]$, we may ``naturally''
associate a finite set of unitary $G(F)$-representations, the so-called Arthur packet of $[\psi]$.
It contains the $L$-packet of the associated Langlands parameter.

A {\em $Y$-distinguished  Arthur parameter} (see \cite[Section 16.2]{SV}) is a commutative diagram
$$\xymatrix{& G_Y^\vee \times\SL_2(\BC) \ar[dr]^{f} & \\ L_F\times \SL_2(\BC) \ar[rr]^{\psi} \ar[ur]^{(\phi,\id)} & & G^\vee}$$
where $\psi:\ L_F \times \SL_2(\BC) \lra G^\vee$ ia an Arthur parameter and
$\phi:\ L_F  \lra G_Y^\vee$ is a tempered Langlands parameter.
The local conjecture  predicts the Plancherel decomposition of $L^2(Y(F))$ is given by the
$Y$-distinguished Arthur parameters of $G$.
\begin{conj}[Weak form of the local conjecture {\cite[Conjecture 16.2.2]{SV}}] \label{conj-local}
	There is a direct integral decomposition
	\[\tag{$*$} L^2(Y(F)) \stackrel{\sim}{\lra} \int_{[\psi]} \CH_{[\psi]} \mu_{G_Y}^{\PL}([\psi])\]
	where
	\begin{itemize}
		\item $[\psi]$ runs over $G_Y^\vee$-conjugacy classes of $Y$-distinguished Arthur parameter.
		\item $\CH_{[\psi]}$ is isomorphic to a (possibly empty) direct sum of irreducible representations
		belonging to the Arthur packet of $[\psi]$.
	\end{itemize}
\end{conj}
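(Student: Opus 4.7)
The plan is to combine the general abstract Plancherel decomposition for spherical varieties with the (conjectural) Arthur packet machinery for $G(F)$, and to match the two sides through an induction on the rank of $Y$. I would start from the Bernstein-Sakellaridis-Venkatesh decomposition
\[L^2(Y(F))\cong \int_{\widehat{G}}\CH_\pi\, d\mu(\pi)\]
which is already available, and reduce $(*)$ to two independent matching statements: (i) the support of $\mu$ coincides with the image in $\widehat{G}$ of the irreducible constituents of Arthur packets attached to $Y$-distinguished parameters $\psi$, and (ii) the spectral measure together with its multiplicities equals the pushforward of $\mu_{G_Y}^{\PL}$ under the map $[\phi]\mapsto[\psi]$ sending $\phi$ to $\psi=f\circ(\phi,\id)$.

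For (i), I would argue inductively using the Sakellaridis-Venkatesh theory of boundary degenerations $Y_\Theta$ for $\Theta\subset\Delta_G$. For the relatively discrete part of $L^2(Y)$, one shows that a tempered $\pi$ occurs discretely exactly when its Langlands parameter factors as $f\circ(\phi,\id)$ with $\phi$ an elliptic parameter into $G_Y^\vee$; this would come from asymptotic analysis of matrix coefficients along $H$-orbits, in the spirit of Casselman's canonical pairing theorem invoked earlier in the paper. The continuous spectrum would then be produced by an Eisenstein-type transfer from $L^2(Y_\Theta)$ into $L^2(Y)$, whose dual-group content sits inside Levi subgroups $G_{Y_\Theta}^\vee\subset G_Y^\vee$; the induction step identifies the parameters contributing to $L^2(Y)$ with those $\phi$ factoring through such Levis after composition with $f$, which is exactly the $Y$-distinguished condition in the diagram defining $\psi$.

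For (ii), I would exploit Harish-Chandra's Plancherel formula for the split group $G_Y(F)$, which gives an explicit expression for $\mu_{G_Y}^{\PL}$ in terms of Harish-Chandra $c$-functions and formal degrees of discrete series on Levi subgroups. On the spherical side, a Bernstein-by-Bernstein computation of the spectral density of $L^2(Y)$, combined with a relative Maass-Selberg identity in the style of \cite[Chapter 9]{SV}, should produce a matching formula: both sides should be expressible in terms of the same $c$-functions attached to the spherical roots of $Y$, whose combinatorics is precisely what distinguishes $G_Y$ from $G$. Integrality and nonnegativity of multiplicities would then follow from realizing each $\CH_{[\psi]}$ as a subspace of $L^2(Y)$ cut out by well-defined idempotents in the Bernstein center, without needing the finer internal structure of Arthur packets that the full (non-weak) conjecture requires.

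The hard part is threefold. First, the statement presupposes the existence of Arthur packets with their expected internal structure, which at present is known in full only for classical groups via the works of Arthur and Mok; a general proof must await the stable trace formula in full generality. Second, the measure-matching in (ii) is essentially a global input in disguise: one expects it to come from a relative trace formula comparison between $Y$ and $G_Y$ \`{a} la Jacquet-Rallis and Beuzart-Plessis, and such comparisons are established only in a small list of cases. Third, the ``possibly empty'' clause conceals a subtle dichotomy --- for a given $[\psi]$ the space $\CH_{[\psi]}$ can vanish, and deciding this amounts to a relative analogue of Prasad-style $\varepsilon$-dichotomy for which no uniform local criterion is presently known; this is the principal obstruction to turning the inductive scheme above into a theorem outside of strongly tempered cases already treated in the literature.
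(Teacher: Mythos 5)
This statement is not a theorem of the paper at all: it is Conjecture \ref{conj-local}, quoted verbatim from Sakellaridis--Venkatesh \cite[Conjecture 16.2.2]{SV}, and the paper offers no proof of it. The authors only \emph{assume} it (together with the expected well-definedness of the forms $\langle\cdot,\cdot\rangle_\pi$) in order to define the canonical local periods $Q_\pi$ in the non-strongly-tempered setting; in the strongly tempered case they bypass it entirely via \cite[Theorem 6.2.1]{SV}. So there is nothing in the paper to compare your argument against, and the task of ``proving'' the statement is not one the paper undertakes.

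As for your proposal itself: it is a reasonable sketch of a research program, but it is not a proof, and you essentially concede this in your final paragraph. Every load-bearing step rests on inputs that are currently unavailable in the stated generality --- the existence and internal structure of local Arthur packets for arbitrary (split) reductive $G$, the measure-matching between the spectral density of $L^2(Y)$ and $\mu_{G_Y}^{\PL}$ (which, as you note, is expected to require a relative trace formula comparison known only in a short list of cases), and a local criterion deciding when $\CH_{[\psi]}$ vanishes. The inductive scheme via boundary degenerations and the scattering/Eisenstein analysis is indeed the mechanism SV develop for the discrete-modulo-center and continuous parts of the spectrum, but even there the identification of the discrete spectrum with elliptic parameters of $G_Y^\vee$ is conjectural beyond low-rank and special families. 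In short: the statement remains an open conjecture, your outline identifies the right circle of ideas but supplies no new argument closing any of the known gaps, and it should not be presented as a proof of Conjecture \ref{conj-local}.
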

\begin{remark}In general, Conjecture \ref{conj-local} states only necessary conditions for a representation to appear in the Plancherel decomposition of $L^2(Y(F))$. When $Y$ is strongly tempered and $G$ is split, it is claimed that $G_Y^\vee = G^\vee$  in \cite[Page 7]{SV} and the Plancherel decomposition in \cite[Theorem 6.2.1]{SV} is compatible with the form in Conjecture \ref{conj-local}.
\end{remark}

\subsection{Canonical local periods} We now  construct a ``canonical'' bi-$H(F)$-invariant pairing
on $\pi\otimes \pi^\vee$  for $\mu_{G_Y}^{\PL}$-almost all  $\pi$ in the Plancherel decomposition of $L^2(Y(F))$.

Let $C^\infty(Y(F))$ (resp. $\CS(Y(F))$) be the space of complex valued smooth (resp. Schwartz) functions on $Y(F)$ equipped with the $G(F)$-action given by right translation. 
The $G(F)$-invariant bilinear pairing
\[\CS(Y(F))\times C^\infty(Y(F))\to\BC,\quad (f,g)\mapsto\int_{Y(F)}f(y)g(y)d\mu_Y(y)\]
induces an injection of $G(F)$-modules
\[\CS(Y(F))^\vee \hookrightarrow C^\infty(Y(F))\]
identifying $\CS(Y(F)^\vee$ with vectors in $C^\infty(Y(F))$ which are smooth under the action of $G(F)$.

For any irreducible smooth admissible  $G(F)$-representation $\pi$,
denote by
$$\CS(Y(F))_\pi:=\CS(Y(F))/\CN_\pi,\quad \CN_\pi:=\bigcap_{f\in \Hom_{G(F)}(\CS(Y(F)),\pi)}\ker f$$
 the maximal $\pi$-isotypic quotient of $\CS(Y(F))$.

Assume now $G$ is split and  Conjecture \ref{conj-local} holds. Then by the Gelfand-Kostyuchenko method (see \cite{Li18}), the decomposition $(*)$
determines a $G(F)$-invariant positive semi-definite Hermitian form
\[ \langle \cdot,\cdot \rangle_\pi:\ \CS(Y(F))\times \CS(Y(F)) \lra \BC\]
which factors through  $\CS(Y(F))_\pi$ for $\mu_Y^{\PL}$-almost all $\pi$ such that

\[\int_{Y(F)} f_1(x)\ov{f_2(x)} dx  = \int_{[\psi]} \sum_{\pi \in [\psi]}
\langle f_1,f_2 \rangle_{\pi} d\mu_{G_Y}^{\PL}([\psi]),\quad \forall f_1,f_2 \in \CS(Y(F)) .\]
It is expected (see \cite[Section 17.3]{SV}) and we will assume that $\langle \cdot,\cdot\rangle_\pi$
is well-defined for \textit{all} the unitary representation $\pi$ appearing in the decomposition $(*)$.

Note that by \cite[Section 6.1]{SV}, there is a canonical isomorphism
\[\CS(Y(F))_\pi \cong V(\pi) \otimes \pi,\quad V(\pi):=\Hom_{G(F)}(\CS(Y(F)),\pi)^*.\]
Then the complex conjugation $\ov{\CS(Y(F))_\pi}$ of $\CS(Y(F))_\pi$ is isomorphic to  $\CS(Y(F))_{\bar{\pi}}$
canonically and the Hermitian form $\langle\cdot,\cdot\rangle_\pi$ on $\CS(Y(F))_\pi$ induces a non-trivial  $G(F)$-invariant
linear functional
\[\ell_\pi:\ \CS(Y(F))_\pi \otimes \CS(Y(F))_{\bar{\pi}}\to\BC.\]
Note that  the irreducibility of $\pi$ implies
$$\dim_{\BC}\Hom_{G^2(F)}(\pi\boxtimes\bar{\pi},\pi\boxtimes\bar{\pi})=\dim_{\BC}\Hom_{G(F)}(\pi\otimes\bar{\pi},\BC)=1.$$
The following lemma is straightforward.
\begin{lem}Any non-degenerate $G(F)$-invariant linear functional $\ell$ on $\pi \otimes \bar{\pi}$ induces an isomorphism
	\begin{align*}
	\Hom_{G^2(F)}\left( \CS(Y(F))_\pi \boxtimes \CS(Y(F))_{\bar{\pi}}, \pi \boxtimes \bar{\pi} \right)
	& \stackrel{\sim}{\lra} \Hom_{G(F)}\left( \CS(Y(F))_\pi \otimes \CS(Y(F))_{\bar{\pi}},\BC\right)\\
	p&\longmapsto \ell\circ{p}
	\end{align*}
\end{lem}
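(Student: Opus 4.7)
The plan is to identify both Hom spaces with the same finite-dimensional vector space $(V(\pi) \otimes V(\bar\pi))^*$ via the canonical decomposition $\CS(Y(F))_\pi \cong V(\pi) \otimes \pi$ (and its analogue for $\bar\pi$), and then show that composition with $\ell$ is multiplication by a nonzero scalar under these identifications.

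First I would unpack the LHS using the tensor–hom adjunction. Since $V(\pi)$ and $V(\bar\pi)$ are mere vector spaces with trivial $G(F)$-action, we have
\[
\Hom_{G^2(F)}\bigl((V(\pi) \otimes \pi) \boxtimes (V(\bar\pi) \otimes \bar\pi),\ \pi \boxtimes \bar\pi\bigr) \cong \Hom_\BC\bigl(V(\pi) \otimes V(\bar\pi),\ \End_{G^2(F)}(\pi \boxtimes \bar\pi)\bigr).
\]
By the irreducibility of $\pi$ (which is what gives the stated one-dimensionality $\dim_\BC \Hom_{G(F)}(\pi \otimes \bar\pi,\BC) = 1$, equivalently $\End_{G^2(F)}(\pi \boxtimes \bar\pi) = \BC$ by the external tensor product form of Schur's lemma), the right-hand side reduces to $(V(\pi) \otimes V(\bar\pi))^*$. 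Entirely analogously, using the same one-dimensionality, the RHS unpacks as
\[
\Hom_{G(F)}\bigl((V(\pi) \otimes \pi) \otimes (V(\bar\pi) \otimes \bar\pi),\ \BC\bigr) \cong \Hom_\BC\bigl(V(\pi) \otimes V(\bar\pi),\ \Hom_{G(F)}(\pi \otimes \bar\pi,\BC)\bigr) \cong (V(\pi) \otimes V(\bar\pi))^*.
\]

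Next I would trace the map $p \mapsto \ell \circ p$ through these identifications. A morphism $p$ on the LHS corresponds, under the decomposition, to a linear form $\phi \in (V(\pi) \otimes V(\bar\pi))^*$ tensored with the identity of $\pi \boxtimes \bar\pi$ (the chosen generator of $\End_{G^2(F)}(\pi \boxtimes \bar\pi)$). Post-composing with $\ell$ sends this to $\phi$ tensored with $\ell$, which is precisely the element of the RHS corresponding to $\phi$ under its identification with $(V(\pi) \otimes V(\bar\pi))^*$ via the basis $\{\ell\}$ of $\Hom_{G(F)}(\pi \otimes \bar\pi,\BC)$. Hence the map becomes the identity (up to the chosen bases), and in particular an isomorphism as long as $\ell \neq 0$, which is ensured by non-degeneracy.

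I do not expect any real obstacle here; this is a formal Schur-lemma argument. The only thing to be careful about is the bookkeeping of the canonical isomorphisms: one must check that the canonical identification $\CS(Y(F))_\pi \cong V(\pi) \otimes \pi$ is $G(F)$-equivariant (with $V(\pi)$ carrying the trivial action) so that the tensor–hom adjunction outputs bare vector spaces on both sides, and that the two appearances of the one-dimensional space ($\End_{G^2(F)}(\pi \boxtimes \bar\pi)$ on the LHS and $\Hom_{G(F)}(\pi \otimes \bar\pi,\BC)$ on the RHS) are related by precisely the operation $T \mapsto \ell \circ T$. Once this compatibility is in place, injectivity of the map follows from $\ell \neq 0$, and surjectivity then follows from the equality of dimensions computed above.
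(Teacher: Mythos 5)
Your proposal is correct and follows essentially the same route as the paper: both use the canonical decomposition $\CS(Y(F))_\pi \cong V(\pi)\otimes\pi$, identify the two Hom spaces with $(V(\pi)\otimes V(\bar\pi))^*$ via the one-dimensionality of $\End_{G^2(F)}(\pi\boxtimes\bar\pi)$ and $\Hom_{G(F)}(\pi\otimes\bar\pi,\BC)$, and observe that these two one-dimensional spaces are matched up exactly by $T \mapsto \ell\circ T$. The only cosmetic difference is that the paper invokes a K\"unneth-type statement from Prasad for the first isomorphism, whereas you derive it directly from the trivial $G^2(F)$-action on $V(\pi)\otimes V(\bar\pi)$.
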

\begin{proof}
By  \cite[Theorem 3.1]{Pra18}, one has	\begin{align*}\Hom_{G(F)^2}\left( \CS(Y(F))_\pi \boxtimes \CS(Y(F))_{\bar{\pi}},
	\pi \boxtimes \bar{\pi} \right) &\cong
	\Hom(V(\pi) \otimes V(\bar{\pi}),\BC) \otimes \Hom_{G^2(F)}(\pi \boxtimes \bar{\pi},
	\pi \boxtimes \bar{\pi})\\
	&\cong \Hom(V(\pi) \otimes V(\bar{\pi}),\BC) \otimes \Hom_{G(F)}(\pi \otimes \bar{\pi},\BC)\\
	&=\Hom_{G(F)}( (V(\pi) \otimes V(\bar{\pi})) \otimes (\pi \otimes \bar{\pi}),\BC)\\
	&\cong 	\Hom_{G(F)}(\CS(Y(F))_\pi \otimes \CS(Y(F))_{\bar{\pi}},\BC)
	\end{align*}
	where the second $\cong$ is given by composing with  $\ell$.
\end{proof}
 Fix a non-degenerate $G(F)$-invariant pairing $\ell$ on $\bar{\pi}\otimes \pi$. Let
\[p_\ell:\ \CS(Y(F)) \boxtimes \CS(Y(F)) \lra \CS(Y(F))_{\bar{\pi}}\boxtimes \CS(Y(F))_\pi \lra \bar{\pi} \boxtimes \pi \]
be the unique $G^2(F)$-equivariant map such that $\ell\circ p_{\ell}=\ell_{\bar{\pi}}$
by the above lemma.
Taking dual, we obtain a $G^2(F)$-equivariant map
$$p_{\ell}^\vee:
\pi \boxtimes \bar{\pi} \stackrel{\sim}{\lra} (\bar{\pi}\boxtimes\pi)^\vee\to (\CS(Y(F))^2)^\vee
\stackrel{}{\lra} C^{\infty}(Y(F))^2.$$
Composing with the $H^2(F)$-equivariant evaluation map
$$\mathrm{ev}_{y_0,y_0}:\ C^{\infty}(Y(F))^2\to \BC,\quad f\mapsto f(y_0,y_0),$$ one obtains a bi-$H(F)$-invariant linear functional
\[P_{\pi,\ell}: \pi \boxtimes \bar{\pi} \xrightarrow{p_{\ell}^\vee} C^\infty(Y(F))^2 \stackrel{\mathrm{ev}_{y_0,y_0}}{\lra} \BC.\]

View $\ell$ as a  $G(F)$-invariant pairing on $\pi\otimes \bar{\pi}$ via the natural identification $$\bar{\pi}\otimes \pi=\pi\otimes\bar{\pi}\quad  a\otimes b\mapsto b\otimes a.$$ By construction, it is straightforward to check (see \ref{fibers} for notations)
\begin{itemize}
	\item The $\BC^\times$-subset $$(\pi\otimes\bar{\pi})^{\star,-1}=\{ v \in \pi\otimes\bar{\pi}\mid \ell(v)\neq0 \}$$ is independent of the choice of $\ell$.
	\item  Denote the space  $Q_{(\pi\otimes\bar{\pi})^{\star}}(\pi\otimes\bar{\pi})$  by $Q(\pi\otimes\bar{\pi})$. Then the map
	\[Q_{\pi\otimes\bar{\pi}}:\  Q(\pi\otimes\bar{\pi})\to\BC,\quad \frac{v}{w}\mapsto  \frac{P_{\pi,\ell}(v)}{\ell(w)}\]
	is independent on the choice of $\ell$.
\end{itemize}

Note that $\bar{\pi}\cong \pi^\vee$ as $G(F)$-representations.
\begin{defn}\label{Can Period}Let $Q_\pi:\ Q(\pi\otimes\pi^\vee)\to\BC$ be the map $Q_{\pi\otimes\bar{\pi}}$ precomposed with any isomorphism $\kappa:\ \pi^\vee\cong\bar{\pi}$.
\end{defn}
By the above discussions, we have the following lemma.
\begin{lem}The map $Q_\pi$ is independent of the choice of the isomorphism in Plancherel decomposition
	$(*)$, the $G(F)$-invariant linear form $\ell$  and the isomorphism $\kappa$.
\end{lem}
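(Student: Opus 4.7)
My plan is to verify the three asserted independences in turn; the arguments for the first two are a direct bookkeeping of scalars, while the third is an appeal to the uniqueness of direct-integral decompositions.

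For independence from $\ell$: under $\ell\mapsto c\ell$ with $c\in\BC^\times$, the defining identity $\ell\circ p_\ell=\ell_{\bar\pi}$ forces $p_{c\ell}=c^{-1}p_\ell$, and hence $p_{c\ell}^\vee=c^{-1}p_\ell^\vee$. On the other hand, the identification $\pi\boxtimes\bar\pi\stackrel{\sim}{\to}(\bar\pi\boxtimes\pi)^\vee$ entering the construction of $P_{\pi,\ell}$ is itself furnished by $\ell$ bilinearly on the two factors (using $\ell$ to identify $\bar\pi\cong\pi^\vee$ and $\pi\cong\bar\pi^\vee$), and therefore rescales by $c^2$. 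Combining these effects, $P_{\pi,c\ell}=c\cdot P_{\pi,\ell}$, which cancels exactly against the rescaling of the denominator $\ell(w)$ by $c$. This simply recovers what was already asserted in the second bullet preceding the lemma.

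For independence from $\kappa$: under $\kappa\mapsto c\kappa$, the induced map $1\otimes\kappa:\pi\otimes\pi^\vee\to\pi\otimes\bar\pi$ rescales by $c$, so both $v$ and $w$, viewed as elements of $\pi\otimes\bar\pi$ via $1\otimes\kappa$, acquire a factor of $c$; by bilinearity of $P_{\pi,\ell}$ and $\ell$ the common factor cancels in the ratio $P_{\pi,\ell}(v)/\ell(w)$.

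The main obstacle is independence from the realization of $(*)$. The general uniqueness of direct-integral decompositions for $L^2$ of postliminal locally compact groups (see \cite[\S3.2]{Li18}) shows that any two realizations of $(*)$ produce the same pointwise Hermitian forms $\langle\cdot,\cdot\rangle_\pi$ for $\mu_{G_Y}^{\PL}$-almost every $\pi$; combined with the blanket assumption (recorded just before the construction of $P_{\pi,\ell}$) that $\langle\cdot,\cdot\rangle_\pi$ is well-defined for \emph{all} unitary $\pi$ appearing in $(*)$, this yields a canonical $\langle\cdot,\cdot\rangle_\pi$, whence canonical $\ell_\pi$, $p_\ell$, $p_\ell^\vee$, $P_{\pi,\ell}$, and finally $Q_\pi$. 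The subtle point I expect to have to pin down is the coherence between the measure-theoretic almost-everywhere uniqueness and the pointwise use made in the construction of $P_{\pi,\ell}$; I anticipate that this will be handled by transporting the entire construction to the level of abstract smooth representations through the canonical isomorphism $\CS(Y(F))_\pi\cong V(\pi)\otimes\pi$ recalled immediately before the lemma.
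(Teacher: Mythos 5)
Your scalar-tracking argument is correct and coincides with the \emph{``by construction, it is straightforward to check''} verification that the paper leaves unwritten: Schur's lemma reduces the choices of $\ell$ and $\kappa$ to rescalings $c\ell$, $c\kappa$, your bookkeeping $P_{\pi,c\ell}=c\,P_{\pi,\ell}$ (the $c^{-1}$ from $p_{c\ell}$ against the $c^2$ from the $\ell$-induced identification $\pi\boxtimes\bar\pi\cong(\bar\pi\boxtimes\pi)^\vee$) cancels against $\ell(w)\mapsto c\,\ell(w)$, the degree-$0$ homogeneity of $Q_{\pi\otimes\bar\pi}$ absorbs the rescaling of $\kappa$, and the Plancherel independence is exactly the essential-uniqueness of the direct-integral decomposition plus the paper's blanket assumption that $\langle\cdot,\cdot\rangle_\pi$ is well-defined pointwise. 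The paper gives no further proof beyond the two bullet points preceding the lemma, so your write-up is the same argument made explicit.
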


When the spherical variety $Y$ is  strongly tempered, we can proceed in a same manner to define  $Q_\pi$ by taking the measure $\mu_G^{\PL}$ in the Plancherel decomposition.  Moreover, in this case, there is a down-to-earth description of $Q_\pi$.
\begin{prop}[Theorem 6.2.1 \cite{SV}] \label{SRP}
	Assume $Y$ is a strongly tempered spherical $G$-variety. Then for any tempered $G(F)$-representation $\pi$,
	\[Q_\pi\left(\frac{v}{w}\right) =
	\frac{\int_{H(F)}\ell(\pi(h)v)dh}{\ell(w)}, \quad v,w\in\pi \otimes \pi^\vee\quad \ell(w) \not= 0.\]
	Here,  $\ell:\ \pi \otimes \pi^\vee\to\BC$ is any non-trivial $G(F)$-invariant linear form  and the Haar measure on $H(F)$ is
	determined by those measures on $Y(F)$ and $G(F)$.
\end{prop}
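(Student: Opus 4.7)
The plan is to make explicit the abstract Plancherel construction of $Q_\pi$ in the strongly tempered regime and match it with matrix coefficient integration. The strong temperedness hypothesis is precisely what converts the abstract canonical period into a convergent integral.

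First I verify the right-hand side is well-defined and has the right invariance. For $v = v_1 \otimes v_2 \in \pi \otimes \pi^\vee$, the function $h \mapsto \ell(\pi(h)v) = \langle \pi(h)v_1, v_2 \rangle$ is a matrix coefficient of the tempered representation $\pi$, hence absolutely integrable over $H(F)$ by strong temperedness. Left-invariance of the Haar measure on $H(F)$ yields bi-$H(F)$-invariance of the resulting functional, and independence of $\ell$ (after normalization by $\ell(w)$) is immediate since any two $G(F)$-invariant pairings on $\pi \otimes \pi^\vee$ differ by a scalar by irreducibility of $\pi$.

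Next I would introduce the explicit matrix-coefficient map
\[M : \pi \otimes \pi^\vee \lra C^\infty(Y(F) \times Y(F)), \qquad M(v_1 \otimes v_2)(g_1 y_0, g_2 y_0) := \int_{H(F)} \langle \pi(g_1^{-1} h g_2)v_1, v_2 \rangle dh.\]
The integral converges absolutely by strong temperedness, and $M$ is $G(F) \times G(F)$-equivariant; moreover $\mathrm{ev}_{y_0,y_0} \circ M(v_1 \otimes v_2) = \int_{H(F)} \ell(\pi(h)v_1 \otimes v_2)\,dh$, which is the numerator of the right-hand side. Via the fixed isomorphism $\kappa : \pi^\vee \simto \bar\pi$, it therefore suffices to identify $M$, transported along $\kappa$, with $p_\ell^\vee$ up to the normalization built into $Q_\pi$.

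For this identification I would appeal to the strongly tempered Plancherel formula \cite[Theorem 6.2.1]{SV}: there one has $\mu_{G_Y}^{\PL} = \mu_G^{\PL}$, and the Hermitian form $\langle f_1, f_2 \rangle_\pi$ on $\CS(Y(F))_\pi$ is given explicitly in terms of matrix coefficients arising from the $H(F)$-averaging map $\CS(G(F)) \to \CS(Y(F))$, $\varphi \mapsto \int_{H(F)} \varphi(h \cdot)\,dh$. Dualizing this description, $p_\ell^\vee$ becomes precisely the map $M$, with the normalization factor $1/\ell(w)$ forced by the defining compatibility $\ell \circ p_\ell = \ell_{\bar\pi}$. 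Applying $\mathrm{ev}_{y_0,y_0}$ then yields the claimed formula for $Q_\pi(v/w)$.

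The main obstacle is the explicit formula for $\langle f_1, f_2 \rangle_\pi$ on $\CS(Y(F))_\pi$ in the strongly tempered case, which is the substance of \cite[Theorem 6.2.1]{SV} and ultimately rests on transporting Harish-Chandra's Plancherel formula for $L^2(G(F))$ through the $H(F)$-averaging map; this transport is valid exactly under the strong temperedness hypothesis, which guarantees the absolute convergence of all the integrals involved. Once this is accepted, the matching with $P_{\pi,\ell}$ and hence the formula for $Q_\pi$ is a formal unpacking of the definitions.
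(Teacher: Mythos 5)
The paper offers no proof of this proposition — it is stated as a direct citation of \cite[Theorem 6.2.1]{SV}, and the text immediately moves on — so there is no internal argument to compare against. Your sketch is a plausible unpacking of what the citation supplies: you correctly isolate the two roles of strong temperedness (absolute convergence of matrix-coefficient integrals over $H(F)$, and the identification $\mu_{G_Y}^{\PL}=\mu_G^{\PL}$), and you correctly locate the essential input as the explicit description of $\langle\cdot,\cdot\rangle_\pi$ in \cite[Theorem 6.2.1]{SV}, to be dualized through $p_\ell^\vee$ and evaluated at $(y_0,y_0)$.

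One caveat worth fixing before the identification you rely on can be made precise: the map $M$ as written is not $G(F)\times G(F)$-equivariant. With the right-translation action on $C^\infty(Y(F))^2$ and $(g_1,g_2)\cdot(v_1\otimes v_2)=\pi(g_1)v_1\otimes\pi^\vee(g_2)v_2$, equivariance forces the integrand to be $\langle\pi(g_2^{-1}hg_1)v_1,v_2\rangle$ rather than $\langle\pi(g_1^{-1}hg_2)v_1,v_2\rangle$; equivalently, the labeling must match the paper's definition of $p_\ell$, where the first factor of $\CS(Y(F))^2$ is sent to $\CS(Y(F))_{\bar\pi}$, not to $\CS(Y(F))_\pi$. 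Since $\mathrm{ev}_{y_0,y_0}$ is symmetric in the two coordinates this does not affect the final formula, but without the correction $M$ is not an equivariant map and therefore cannot literally coincide with $p_\ell^\vee$, which is the pivot of your argument.
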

 \subsection{Families of local periods}
We start with the rationality property of $Q_\pi$. For a smooth admissible representation $\pi$ over $E$, let $\CE(\pi)$ be the set of field embeddings $\tau:\ E\to\BC$ such that $\pi_\tau$ is irreducible and   appears in the Plancherel decomposition of $L^2(Y(F))$.
\begin{conj}[Rationality $Q$]\label{Rationality Q}  Let $\pi$ be a smooth admissible $G(F)$-representation  over $E$ with non-empty $\CE(\pi)$. 	Let $Q(\pi\otimes_E\pi^\vee):=Q_{(\pi\otimes_E\pi^\vee)^\star}(\pi\otimes_E\pi^\vee)$ where  $$(\pi\otimes_E\pi^\vee)^\star=\{\sum_i v_i\otimes v^i\in\pi\otimes\pi^\vee\mid \sum v^i(v_i)\neq0\}$$
	Then there exists a unique $E$-linear map $$Q_\pi:\
	Q(\pi\otimes_E\pi^\vee)\to E$$
	such that for any $\tau\in\CE(\pi)$,  the following diagram commutes
	$$\xymatrix{ Q(\pi\otimes_E\pi^\vee) \ar[d]^{\tau} \ar[r]^{\quad   Q_\pi} & E \ar[d]^{\tau} \\
		Q(\pi_\tau\otimes_{\BC}\pi_\tau^\vee) \ar[r]^{\quad Q_{\pi_\tau}}  & \BC.}$$
\end{conj}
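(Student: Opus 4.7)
I would realize the sought-for $Q_\pi$ as a ratio $\alpha/\ell$, where $\ell\in\Hom_{G(F)}(\pi\otimes_E\pi^\vee,E)$ is the tautological $G$-invariant pairing (unique up to $E^\times$ since $\pi_\tau$ is absolutely irreducible for any $\tau\in\CE(\pi)$), and $\alpha\in\Hom_{H(F)\times H(F)}(\pi\otimes_E\pi^\vee,E)$ is an $E$-rational bi-$H(F)$-invariant pairing whose base change along each $\tau\in\CE(\pi)$ equals $P_{\pi_\tau,\ell_\tau}$. Given such an $\alpha$, Proposition \ref{SRP} immediately yields $\tau(\alpha(v)/\ell(w))=P_{\pi_\tau,\ell_\tau}(v_\tau)/\ell_\tau(w_\tau)=Q_{\pi_\tau}(v_\tau/w_\tau)$, so setting $Q_\pi(v/w):=\alpha(v)/\ell(w)$ solves the problem. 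Uniqueness is automatic once $\CE(\pi)\neq\emptyset$: if two $E$-linear candidates agree after applying one $\tau_0\in\CE(\pi)$, injectivity of $\tau_0$ on $E$ forces equality.

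\textbf{Rationality of matrix-coefficient integrals.} Assume the paper's standing hypotheses ($Y$ homogeneous affine, strongly tempered, without type-$N$-roots, $G$ split). Fix $v\otimes w\in\pi\otimes_E\pi^\vee$. The Cartan decomposition $G(F)=\bigsqcup_{a\in\Lambda^+}Kt_aK$ combined with Proposition \ref{SRP} writes the complex-valued target as the absolutely convergent sum
\[P_{\pi_\tau,\ell_\tau}(v\otimes w)=\sum_{a\in\Lambda^+}c_a\,\ell_\tau(\pi_\tau(t_a)v,w),\qquad c_a\in\BQ_{\ge 0}.\]
I would introduce the multivariable formal series
\[F_{v,w}(T)=\sum_{a\in\Lambda^+}c_a\,\ell(\pi(t_a)v,w)\,T^a\in E[[T]]\]
and show that it represents a rational function whose denominator can be chosen independently of $(v,w)$. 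In the one-dimensional case the mechanism is that of the introductory triple-product sketch: admissibility of the Jacquet module $J_{N_\Theta}(\pi)$ produces a nonzero polynomial $P(X)\in E[X]$ annihilating the action of $\pi(t_1)$ on $J_{N_\Theta}(\pi)^K$ for $K$ small enough; Casselman's asymptotic theorem guarantees that vectors in $\pi(N_\Theta)$ contribute only finitely many nonzero terms; together these force $P(T^{-1})F_{v,w}(T)\in E[T]$. Higher-rank cones are handled inductively by peeling off faces one at a time along a reduction structure, as in the $\GL_3\times\GL_2$ sketch.

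\textbf{Specialization and verification.} Define $\alpha(v\otimes w)$ as the evaluation of the rational function $F_{v,w}(T)$ at $T=(q,\ldots,q)$. Choosing any $\tau_0\in\CE(\pi)$, the absolute convergence of $P_{\pi_{\tau_0},\ell_{\tau_0}}$ forces $\tau_0(P(q^{-1}))\neq 0$, hence $P(q^{-1})\neq 0$ in $E$, and the same polynomial $P$ works uniformly in $(v,w)$. By construction $\tau(\alpha(v\otimes w))$ is the absolutely convergent value $P_{\pi_\tau,\ell_\tau}(v_\tau\otimes w_\tau)$ for every $\tau\in\CE(\pi)$, and bi-$H(F)$-invariance of $\alpha$ descends from that of each specialization via injectivity of $\tau_0$.

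\textbf{Main obstacle.} The decisive difficulty is establishing rationality of the multivariable series $F_{v,w}(T)$. The one-dimensional case is a clean application of Casselman plus Jacquet-module admissibility, but the inductive passage to higher-dimensional cones requires a combinatorial compatibility between the Cartan decomposition of $G(F)$, the spherical structure of $Y$, and the asymptotic directions of matrix coefficients. Such a compatibility is neither automatic nor easy to formulate in general; isolating it is the role of \emph{reduction structures}, whose existence must be checked case by case from the classification of strongly tempered spherical $G$-varieties without type-$N$-roots. This case-by-case verification, together with the exclusion of type-$N$-roots (which would otherwise produce obstructions to the reduction), is where the hypotheses of the main theorem fundamentally enter.
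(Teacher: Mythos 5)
Your overall strategy matches the paper's: write the canonical period as a ratio $\alpha/\ell$, express $\alpha$ via the Cartan decomposition as a formal power series in variables attached to the simple roots of $H$, use admissibility of Jacquet modules (Theorem \ref{admjac}) and Casselman's asymptotics (Corollary \ref{Asym}) with the reduction-structure induction to show the series is a rational function, and then evaluate. The uniqueness argument via injectivity of a single $\tau_0$ is correct and is essentially what the paper uses. Your identification of the main combinatorial obstacle (existence of reduction structures, checked case-by-case from the \cite{WZ21} classification) also aligns with Theorem \ref{crucial}.

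However, there is a genuine gap in the evaluation step. You claim that absolute convergence of $P_{\pi_{\tau_0},\ell_{\tau_0}}$ forces $\tau_0(P(q^{-1}))\neq 0$, where $P$ is the polynomial annihilating $\pi(t_1)$ on the Jacquet module. This does not follow: absolute convergence of a power series $\sum c_n T^n = Q(T)/P(T^{-1})$ at $T=q$ only tells you the rational function has no pole there; it does not preclude $P(q^{-1})=0$ with $Q(q)=0$ cancelling it. Indeed, $P$ need not be the minimal annihilator, so it can acquire extra roots arbitrarily. The paper's Lemma \ref{eva} makes precisely the weaker (and correct) statement: the one-variable series $F_{\Theta_H,v,\wt{v}}(S)$ is \emph{regular} at $S=1$, so $\ord_{S=1}Q\geq\ord_{S=1}P$, and the evaluation is then extracted in Theorem \ref{contro} by comparing those orders ($F(1)=b_r/a_r$, where $r$ is the order of vanishing of the denominator). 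Your assertion $P(q^{-1})\neq 0$ would require either taking $P$ to be minimal and then proving an eigenvalue bound of the type in Lemma \ref{eigenvalue} (which the paper only invokes in the ``moreover'' part of Theorem \ref{contro}, under the extra rigid-discrete-support hypothesis, for the meromorphy), or replacing your argument with the order-comparison argument. For the rationality statement over a single $\pi$ the order comparison suffices and is what the paper does; without it your evaluation of $\alpha(v\otimes w)$ is not yet well-defined.

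One smaller point: the paper also performs the auxiliary reduction from the multivariable series $\wt{F}_{\Theta_H,v,\wt{v}}(T)$ to a one-variable series $F_{\Theta_H,v,\wt{v}}(S)$ via the substitution $T_\alpha=q^{N_\alpha}S$, $T_\beta=S$ (Corollary \ref{formal power cor}), precisely so that the convergence/regularity argument takes place in one variable. You evaluate the multivariable series directly at $T=(q,\ldots,q)$; this is fixable but obscures why absolute convergence of the original integral controls the pole structure of the rational function. Incorporating the one-variable reduction, together with the order-comparison argument, closes the gap.
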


Assuming the rationality conjecture, we now discuss the behavior of $Q_\pi$  when $\pi$ varies in families.
\begin{conj}[Meromorphy Q]\label{local-conj} Let $\pi$, $\wt{\pi}$ be  finitely generated smooth admissible  torsion-free $\CO_X[G(F)]$-modules such that  for any $x\in\Sigma$,  $\CE(\pi|_x)\neq\emptyset$ and $\wt{\pi}|_x \cong (\pi|_x)^\vee$.
Let
 $(\pi \otimes \wt{\pi})^{\star}\subset \pi\otimes\wt{\pi}$ be the subsheaf such that
	for any open subset $U\subset X$,
	$$(\pi \otimes \wt{\pi})^{\star}(U)=\{s\in (\pi\otimes\wt{\pi})(U)\mid s(x)\in (\pi\otimes\wt{\pi})|_x^{\star},\quad \forall\ x\in \Sigma\cap U\}$$
	and set $Q(\pi\otimes\wt{\pi}):=Q_{(\pi\otimes\wt{\pi})^\star}(\pi\otimes\wt{\pi})$.
	Then the family $\{Q_{\pi|_x}:\ Q(\pi\otimes\wt{\pi})|_x\to k(x)\}_{x\in\Sigma}$ is meromorphic.
\end{conj}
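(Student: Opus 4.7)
The plan is to reduce the meromorphy of the family $\{Q_{\pi|_x}\}_{x\in\Sigma}$ to the rationality of certain formal power series with coefficients in $R := \CO_X(X)$ (working locally on $X$, then sheafifying). Since $Y$ is strongly tempered, Proposition \ref{SRP} expresses $Q_{\pi|_x}(v/w)$ as an absolutely convergent integral of a matrix coefficient of $\pi|_x$ over $H(F)$. Using the Cartan decomposition of $H(F)$ together with the explicit volume formula for double cosets, this integral becomes a sum, indexed by antidominant cocharacters $\lambda \in X_*(A_H)^-$ of a maximal split torus $A_H \subset H$, of the matrix coefficients $\langle \pi(t_\lambda) v, \wt{v}\rangle \in R$ weighted by explicit powers of $q$. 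Assembling these coefficients into a formal multivariable series
\[F_{v,\wt{v}}(T_1,\ldots,T_r) \;=\; \sum_{\lambda} \langle \pi(t_\lambda) v, \wt{v}\rangle\, T^{\lambda},\]
the task reduces to showing that $F_{v,\wt{v}}$ represents an element of a localization $R[T_1^{\pm},\ldots,T_r^{\pm}]_{S}$ whose evaluation at $(T_i = q^{a_i})$ is well-defined and whose specialization at each $x \in \Sigma$ recovers the canonical period.

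For the rationality of $F_{v,\wt{v}}$, I would combine the admissibility of Jacquet modules over $R$ (Theorem \ref{admjac}) with Casselman's canonical pairing (Corollary \ref{Asym}). Fix a minimal parabolic $P_{\emptyset_G} = M_{\emptyset_G} N_{\emptyset_G} \subset G$; for $\Theta_G \subsetneq \Delta_G$ and $v \in \pi(N_{\Theta_G})$, the canonical pairing shows that $\langle \pi(t_\lambda) v, \wt{v}\rangle = 0$ once $|\alpha(t_\lambda)|$ is sufficiently small for some $\alpha \in \Delta_G - \Theta_G$, which truncates the corresponding face-contribution of the series to a polynomial. On the other hand, the action of a chosen dominant torus element $t$ on the admissible Jacquet module $J_{N_{\Theta_G}}(\pi)^{K}$, finitely generated over $R$, satisfies a monic polynomial relation of the form $P(\pi(t)) v \in \pi(N_{\Theta_G})$; unfolding this relation gives $P(T^{-1})\, F_{v,\wt{v}}(\ldots, T, \ldots) \in R[T^{\pm}]$ modulo terms already handled by the truncation step. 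Combining both inputs yields rationality with an explicit denominator.

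The main obstacle is the combinatorial geometry linking the antidominant chamber of $H$ with that of $G$. Already in the $(\GL_3 \times \GL_2, \GL_2)$ case discussed in the introduction, the Cartan sum splits into cones of several dimensions, on some of which the $H$-antidominant direction fails to be $G$-antidominant. One must inductively reduce higher-dimensional cones to their boundary strata (where more $N$-coordinates become small, so that Casselman's asymptotics kick in), and one must transport ``$H$-antidominant but not $G$-antidominant'' cones into a dominant chamber by applying elements of the Weyl group of $G$. I would axiomatize the data needed to run this recursion into a notion of \emph{reduction structure} on $Y$ (the analogue of Definition \ref{controllable}), prove the meromorphy statement assuming $Y$ admits one (this is Theorem \ref{contro} in the paper), and then produce such structures case-by-case from the Sakellaridis-Venkatesh--Wan-Zhang classification; the absence of type $N$-roots should be exactly what makes the recursion close, since type $N$-roots are what obstruct the Weyl-group transport step.

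Finally, the evaluation step: once $F_{v,\wt{v}}$ is known to be rational with denominator $D(T)$, one obtains the sought interpolation on the Zariski-open set $X' \subset X$ where $D(q^{a_1},\ldots,q^{a_r})|_x \ne 0$. The absolute convergence of $Q_{\pi|_x}$ at each $x \in \Sigma$ guarantees that $\Sigma \subset X'$ for a careful choice of the polynomial $D$, giving the first (``shrinking'') assertion of the theorem. Upgrading the interpolation to all of $X$ requires matching $D$ to the action of specific torus elements on the discrete support of $\pi|_x$, which is achievable precisely when that support is rigid around the points of $\Sigma$ in the sense of Definition \ref{locally twisting}; this explains the dichotomy in the final statement.
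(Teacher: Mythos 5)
Your proposal follows essentially the same route as the paper: expand $Q_{\pi|_x}$ via the Cartan decomposition and volume formula, assemble the matrix coefficients into formal power series over antidominant cones, combine Jacquet-module admissibility over $R$ with Casselman's canonical-pairing asymptotics to force rationality, axiomatize the cone recursion and Weyl-transport into ``reduction structures,'' produce them case-by-case from the Wan--Zhang classification, and invoke rigidity of the discrete support for the final evaluation step. Two small inaccuracies worth noting: Casselman's pairing annihilates $\langle \pi(a)v,\wt{v}\rangle$ when $|\alpha(a)|<\epsilon$ for \emph{all} $\alpha\in\Delta_G-\Theta_G$ (not ``for some''), and the polynomial $P$ with $P(\pi(s))v\in\pi(w^{-1}N_\Theta w)$ need not be monic; what is used is that both its leading and constant coefficients are units in $R$.
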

\section{Strongly tempered spherical varieties without type $N$-roots}\label{type N}

In this section, we  prove  Conjectures \ref{conj-2} for strongly tempered spherical varieties without type $N$-roots using the admissibility of Jacquet modules and the asymptotic behaviour of matrix coefficients. We shall assume $X = \Spec(R)$ where
$R$ is a reduced $E$-algebra of finite type.

We need some preliminary results. 

Firstly, we record the  admissibility of Jacquet modules established in \cite[Corollary 1.5]{DHK22}.
\begin{thm}\label{admjac}Let $P=MN\subset G$ be any parabolic subgroup with Levi factor $M$. Then  for any smooth admissible 
$R[G(F)]$-module $\pi$, the Jacquet module $$J_N(\pi):=\pi/ \pi(N),\quad \pi(N):=\langle\pi(n)v-v|v\in\pi, n\in N(F)\rangle\subset \pi$$
is a smooth admissible $R[M(F)]$-module.
\end{thm}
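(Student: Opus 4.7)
The plan is to verify smoothness directly and to deduce admissibility from the classical Jacquet--Casselman machinery, checking that all averaging arguments transfer to the $R$-linear setting because $R$ is a $\BQ$-algebra (recall $E$ embeds into $\BC$).

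Smoothness of $J_N(\pi)$ is immediate: any $\bar{v} \in J_N(\pi)$ lifts to some $v \in \pi$ which is fixed by a compact open $K \subset G(F)$, so $\bar{v}$ is fixed by $K \cap M(F)$. For admissibility, since $R$ is Noetherian, $R$-submodules of finitely generated $R$-modules are again finitely generated, and $J_N(\pi)^{L'} \subset J_N(\pi)^{L}$ whenever $L \subset L'$; hence it suffices to exhibit a cofinal family of compact open subgroups $L \subset M(F)$ for which $J_N(\pi)^L$ is finitely generated. Fix a good maximal compact $K_0 \subset G(F)$ and a decreasing neighborhood basis $\{K_n\}$ of the identity inside $K_0$, each $K_n$ admitting an Iwahori factorization $K_n = K_n^- K_n^0 K_n^+$ relative to $P = MN$ (so $K_n^\pm \subset \bar{N}(F), N(F)$ and $K_n^0 := K_n \cap M(F)$); the $K_n^0$ then form a cofinal system in $M(F)$.

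The core technical step is the following $R$-linear form of Jacquet's lemma: for each $n$, there exists $N' \geq n$ such that the composite
\[
\pi^{K_{N'}^- K_n^0 K_{N'}^+} \hookrightarrow \pi^{K_n^0} \twoheadrightarrow J_N(\pi)^{K_n^0}
\]
is surjective. Given $v \in \pi^{K_n^0}$, smoothness of $\pi$ provides $N' \geq n$ such that $v$ is fixed by $K_{N'}^-$ and by some smaller $K_m^+ \subset K_{N'}^+$. The averaged element
\[
v' := \frac{1}{\Vol(K_{N'}^+)} \int_{K_{N'}^+} \pi(u) v \, du
\]
is well-defined in $\pi$ since $\Vol(K_{N'}^+) \in \BQ^{\times}$ is invertible in $R$, and equals $v$ modulo $\pi(N)$ because each integrand differs from $v$ by an element of $\pi(N)$. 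By the standard Casselman calculation --- using that $K_{N'}^-$ already fixes $v$, that $K_n^0$ normalizes $K_{N'}^+$, and an Iwasawa-type rewriting $ku = u'k'$ that absorbs a small $K_{N'}^-$-translation into a $K_{N'}^+$-translation after possibly enlarging $N'$ --- one checks $v' \in \pi^{K_{N'}^- K_n^0 K_{N'}^+}$.

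Granted this lemma, admissibility of $\pi$ implies $\pi^{K_{N'}^- K_n^0 K_{N'}^+}$ is finitely generated over $R$, whence its quotient $J_N(\pi)^{K_n^0}$ is as well, completing admissibility of $J_N(\pi)$. The main obstacle is the $R$-linear verification that the averaged $v'$ is actually fixed by $K_{N'}^-$: this is a non-trivial Iwahori-factorization calculation exploiting the contraction/expansion properties of positive elements of $A^{++}$, but the classical argument goes through essentially verbatim because all arithmetic input reduces to dividing by volumes in $\BQ^{\times}$ and by indices of compact open subgroups in one another --- all of which are invertible in the $\BQ$-algebra $R$.
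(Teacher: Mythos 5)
The paper does not prove this statement itself; Theorem~\ref{admjac} is recorded as a direct citation of \cite[Corollary 1.5]{DHK22}, whose proof goes through second adjunction and the Noetherian finiteness of Hecke algebras. Your proposal instead attempts to push Casselman's classical Jacquet lemma through over $R$, and this is where the gap lies.

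The central unjustified step is the claim that ``for each $n$, there exists $N'\geq n$ such that $\pi^{K_{N'}^- K_n^0 K_{N'}^+}\twoheadrightarrow J_N(\pi)^{K_n^0}$.'' What your averaging construction actually produces is, for each individual class $\bar v\in J_N(\pi)^{K_n^0}$, an integer $N'(\bar v)$ depending on $\bar v$ (it is dictated by the level of $\bar N$-smoothness of a chosen lift). Letting $V_{N'}$ denote the image of $\pi^{K_{N'}^- K_n^0 K_{N'}^+}$, this shows $J_N(\pi)^{K_n^0}=\bigcup_{N'}V_{N'}$, an increasing chain of finitely generated $R$-submodules. To conclude finite generation you would need this chain to stabilize, which over a Noetherian ring is equivalent to knowing in advance that $J_N(\pi)^{K_n^0}$ is finitely generated --- the very thing to be proved. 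Over a field the classical argument escapes this circle because $\pi^{K_0}$ is finite-dimensional and therefore Artinian: one iterates a Hecke operator built from a strictly dominant central element $a$, the descending chain of images stabilizes by DCC, and the stable image maps isomorphically onto $J_N(\pi)^{K_0^0}$. Finitely generated modules over a Noetherian ring do not satisfy DCC, so this mechanism simply disappears; invertibility of volumes in the $\BQ$-algebra $R$ is necessary but far from sufficient. Relatedly, the assertion that the averaged $v'$ lies in $\pi^{K_{N'}^- K_n^0 K_{N'}^+}$ is also not the correct form of the classical lemma: in Casselman's argument one enforces $\bar N$-invariance by applying $\pi(a^m)$, which shifts the class in $J_N(\pi)$ by $\pi_N(a^m)$, and undoing that shift is exactly where the DCC-based stabilization enters. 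In short, the obstruction you gloss over is precisely why admissibility of Jacquet modules over general Noetherian coefficients was open until \cite{DHK22}, and your proof does not supply a substitute for their argument.
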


Secondly, we need the  following  consequence of Casselman's canonical pairing for $G(F)$-representations.   Fix a minimal parabolic subgroup $P_{\emptyset_G}=M_{\emptyset_G} N_{\emptyset_G}\subset G$. Let $A_{\emptyset_G}\subset M_{\emptyset_G}$ be a maximal split torus  and $\Delta_G$ be the set of simple roots with respect to $(P_{\emptyset_G}, A_{\emptyset_G})$.  Let
\[A_{\emptyset_G}^-:=\{a\in A_{\emptyset_G}(F)|\ |\alpha(a)|\leq 1\ \forall\ \alpha\in \Delta_G\}.\]
Now we record Casselman's canonical pairing theorem \cite[Theorem 4.3.3]{Cas95}:
\begin{thm}\label{cano}Let $\pi$ be a smooth admissible $G(F)$-representation over a characteristic zero field $K$. Then for any $\Theta_G\subset \Delta_G$, there exists a $M_{\Theta_G}$-invariant non-degenerate pairing 
$$(-,-)_{N_{\Theta_G}}:\ J_{N_{\Theta_G}}(\pi)\times J_{N^-_{\Theta_G}}(\pi^\vee)\to K $$
and a constant $\epsilon>0$ such that  for any $a\in A_{\emptyset_G}^-$ with $|\alpha(a)|<\epsilon$ for all $\alpha\in \Delta_G-\Theta_G$, $$ (\pi(a)v,v^\vee)=(\pi(a)v,v^\vee)_{N_{\Theta_G}},\quad \forall\ v\in\pi, v^\vee\in\pi^\vee.$$
Here $(-,-)$ is the contraction on $\pi\times\pi^\vee$, $N_{\Theta_G}^-$ is the unipotent subgroup opposite to $N_{\Theta_G}$  and in the right hand side, $\pi(a)v$ and $v^\vee$ are viewed as elements in $J_{N_{\Theta_G}}(\pi)$ and $J_{N^-_{\Theta_G}}(\pi)$ respectively.
\end{thm}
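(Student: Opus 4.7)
The plan is to invoke Jacquet's lemma to reduce the matrix coefficient $(\pi(a)v, v^\vee)$, for sufficiently contracted $a$, to a function of the images of $v$ and $v^\vee$ in the respective Jacquet modules; this common value will then \emph{define} the canonical pairing. Fix $\Theta_G$ and abbreviate $P = P_{\Theta_G} = MN$ with opposite $P^- = MN^-$, where $M = M_{\Theta_G}$. By Bruhat-Tits theory there is a neighborhood basis of the identity in $G(F)$ consisting of compact open subgroups $J$ admitting an Iwahori factorization $J = J^- \cdot J_M \cdot J^+$ with $J^\pm = J \cap N^\pm(F)$ and $J_M = J \cap M(F)$. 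For $a \in A_{\emptyset_G}^-$ with $|\alpha(a)|$ sufficiently small for all $\alpha \in \Delta_G - \Theta_G$, one has the absorption properties $a J^+ a^{-1} \subsetneq J^+$ and $a^{-1} J^- a \subsetneq J^-$.

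The first crucial input is Jacquet's lemma, applied to both $\pi$ and $\pi^\vee$: the natural map $\pi^J \twoheadrightarrow J_N(\pi)^{J_M}$ is surjective, and moreover there exists $\epsilon_J > 0$ such that for any $v \in \pi^J \cap \pi(N)$ and $a$ contracted below $\epsilon_J$, $\pi(a) v = 0$; dually the same holds for $\pi^\vee$ with respect to $P^-$. Consequently, for such $a$ and arbitrary $v \in \pi^J$, $v^\vee \in (\pi^\vee)^J$, the matrix coefficient $(\pi(a)v, v^\vee)$ depends only on the classes of $v$ and $v^\vee$ in $J_N(\pi)^{J_M}$ and $J_{N^-}(\pi^\vee)^{J_M}$ respectively. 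I would define $(\bar v, \overline{v^\vee})_N$ as this common value, verify independence of the auxiliary $a$ by comparing two contracting elements via their product, and show compatibility as $J$ shrinks by taking colimits. The bi-$M$-invariance then follows from the explicit formula together with the fact that $M$ normalizes $J^\pm$ for a cofinal system of $J$, and the matching statement $(\pi(a)v, v^\vee) = (\pi(a)v, v^\vee)_N$ is built into the construction.

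The main obstacle is non-degeneracy of the resulting pairing, which is essentially Casselman's second adjointness theorem asserting $J_{N^-}(\pi^\vee) \cong (J_N(\pi))^\vee$ (up to a twist by $\delta_P$). My approach would be to reduce to finite-length admissible $\pi$ via Bernstein's decomposition of the smooth category, then use exactness of the Jacquet functor together with a composition-series comparison: any nonzero class in $J_N(\pi)$ lifts to some $v \in \pi^J$ producing a nonzero value $(\pi(a)v, v^\vee)$ via the original non-degenerate pairing on $\pi \times \pi^\vee$, and this non-vanishing survives the projection to the Jacquet quotient by the matching step above. The technically most delicate point throughout is the precise tracking of modulus characters (in particular the factor of $\delta_P^{1/2}$) to ensure that the pairing is genuinely $M$-invariant rather than invariant up to an unramified twist; this bookkeeping is what pins down Casselman's canonical normalization.
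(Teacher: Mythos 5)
The paper does not prove this theorem; it records it as Casselman's canonical pairing theorem, citing \cite[Theorem 4.3.3]{Cas95}. Your proposal is therefore being compared against Casselman's original argument, not a proof in the paper.

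Your strategy (Iwahori-factorizable $J$, Jacquet's lemma applied to $\pi$ along $N$ and to $\pi^\vee$ along $N^-$, constructing the pairing at the level of $J$-invariants) is the right one and matches Casselman's. However, your construction of the pairing has a genuine error that your proposed fix would not repair. You observe, correctly, that for a fixed sufficiently contracting $a$ the value $(\pi(a)v,v^\vee)$ depends only on the Jacquet-module images $\bar v \in J_N(\pi)^{J_M}$ and $\overline{v^\vee} \in J_{N^-}(\pi^\vee)^{J_M}$, producing a bilinear form $B_a$; you then set $(\bar v,\overline{v^\vee})_N := B_a(\bar v,\overline{v^\vee})$ and say you will ``verify independence of the auxiliary $a$.'' But $B_a$ does depend on $a$: already for an unramified principal series with $v,v^\vee$ spherical, $B_a(\bar v,\overline{v^\vee})=(\pi(a)v,v^\vee)$ is the Harish-Chandra spherical function, which is not constant in $a$. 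The theorem does not assert $a$-independence of $B_a$; it asserts $(\pi(a)v,v^\vee)=(\overline{\pi(a)v},\overline{v^\vee})_N$, with $a$ also acting inside the Jacquet module on the right-hand side. The correct definition exploits that $a$ acts invertibly on the finite-dimensional space $J_N(\pi)^{J_M}$ (admissibility): put $(\bar u,\overline{v^\vee})_N := B_a\bigl(J_N(\pi(a))^{-1}\bar u,\overline{v^\vee}\bigr)$. Once this $a^{-1}$ is inserted, your ``compare $a$ and $a'$ via their product'' argument does give independence of $a$, the colimit compatibility as $J$ shrinks goes through, and the matching statement is built in. As written, your $(\cdot,\cdot)_N$ is ill-defined and the proof does not close. (Your non-degeneracy sketch is roughly right, but the detour through Bernstein decomposition and finite length is unnecessary: admissibility alone gives $\dim J_N(\pi)^{J_M}<\infty$, and non-degeneracy can be verified directly on these finite-dimensional $J_M$-invariant blocks.)
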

The following immediate consequence is crucial for our strategy:
\begin{cor}\label{Asym}
Let $\pi$ be a finitely generated smooth admissible $R[G(F)]$-module satisfying the assumptions in Conjecture \ref{conj-2}. Then for   any $\Theta_G\subset \Delta_G$,  any $v\in\pi(N_{\Theta_G})$ and any $\wt{v}\in \wt{\pi}$, there exists $\epsilon>0$ such that  for any $a\in A_{\emptyset_G}^-$ with $|\alpha(a)|<\epsilon$ for all $\alpha\in \Delta_G-\Theta_G$, $\langle \pi(a)v,\tilde{v} \rangle=0$.
\end{cor}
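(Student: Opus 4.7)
The plan is to give a direct elementary proof that bypasses invoking Casselman's theorem (Theorem~\ref{cano}) in the $R$-module setting. Although the excerpt advertises the corollary as a consequence of Casselman's canonical pairing theorem, Theorem~\ref{cano} is stated only over a field, so a fibrewise application would a priori produce an $\epsilon$ depending on the specialization $x\in\Sigma$. An elementary argument using the smoothness of $\wt{\pi}$ together with the contracting action of $A_{\emptyset_G}^-$ on $N_{\Theta_G}$ yields instead a \emph{uniform} $\epsilon$ directly at the level of the $R[G(F)]$-module $\pi$.

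First I would fix an open compact subgroup $K\subset G(F)$ fixing $\wt{v}$, and set $K_N:=K\cap N_{\Theta_G}(F)$. Since by definition $\pi(N_{\Theta_G})$ is the $R$-submodule of $\pi$ generated by the elements $\pi(n)w-w$, absorbing scalars into the vectors allows one to write any $v\in\pi(N_{\Theta_G})$ as a finite sum
\[ v = \sum_{i=1}^{m}\bigl(\pi(n_i)v_i - v_i\bigr),\qquad n_i\in N_{\Theta_G}(F),\ v_i\in\pi. \]
The next observation is purely root-theoretic: every positive root $\beta$ occurring in the Lie algebra of $N_{\Theta_G}$ has strictly positive support on $\Delta_G-\Theta_G$, so $|\beta(a)|$ can be made arbitrarily small by shrinking $|\alpha(a)|$ for $\alpha\in\Delta_G-\Theta_G$. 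Hence one can choose a single $\epsilon>0$ small enough that whenever $a\in A_{\emptyset_G}^-$ satisfies $|\alpha(a)|<\epsilon$ for all $\alpha\in\Delta_G-\Theta_G$, every conjugate $an_ia^{-1}$ lies in $K_N$.

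For such an $a$, $R$-linearity of the pairing gives
\[ \langle\pi(a)v,\wt{v}\rangle = \sum_{i=1}^{m}\Bigl[\bigl\langle\pi(an_ia^{-1})\pi(a)v_i,\,\wt{v}\bigr\rangle - \bigl\langle\pi(a)v_i,\,\wt{v}\bigr\rangle\Bigr]. \]
Since each $an_ia^{-1}\in K_N\subset K$ fixes $\wt{v}$, the two terms inside each bracket are equal by $G(F)$-invariance of $\langle-,-\rangle$, so the whole sum vanishes.

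I do not anticipate a serious obstacle. The only mildly delicate point is ensuring that $\pi(N_{\Theta_G})$ is genuinely $R$-spanned (not merely additively generated) by commutators $\pi(n)w-w$, so that a \emph{single} finite expression for $v$ with coefficients absorbed into the $v_i$'s exists; this is implicit in the $R$-submodule convention used in Theorem~\ref{admjac}. The resulting $\epsilon$ depends only on the finitely many $n_i$ and on $K$, and not on any specialization, which is precisely why the vanishing descends to the level of $R$ without assuming $R$ is a field.
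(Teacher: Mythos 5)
Your proof is correct, and it takes a genuinely different route from the paper. The paper deduces Corollary~\ref{Asym} by first using torsion-freeness to embed $\pi$ into $\prod_\eta \pi|_\eta$ over the finitely many generic points $\eta$, then invoking Lemma~\ref{irre} to get irreducibility of $\pi|_\eta$, Lemma~\ref{MVWdual} to identify $\wt{\pi}|_\eta$ with $(\pi|_\eta)^\vee$, and finally Casselman's canonical pairing theorem (Theorem~\ref{cano}) over each residue field $k(\eta)$; the uniformity of $\epsilon$ then comes from the fact that there are only finitely many $\eta$. You instead give a direct, elementary argument at the level of the $R$-module itself: write $v=\sum_i(\pi(n_i)v_i-v_i)$, choose $K$ fixing $\wt{v}$, and observe that for $a$ sufficiently contracting all conjugates $an_ia^{-1}$ land in $K\cap N_{\Theta_G}(F)$, whence each bracket term $\langle\pi(an_ia^{-1})\pi(a)v_i,\wt{v}\rangle-\langle\pi(a)v_i,\wt{v}\rangle$ vanishes by $G(F)$-invariance and smoothness of $\wt{v}$. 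This is precisely the ``easy half'' of Casselman's asymptotic analysis, and it is all that is needed here; the full canonical pairing theorem is substantially stronger.

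Your approach has real advantages worth noting. It avoids entirely the torsion-free hypothesis, the irreducibility of $\pi|_\eta$, and the identification $\wt{\pi}|_\eta\cong(\pi|_\eta)^\vee$, so it proves the statement under weaker hypotheses and in a self-contained way. It also makes manifest that $\epsilon$ depends only on the finitely many $n_i$ and on $K$, hence is uniform over $\Spec(R)$ by construction rather than by a reduction argument. Your closing worry about whether $\pi(N_{\Theta_G})$ is $R$-spanned by the generators is correctly dismissed: since $r\bigl(\pi(n)w-w\bigr)=\pi(n)(rw)-(rw)$, the additive span and the $R$-span coincide. The only minor point I would add for completeness is the justification that $an_ia^{-1}\to e$ as $|\alpha(a)|\to 0$ for $\alpha\in\Delta_G-\Theta_G$ (with $|\alpha(a)|\le 1$ on $\Theta_G$): in a root-group parametrization $n_i=\prod_\beta u_\beta(x_\beta)$ of $N_{\Theta_G}(F)$, conjugation by $a$ scales the coordinate $x_\beta$ by $\beta(a)$, and since every such $\beta$ has strictly positive support on $\Delta_G-\Theta_G$, one has $|\beta(a)|\le\epsilon$ once $|\alpha(a)|<\epsilon$ for all $\alpha\in\Delta_G-\Theta_G$; continuity then forces $an_ia^{-1}$ into any prescribed neighborhood of the identity for $\epsilon$ small. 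This is standard and you were right to treat it as purely root-theoretic.
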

\begin{proof}By Lemma \ref{irre} and Lemma \ref{MVWdual},
it suffices to prove the statement for the contraction
$$\pi|_\eta\times (\pi|_\eta)^\vee \to k(\eta)$$
for the finitely many generic points $\eta\in X=\Spec(R)$. This immediately follows from  Theorem \ref{cano}.
\end{proof}

Thirdly, we need the Cartan decomposition for $H$. Let $P_{\emptyset_H}=M_{\emptyset_H} N_{\emptyset_H}\subset H$ be a minimal parabolic subgroup. Let $A_{\emptyset_H}\subset M_{\emptyset_H}$ be a maximal split torus  and $\Delta_H$ be the set of simple roots with respect to $(P_{\emptyset_H}, A_{\emptyset_H})$.  For any  $\Theta_H\subset\Delta_H$, denote by
$$ A_{\Theta_H}:=\{x\in A_{\emptyset_H}|\ \alpha(x)=1\quad \forall\ \alpha\in \Theta_H\}.$$
Set  $\CT_{\Theta_H}^{?}:= A_{\Theta_H}^{?}/A_{\Theta_H}^\circ$ for $?=-$,$--$, where
$$A_{\Theta_H}^-:=\{x\in A_{\Theta_H}(F)|\  |\alpha(x)|\leq 1 \quad \forall\ \alpha\in \Delta_H-\Theta_H\};$$
 $$A_{\Theta_H}^{--}:=\{x\in A_{\Theta_H}(F)|\  |\alpha(x)|< 1 \quad \forall\ \alpha\in \Delta_H-\Theta_H\};$$
 $$A_{\Theta_H}^\circ:=\bigcap_{\alpha \in \Rat(A_{\Theta_H})}\ker|\alpha|=A_{\Theta_H}(\CO_F).$$
 Here, $\Rat(A_{\Theta_H})$ denotes the set of rational characters on $A_{\Theta_H}$.

\begin{thm}[Cartan Decomposition]\label{Cartan}There exists a maximal compact open subgroup $K_H\subset H(F)$, a finite subset $\omega\subset M_{\emptyset_H}$ normalizing $K_H$ such that $$H(F)=\bigsqcup_{w\in \omega, t\in\CT_{\emptyset_H}^-}K_H w t K_H.$$
Moreover, if the center $Z_H$ of $H$ is anisotropic or $H$ is unramified, then one can choose $\omega=\{e\}$.
\end{thm}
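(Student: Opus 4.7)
\medskip
\noindent\textbf{Proof plan.} This is the classical Cartan decomposition for reductive groups over non-archimedean local fields, and the strategy is to extract it from Bruhat-Tits theory. I plan to realize $K_H$ as the stabilizer of a special vertex in the Bruhat-Tits building, reduce to the translation lattice of the corresponding apartment, and use the vectorial Weyl group to cut down to a closed chamber.

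\medskip
First, let $\CB(H)$ denote the Bruhat-Tits building of $H(F)$ and $\CA$ the apartment attached to $A_{\emptyset_H}$. Pick a special vertex $x_0\in\CA$ and set $K_H:=\mathrm{Stab}_{H(F)}(x_0)$; general Bruhat-Tits theory guarantees $K_H$ is a maximal compact open subgroup. Since $H(F)$ acts transitively on apartments of $\CB(H)$ through $x_0$, one has $H(F)=K_H\cdot N_H(A_{\emptyset_H})(F)\cdot K_H$, and because $x_0$ is special the finite Weyl group $W_0:=N_H(A_{\emptyset_H})(F)/M_{\emptyset_H}(F)$ is represented by elements of $K_H$, so this collapses to
\[
H(F)\;=\;K_H\cdot M_{\emptyset_H}(F)\cdot K_H.
\]

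\medskip
Second, I analyze the translation lattice $\Lambda_M:=M_{\emptyset_H}(F)/\bigl(M_{\emptyset_H}(F)\cap K_H\bigr)$. Because $M_{\emptyset_H}/A_{\emptyset_H}$ is anisotropic, $M_{\emptyset_H}(F)\cap K_H$ is the unique maximal compact subgroup of $M_{\emptyset_H}(F)$, normal in $M_{\emptyset_H}(F)$, and $\Lambda_M$ is a finitely generated abelian group of rank $\dim A_{\emptyset_H}$. The inclusion $A_{\emptyset_H}(F)\hookrightarrow M_{\emptyset_H}(F)$ yields an injection $A_{\emptyset_H}(F)/A_{\emptyset_H}^{\circ}\hookrightarrow\Lambda_M$ with finite cokernel; let $\omega\subset M_{\emptyset_H}(F)$ be a finite lift of a system of coset representatives. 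The normality of $M_{\emptyset_H}(F)\cap K_H$ in $M_{\emptyset_H}(F)$ allows $\omega$ to be chosen to normalize $K_H$. Using the $W_0$-action on $\Lambda_M$ (implemented by elements of $K_H$) one cuts down to the dominant chamber $\CT^-_{\emptyset_H}$ and obtains the claimed covering; disjointness follows from the standard bijection between $K_H\backslash H(F)/K_H$ and the $W_0$-orbits on $\Lambda_M$, together with the classical parametrization of such orbits by dominant lattice points.

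\medskip
Third, for the ``moreover'' claim: when $H$ is unramified, $H$ extends to a smooth reductive $\CO_F$-group scheme and one verifies directly (using $K_H=H(\CO_F)$ and Lang's theorem on the special fiber of $M_{\emptyset_H}$) that the injection $A_{\emptyset_H}(F)/A_{\emptyset_H}^{\circ}\hookrightarrow\Lambda_M$ is an equality, so $\omega=\{e\}$ suffices. When $Z_H$ is anisotropic, $M_{\emptyset_H}(F)$ differs from $A_{\emptyset_H}(F)$ only by a compact group (swallowed by $M_{\emptyset_H}(F)\cap K_H$), again forcing $\Lambda_M=A_{\emptyset_H}(F)/A_{\emptyset_H}^{\circ}$. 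The main obstacle in each case is the precise identification of $M_{\emptyset_H}(F)\cap K_H$, which is handled by the structure theory of bounded subgroups in the Bruhat-Tits setting.
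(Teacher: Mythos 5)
The paper does not supply a proof of this theorem at all: it simply cites \cite[Section 4.4]{BT72}, \cite[Lemma 1.4.5]{Cas95}, and \cite[Section 1]{Cas12}. Your attempt to extract the statement directly from Bruhat-Tits theory is therefore doing real work, and steps one, the reduction to $H(F) = K_H M_{\emptyset_H}(F) K_H$, and the final reduction to the antidominant cone via the vectorial Weyl group are the standard route and are fine in outline.

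There is, however, a genuine gap at the point where you justify the normalization property. You write that ``the normality of $M_{\emptyset_H}(F)\cap K_H$ in $M_{\emptyset_H}(F)$ allows $\omega$ to be chosen to normalize $K_H$.'' That inference is not valid: $M_{\emptyset_H}(F)\cap K_H$ being normal in $M_{\emptyset_H}(F)$ is automatic (it is the unique maximal compact subgroup of $M_{\emptyset_H}(F)$) and tells you nothing about conjugation of $K_H$ itself. Indeed, any strictly antidominant $a\in A_{\emptyset_H}(F)$ normalizes $A_{\emptyset_H}(F)\cap K_H$ trivially yet certainly does not normalize $K_H$. The elements of $\omega$ you construct are lifts of nontrivial classes in the finite cokernel of $A_{\emptyset_H}(F)/A_{\emptyset_H}^{\circ}\hookrightarrow\Lambda_M$, hence act on the apartment by nonzero translation and in particular move $x_0$; so the claim that they fix (or even merely normalize) the stabilizer of $x_0$ requires an argument. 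The correct point, which is what the Bruhat-Tits references establish, is that those representatives can be chosen so that their translation component lies entirely in the directions coming from the split center $Z_H$; after passing to the reduced (non-extended) building such a representative fixes the image of $x_0$ and therefore normalizes $K_H$. This is exactly why the ``moreover'' clause singles out the two cases where the cokernel (respectively, the central translation component) collapses: $Z_H$ anisotropic and $H$ unramified. Spelling out why the cokernel is carried by the central directions, and pinning down the precise definition of $K_H$ (stabilizer in which building, intersected with what), is the content you are missing; as written, the sentence you offer does not close that gap.
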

\begin{proof}See \cite[Section 4.4]{BT72}. For the case $Z_H$ is isotropic,  see also \cite[Lemma 1.4.5]{Cas95}. The case $H$  unramified is also explained in  \cite[Section 1]{Cas12}.
    \end{proof}
Finally, we record the following volume formula.
\begin{prop}\label{Vol} For any $\Theta_H\subset \Delta_H$, there exists a constant $C_{\Theta_H}\in\BQ^\times$
	  such that
   \[\Vol(K_HtK_H)=C_{\Theta_H}\delta_{P_{\emptyset_H}}^{-1}(t),\quad \forall \ t\in \CT_{\Theta_H}^{--}.\]
	Here $\delta_{P_{\emptyset_H}}$ is the modulus character of $P_{\emptyset_H}$.
\end{prop}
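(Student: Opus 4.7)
My plan is to compute the volume via the identity $\Vol(K_H t K_H) = \Vol(K_H) \cdot [K_H : K_H \cap t K_H t^{-1}]$ and analyze the index using an Iwahori factorization. I would fix an Iwahori subgroup $I \subseteq K_H$ adapted to $(P_{\emptyset_H}, A_{\emptyset_H})$, so that $I$ admits the Iwahori factorization $I = (I \cap N^-_{\emptyset_H}) \cdot (I \cap M_{\emptyset_H}) \cdot (I \cap N_{\emptyset_H})$.

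The core Iwahori-level calculation: for $t \in A_{\Theta_H}^{--}$, conjugation $\Ad(t)$ scales each root subgroup $U_\alpha$ by $|\alpha(t)|$, which equals $1$ on roots of $M_{\Theta_H}$, is strictly less than $1$ on positive roots of $N_{\Theta_H}$, and strictly greater than $1$ on negative roots of $N^-_{\Theta_H}$. It follows that
\[ I \cap t I t^{-1} = (I \cap N^-_{\emptyset_H}) \cdot (I \cap M_{\emptyset_H}) \cdot t (I \cap N_{\emptyset_H}) t^{-1}, \]
and the Jacobian of $n \mapsto tnt^{-1}$ on $N_{\emptyset_H}$, namely $|\det \Ad(t)|_{\Lie N_{\emptyset_H}}| = \delta_{P_{\emptyset_H}}(t)$, gives $\Vol(I \cap tIt^{-1}) = \delta_{P_{\emptyset_H}}(t) \Vol(I)$, equivalently $[I : I \cap tIt^{-1}] = \delta_{P_{\emptyset_H}}^{-1}(t)$. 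Here I would also note $\delta_{P_{\emptyset_H}}|_{A_{\Theta_H}} = \delta_{P_{\Theta_H}}|_{A_{\Theta_H}}$ since roots of $M_{\Theta_H}$ are trivial under $|\cdot|$ on $A_{\Theta_H}$.

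To lift from $I$ to $K_H$, I would use the finite Bruhat decomposition $K_H = \bigsqcup_{w \in W_0} I w I$ with $W_0$ the Weyl group of the residual reductive quotient, and decompose $K_H t K_H$ as a finite disjoint union of $(I,I)$-double cosets $I w_1 t w_2 I$. For $t$ strictly in $\mathcal{T}_{\Theta_H}^{--}$, each surviving cell has volume of the form $q^{\ell(w_1) + \ell(w_2)} \delta_{P_{\emptyset_H}}^{-1}(t) \Vol(I)$ via the Iwahori--Hecke algebra and the affine length function, and the set of pairs $(w_1, w_2)$ that survive is determined by the relative position of $\Theta_H$ in $\Delta_H$ alone. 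Summing yields $\Vol(K_H t K_H) = C_{\Theta_H} \delta_{P_{\emptyset_H}}^{-1}(t)$, with $C_{\Theta_H}$ a $q$-polynomial in Weyl combinatorics and $\Vol(I)$, hence in $\BQ^\times$ (the $\CO_F$-structure on $I$ and $K_H$ ensures rationality against the Haar measure coming from a rational differential form on $H$).

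The main obstacle I anticipate is the Bruhat-cell bookkeeping in the last step: verifying that the set of $(I,I)$-cells contributing to $K_H t K_H$ stabilizes over the whole cone $\mathcal{T}_{\Theta_H}^{--}$ and depends on $t$ only through $\delta_{P_{\emptyset_H}}^{-1}(t)$. This relies crucially on the strict inequalities $|\alpha(t)| < 1$ for $\alpha \in \Delta_H - \Theta_H$, which prevent wall-crossing in the affine Bruhat--Tits building. An alternative route is Macdonald's spherical function formula, which for split $H$ with $Z_H$ anisotropic (the case $\omega = \{e\}$) yields the result directly; the general case then follows by a routine check that the finite cocycle $\omega$ does not affect the leading asymptotics.
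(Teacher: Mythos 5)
The paper's own proof is a two-line citation: the unramified case is \cite[Proposition 1.6]{Cas12} and the general case follows from \cite[Theorem 4.2]{Cas80}. Your proposal is essentially a sketch of the proof of those cited results — the Iwahori factorization, the Jacobian $|\det\Ad(t)|_{\Lie N_{\emptyset_H}}| = \delta_{P_{\emptyset_H}}(t)$, and the lift from $I$ to $K_H$ via the Bruhat decomposition of the residual flag variety — so the two approaches are in the end the same, with yours unpacking the reference.

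Two minor remarks. First, the Bruhat--Iwahori bookkeeping you flag as the main obstacle is genuinely the crux, and it is exactly what Casselman carries out: the set of surviving $(I,I)$-cells depends on $t$ only through the face $\CT_{\Theta_H}^{--}$, and the resulting constant is (a ratio of) Poincar\'e polynomials of $W_{0,H}$ and the parabolic subgroup $W_{0,\Theta_H}$ evaluated at $q$; your claim that individual cells have volume exactly $q^{\ell(w_1)+\ell(w_2)}\delta^{-1}(t)\Vol(I)$ is not literally correct cell-by-cell, but the aggregate count works out as in the reference. Second, your closing worry about the finite cocycle $\omega$ from the Cartan decomposition is not actually needed for this proposition: the statement concerns only $\Vol(K_H t K_H)$ for $t\in\CT_{\Theta_H}^{--}$ with no $\omega$-translate inserted, so the $\omega=\{e\}$ caveat is moot here (the $\omega$'s only enter later, in Lemma~\ref{eva}, where the period is assembled from the full Cartan decomposition). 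The rationality of $C_{\Theta_H}$ also comes out more simply than you suggest: $[K_H : K_H\cap tK_Ht^{-1}]$ is an integer, $\Vol(K_H)\in\BQ^\times$ for the measure of Section~2.2, and $\delta_{P_{\emptyset_H}}(t)$ is a power of $q$, so $C_{\Theta_H}\in\BQ^\times$ immediately.
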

\begin{proof}When $H$ is unramified, see \cite[Proposition 1.6]{Cas12}. In general, one can deduce the result from \cite[Theorem 4.2]{Cas80}.
\end{proof}

Now we  introduce the  notion of \dfn{admitting a reduction structure} for spherical varieties,  which is crucial for
Theorem \ref{main}. The reduction structure deals with the regions $\CT_{\Theta_H}^{--}$, $\Theta_H\subset\Delta_H$. Recall the following facts for $?=-,--$ and  $\Theta_H\subset \Delta_H$: 
\begin{itemize}
\item  $\CT_{\Theta_H}^-$ is naturally an abelian semigroup. Moreover any $t\in T_{\Theta_H}^-$ induces an injection
	\[T_t: \CT_{\Theta_H}^? \hookrightarrow \CT_{\Theta_H}^?\]
    \item 
the natural map $\CT^?_{\Theta_H^\prime}\to \CT^?_{\Theta_H}$ is an injection for any $\Theta_H\subset \Theta_H^\prime\subset\Delta_H$. Moreover,
     \[\CT_{\Theta_H}^-=\bigsqcup_{\Theta_H\subset \Theta_H^\prime}\CT_{\Theta_H^\prime}^{--}.\]
\end{itemize}
 For each region $\CT_{\Theta_H}^{--}$, the set  $ \Delta_H-\Theta_H$  of simple roots induces the map:
 $$\CT_{\Theta_H}^{--}\lra \BZ_{\geq1}^{\Delta_H-\Theta_H}, \quad t\mapsto \ \big(v_\alpha(t):=\val(\alpha(t))\big)_{\alpha\in \Delta_H-\Theta_H}.$$
To control the kernel,  we 
fix a finite subset $C_H\subset \Rat(A_{\emptyset_H})$  such  that $\Delta_H\cap C_H=\emptyset$ while
$\Delta_H\cup C_H$ is a basis of
$\Rat(A_{\emptyset_H})\otimes_{\BZ}\BQ$. Note that the cardinality of $C_H$ is just the split rank of the center $Z_H$ of $H$ and one has the embedding
\begin{equation}\label{embedding-CT}
\xymatrix{& \CT_{\Theta_H}^{--} \ar@{^(-_>}[r] & \BZ_{\geq1}^{\Delta_H-\Theta_H}\times \BZ^{C_H} ,\quad 
t\mapsto \left((v_\alpha(t)_{\alpha\in \Delta_H-\Theta_H}, (v_\beta(t))_{\beta\in C_H} \right).}
\end{equation}
In fact, for the pairs $(G,H)$ we shall consider, the split rank of $Z_H$ is at most $1$ and
$C_H$ is either the empty set or a singleton. 
For the convenience of readers, we now  introduce the reduction structure for these two cases separately. 

For now on, assume $A_{\emptyset_H} \subset A_{\emptyset_G}$.  Firstly, we consider the case $C_H=\emptyset$.
\begin{defn}[Case $C_H=\emptyset$]\label{reduction structure I}
  A \dfn{reduction structure} for $\Theta_H\subset\Delta_H$  with respect to $G$ is a finite set
\[S = S(\Theta_H) = \{(\Theta,w, s(\Theta,w))\}\]
where
\begin{itemize}
\item $\Theta\subset \Delta_G$,
\item $w\in W_G:=Z_G(A_{\emptyset_G})\bs N_G(A_{\emptyset_G})$, the
Weyl group of $A_{\emptyset_G}$,
\item $s=s(\Theta,w)\in w^{-1}A_{\Theta}^-w\cap A_{\emptyset_H}^-$
whose image  in $\CT_{\emptyset_H}^{-}$ belongs to $\CT_{\Theta_H}^{-}-\{e\}$
       \end{itemize}
satisfying the following two finite conditions
\begin{enumerate}[(F1)]
        \item for any $(\Theta,w,s)\in S$  and any $n\in\BN$,
        $\CT_{\Theta_H}^{--}-T_{s^n}(\CT_{\Theta_H}^{--})$ is a finite disjoint union of  sets of the form $T_t\CT_{\Theta_H^\prime}^{--}$ with $t \in \CT_{\Theta_H}^{-}$ and $\Theta_H\subsetneq \Theta_H^\prime$. (When $\Theta_H=\Delta_H$,  $\CT_{\Theta_H}^{--} = T_{s^n}(\CT_{\Theta_H}^{--})$ automatically);
    \item  for any $0<\epsilon\leq 1$, the image of
$$\bigcap_{(\Theta,w,s)\in S}\left\{a\in A_{\Theta_H}^{--} \Big| \ \mathrm{if}\ waw^{-1}\in A_{\emptyset_G}^-,\ \exists \alpha\in \Delta_G-\Theta, |\alpha(waw^{-1})|\geq \epsilon\right\}$$
	    in  $\CT_{\Theta_H}^{--}$ is finite, while the image is not finite when $S$ is replaced by  any proper subset.
\end{enumerate}
Here if $S=\emptyset$, (F1) is empty and (F2) reads that 
$\CT_{\Theta_H}^{--}$ is finite (in fact, a singleton).

\end{defn}
The reduction structure is introduced to analyze the rationality of the following formal power series.
Let $\pi$ be a finitely generated smooth admissible $R[G(F)]$-module satisfying the assumptions in
Conjecture \ref{conj-2}. In particular, there is a $R[G(F)]$-module $\wt{\pi}$ with an invariant $R$-bilinear
pairing $\langle \cdot,\cdot \rangle$ on $\pi \times \wt{\pi}$.
 For any subset $\Theta_H \subset \Delta_H$, $v \in \pi^{A_{\emptyset_H}^\circ}$ and $\wt{v} \in \wt{\pi}^{A_{\emptyset_H}^\circ}$, consider
\[\wt{F}_{\Theta_H,v,\wt{v}}(T):=\sum_{t\in \CT_{\Theta_H}^{--}}
\langle \pi(t)v, \wt{v}\rangle T^{v(t)}, \quad T^{v(t)}:=\prod_{\alpha\in \Delta_H-\Theta_H} T_\alpha^{v_{\alpha}(t)}.\]
\begin{example}[$H$-anisotropic case]\label{aniso}
When $H$ is anisotropic, then $\Delta_H=C_H=\emptyset$. In this case,  $\Delta_H$ admits the reduction structure $\emptyset$ and  $\wt{F}_{\Delta_H,v,\wt{v}}=\langle v, \wt{v}\rangle$ is constant.
\end{example}
\begin{example}[The triple product case]\label{Triple}
Let $H = \PGL_2$ embed into $G = \BG_m \bs \GL_2^3$ diagonally. One has $C_H=\emptyset$ and $\Delta_H = \{\alpha\}$ with
$\alpha\left[ \matrixx{t_1}{0}{0}{t_2} \right] =  t_1/t_2$.
Then  $\wt{F}_{\Delta_H,v,\wt{v}}=\langle v,\wt{v}\rangle$ and 
\[\wt{F}_{\emptyset_H,v,\wt{v}}(T) = \sum_{a \geq 1} \langle \pi(t(a,0))v,\wt{v} \rangle T^a, \quad t(a,0) =
\matrixx{\varpi^a}{0}{0}{1}.\]
  We now discuss the rationality of $\wt{F}_{\emptyset_H,v,\wt{v}}(T)$. With respect to $G$,  $\emptyset_H$ admits the reduction structure
\[S = \{(\emptyset_G,e,s=t(1,0))\}\]
since
\begin{enumerate}
\item[(F1)] for each $n$, we have the {\em finite} decomposition
\[\CT_{\emptyset_H}^{--}-T_{s^n}(\CT_{\emptyset_H}^{--})=\bigsqcup_{i=1}^{n}T_{s^i}(\CT_{\Delta_H}^{--})\]
\item[(F2)] for any $0 < \epsilon \leq 1$,
\[\left\{ a \in A_{\emptyset_H}^{--} \Big| |\alpha(a)| > \epsilon \right\}/A_{\emptyset_H}^\circ\]
is  finite.
\end{enumerate}
By the admissibility of Jacquet modules, there is a non-zero polynomial
$P(X)=\sum_{n\geq0}c_n X^n$  such that $P(\pi(s))v\in \pi(N_{\emptyset_G})$ for all $v\in\pi$.
By (F1), one has  the equation of formal power series
$$ \wt{F}_{\emptyset_H, P(\pi(s))v,\wt{v}}(T)=P(T^{-1})\wt{F}_{\emptyset_H,v,\wt{v}}(T)-\sum_{n\geq1}\sum_{a=1}^nc_n\langle
\pi(t(a,0))v,\wt{v}\rangle T^{a-n}.$$
Corollary \ref{Asym} and (F2) imply the rationality of $\wt{F}_{\emptyset_H,P(\pi(s))v,\wt{v}}$. The
 rationality of $\wt{F}_{\emptyset_H,v,\wt{v}}$ then follows.
\end{example}

Now we turn to the case $C_H$ is a singleton, say $C_H = \{\beta\}$. 
For each $\Theta_H \subset \Delta_H$, consider the 
domain $\BZ_{\geq1}^{\Delta_H-\Theta_H}\times \BZ$ embedding \ref{embedding-CT}. All the coordinates are contained 
in the ``cone'' $\BZ_{\geq 1}$ except the last one which corresponds to $\beta$. To define the right object $\wt{F}_{\Theta_H,v,\wt{v}}$ (so it is a formal power series), we  further decompose  $\CT_{\Theta_H}^{?}$ into disjoint unions of ``cones'':
$$\CT_{\Theta_H}^{?}=\CT_{\Theta_H}^{?,+}\bigsqcup \CT_{\Theta_H}^{?,0}\bigsqcup \CT_{\Theta_H}^{?,-},\quad ?=-,--$$
where for $*=+,0,-$, $\CT_{\Theta_H}^{?,*}\subset\CT_{\Theta_H}^?$ is the subset cut out by the condition $v_{\beta}(x)>0, =0, <0$  respectively.


For $*=+,0,-$, we have 
\[\CT_{\Theta_H}^{-,*}=\bigsqcup_{\Theta_H\subset \Theta_H^\prime }\CT_{\Theta_H^\prime}^{--,*}\]
 and for any $t\in \CT_{\Theta_H}^{-,*}$, 
 \[T_t(\CT_{\Theta_H}^{?,*})\subset \CT_{\Theta_H}^{?,*}.\]

Let $A_{\Theta_H}^{?,*}$ be the pre-image of $\CT_{\Theta_H}^{?,*}$ in $A_{\Theta_H}^?$.

\begin{defn}[Case $\sharp C_H=1$]\label{reduction structure II}
For any $\Theta_H\subset \Delta_H$ and $*=+,0,-$, a  \dfn{reduction structure}  with respect to $G$ for the pair $(\Theta_H, *)$ is a  finite set
\[S = S(\Theta_H, *) = \{(\Theta,w, s(\Theta,w))\}\]
where
\begin{itemize}
\item $\Theta\subset \Delta_G$ and $w\in W_G$;
\item $s=s(\Theta,w)\in w^{-1}A_{\Theta}^-w\cap A_{\emptyset_H}^{-}$
      whose image in $\CT_{\emptyset_H}^{-}$ belongs to $\CT_{\Theta_H}^{-,0}\bigcup \CT_{\Theta_H}^{-,*}-\{e\}$
       \end{itemize}
satisfying the following two finite conditions
\begin{enumerate}[(F1)]
        \item for any $(\Theta,w,s)\in S$  and  $n\in\BN$,
        $\CT_{\Theta_H}^{--,*}-T_{s^n}(\CT_{\Theta_H}^{--,*})$ is a finite disjoint union of  sets of the form \begin{itemize}
            \item $T_t\CT_{\Theta_H^\prime}^{--,*}$ with $t \in \CT_{\Theta_H}^{-,*}$ and $\Theta_H\subsetneq \Theta_H^\prime$   if $*=0$;
            \item $T_t\CT_{\Theta_H^\prime}^{--,*}$ with 
            $t \in \CT_{\Theta_H}^{-,*}$ and $\Theta_H\subsetneq \Theta_H^\prime$ or 
            $T_{t_1}\CT_{\Theta_H}^{--,0}$ with $t_1 \in \CT_{\Theta_H}^{-,*}$   if $*=+,-$; (both types of sets
             may occur in the union)
        \end{itemize}
    \item  for any $0<\epsilon\leq 1$, the image of
$$\bigcap_{(\Theta,w,s)\in S}\left\{a\in A_{\Theta_H}^{--,*} \Big| \ \mathrm{if}\ waw^{-1}\in A_{\emptyset_G}^-,\ \exists \alpha
\in \Delta_G-\Theta, |\alpha(waw^{-1})|\geq \epsilon\right\}$$
	    in  $\CT_{\Theta_H}^{--,*}$ is finite, while the image is not finite when $S$ is replaced by  any proper subset.
\end{enumerate}
Here if $S=\emptyset$, (F1) is empty  and (F2) means $\CT_{\Theta_H}^{--,*}$ is finite. 

For $\Theta_H$, if $S(\Theta_H,*)$ is a reduction structure for $(\Theta_H,*)$, then we call
\[S = S(\Theta_H) =  \bigsqcup_{*=+,0,-} S(\Theta_H,*)\]
a {\em reduction structure for $\Theta_H$}. 


\end{defn}
These reduction structures are introduced to analyze the following  formal power series: 
\[\wt{F}_{\Theta_H,v,\wt{v}}(T):=\sum_{*=+,0,-} \wt{F}_{\Theta_H,v,\wt{v}}^{*}(T),\quad v\in \pi^{A_{\emptyset_H}^\circ}, \wt{v}\in\wt{\pi}^{A_{\emptyset_H}^\circ}\] 
\[\wt{F}_{\Theta_H,v,\wt{v}}^{*}(T):=\sum_{t\in \CT_{\Theta_H}^{--,*}}
\langle \pi(t)v, \tilde{v}\rangle T^{|v(t)|}, \quad T^{|v(t)|}:=T_{\beta}^{|v_\beta(t)|}
\prod_{\alpha\in\Delta_H-\Theta_H} T_\alpha^{v_\alpha(t)}.\]
Here we put $|\cdot|$ on the power of $T_\beta$ (make difference only when $*=-$) to  obtain formal power series.  

\begin{example}[The case $\BG_m\subset \PGL_2$]\label{Wal}
Consider the pair
$$H=\BG_m\hookrightarrow G=\PGL_2,\quad t\mapsto  \begin{pmatrix} t & 0\\ 0 & 1\end{pmatrix}$$
and choose $C_H=\{\id\}$. Then $\wt{F}_{\emptyset_H,v,\wt{v}}^0=\langle v, \wt{v}\rangle$ and
\begin{align*}
    \wt{F}_{\emptyset_H, v, \wt{v}}^{\pm}(T)=\sum_{a\geq1}\langle \pi(\varpi^{\pm a})v,\wt{v}\rangle T^{a}
\end{align*}
The pairs $(\emptyset_H,\pm)$ have the reduction structures 
\[ S(\emptyset_H,+)=\left\{(\emptyset_G,e,\varpi)\right\},\quad S(\emptyset_H,-)=\left\{(\emptyset_G,w,\varpi^{-1})\right\},\ w:=\begin{pmatrix} 0 & 1\\ 1 & 0\end{pmatrix}
\]
since 
\begin{itemize}
    \item for any $0<\epsilon\leq1$, $\{a\in F^\times|\epsilon\leq |a|<1\}/\CO_F^\times$ and $\{a\in F^\times|1< |a|\leq\epsilon^{-1}\}/\CO_F^\times$ are finite;
    \item  for $1\leq n\in\BN$, $$\CT_{\emptyset_H}^{--,\pm}-T_{\varpi^{\pm n}}\CT_{\emptyset_H}^{--,\pm}=\bigsqcup_{i=1}^{n}T_{\varpi^{\pm i}}\CT_{\emptyset_H}^{--,0}.$$
\end{itemize}
Take any non-zero polynomial
 $P(X)$ (resp. $P_w(X)$)  such that  for all $v\in \pi$, $P(\pi(\varpi))v\in \pi(N_{\emptyset_G})$
(resp.  $P_{w}(\pi(\varpi^{-1}))v\in \pi(wN_{\emptyset_G}w^{-1})$).
Then  
$$\wt{F}^+_{\emptyset_H, P(\pi(\varpi))v,\wt{v}}(T)=P(T^{-1})\wt{F}^+_{\emptyset_H, v, \wt{v}}(T)+G_+(T^{-1}),$$
$$\wt{F}^-_{\emptyset_H,P_w(\pi(\varpi^{-1}))v,\wt{v}}(T)=P_w(T^{-1})\wt{F}^-_{\emptyset_H, v, \wt{v}}(T)+G_{-}(T^{-1})$$
for some   polynomials $G_{+}$ and $G_{-}$. By Corollary \ref{Asym} and (F2), the summations in the left hand are actually finite.  
The rationality of $\tilde{F}^\pm_{\emptyset_H, v, \wt{v}}(T)$ then follows.
\end{example}
\begin{remark} Actually the above decomposition can
be generalized to the case $\sharp C_H\geq2$ so that the following proposition  still holds.
Precisely, one can decompose 
$$\CT_{\Theta_H}^{?}=\bigsqcup_{(I,\epsilon_I)}\CT_{\Theta_H,I}^{?,\epsilon_I},\quad I\subset C_H,\  \epsilon_{I}=(\epsilon_\beta)\in\{\pm\}^{C_H-I}$$ and for $?=-,--$, 
$\CT^{?,\epsilon_{I}}_{\Theta_H,I}:=A_{\Theta_H,I}^{?,\epsilon_{I}}/A_{\Theta_H}^{\circ}$ where
$$A^{?,\epsilon_{I}}_{\Theta_H,I}:=\{x\in A_{\Theta_H}^?|\  v_{\beta}(x)=0, \ \forall\ \beta\in I,\ \epsilon_\beta v_\beta(x)>0,\ \forall\ \beta\in C_H-I\}.$$
Then we can define a reduction structure for the triple $(\Theta_H,I,\epsilon_I)$ in a similar manner and consider the rationality of the formal powers $$\wt{F}_{\Theta_H,I,v,\wt{v}}^{\epsilon_I}(T):=\sum_{t\in \CT_{\Theta_H,I}^{--,\epsilon_I}}
\langle \pi(t)v, \wt{v}\rangle T^{|v(t)|}.$$
\end{remark}
From now on, we assume  $\sharp C_H\leq1$. 
\begin{prop}\label{formal power} 
Take $* = \emptyset, +,0,-$ and $\Theta_H\subset\Delta_H$ such that any
 $\Theta_H\subset \Theta_H^\prime\subset \Delta_H$ admits a reduction structure $S(\Theta_H^\prime,*)$. 
 Then for any $v \in \pi^{A_{\emptyset_H}^\circ}$ and $\wt{v} \in \wt{\pi}^{A_{\emptyset_H}^\circ}$, the formal power series
$\wt{F}_{\Theta_H,v,\wt{v}}^*$ is rational. 

Moreover, 
for each $(\Theta,w,s) \in \bigsqcup_{\Theta_H\subset \Theta_H^\prime}S(\Theta_H^\prime,*)$, there exists a one variable polynomial $P_{\Theta,w,s}$ which is
independent of $v$ and $\wt{v}$, and whose first and last coefficients are units  such that   
$$\wt{F}^*_{\Theta_H,v,\wt{v}}(T)=\frac{N(T)}{M(T)\prod_{(\Theta,w,s)}P_{\Theta,w,s}(T^{-|v(s)|})}$$
for some polynomial $N(T) \in R[T]$ and monomial $M(T)\in R[T]$. Here $(\Theta,w,s)$ runs over $\bigsqcup_{\Theta_H\subset \Theta_H^\prime}S(\Theta_H^\prime,*)$.
\end{prop}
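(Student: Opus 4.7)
The plan is to prove rationality by downward induction on $|\Theta_H|$, and for each fixed $\Theta_H$ to handle $*=0$ before $*=\pm$. This ordering matches the structure of (F1): for $*=0$ only strictly larger $\Theta_H'$ appear on the right of the recursion, while for $*=\pm$ the right-hand side may additionally involve $\wt{F}^0_{\Theta_H,\cdot,\cdot}$, which is already available by the induction on $*$. The base case $\Theta_H=\Delta_H$ is either immediate (when $\CT_{\Delta_H}^{--,*}$ is finite the series is a polynomial) or handled by the same argument below with only trivial inductive input; the case $C_H=\emptyset$ is treated identically, simply dropping the symbol $*$.

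For the inductive step, fix $(\Theta_H,*)$ and attach to each triple $(\Theta,w,s)\in S(\Theta_H,*)$ a polynomial $P_{\Theta,w,s}(X)\in R[X]$ as follows. Consider the $w$-twisted $R[G(F)]$-module $\pi^w$ with action $\pi^w(g):=\pi(w^{-1}gw)$; since $s\in w^{-1}A_\Theta^- w$, the conjugate $wsw^{-1}$ lies in the central split torus $A_\Theta\subset Z(M_\Theta)$ and normalizes every compact open subgroup of $M_\Theta(F)$. By Theorem \ref{admjac}, $J_{N_\Theta}(\pi^w)$ is admissible as an $R[M_\Theta(F)]$-module; choosing a compact open $K_\Theta\subset M_\Theta(F)$ such that the image of $\pi^{A_{\emptyset_H}^\circ}$ in $J_{N_\Theta}(\pi^w)$ lands inside $J_{N_\Theta}(\pi^w)^{K_\Theta}$, the latter is finitely generated over $R$. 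Cayley--Hamilton applied to the endomorphism induced by $\pi^w(wsw^{-1})=\pi(s)$ produces a monic polynomial $P_{\Theta,w,s}(X)\in R[X]$ satisfying
\[
P_{\Theta,w,s}(\pi(s))\,v \in \pi(w^{-1}N_\Theta w), \qquad v\in\pi^{A_{\emptyset_H}^\circ}.
\]
The leading coefficient is a unit by monicity; the invertibility of $\pi(s)$ on every fiber $\pi|_x$ makes the constant term a non-zero divisor on each generic fiber, hence a unit after shrinking $\Spec(R)$ to a neighborhood of $\Sigma$.

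The recursion itself is a direct computation. For every $n\geq 0$,
\[
\wt{F}^*_{\Theta_H,v,\wt{v}}(T) - T^{n|v(s)|}\wt{F}^*_{\Theta_H,\pi(s^n)v,\wt{v}}(T) = \sum_{t\in\CT^{--,*}_{\Theta_H} - T_{s^n}(\CT^{--,*}_{\Theta_H})} \langle\pi(t)v,\wt{v}\rangle\,T^{|v(t)|},
\]
and by (F1) the right-hand side is a finite $R$-linear combination of subseries of the form $T^{|v(t_i)|}\wt{F}^{*_i}_{\Theta'_{H,i},\pi(t_i)v,\wt{v}}$ with $(\Theta'_{H,i},*_i)$ strictly above $(\Theta_H,*)$ in the induction order, hence rational of the prescribed form by hypothesis. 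Writing $P_{\Theta,w,s}(X)=\sum_n c_n X^n$ and combining these identities with weights $c_n T^{-n|v(s)|}$ yields
\[
P_{\Theta,w,s}(T^{-|v(s)|})\wt{F}^*_{\Theta_H,v,\wt{v}}(T) = \wt{F}^*_{\Theta_H,P_{\Theta,w,s}(\pi(s))v,\wt{v}}(T) + \Phi_{\Theta,w,s}(T)
\]
with $\Phi_{\Theta,w,s}$ rational. Iterating over all $(\Theta_i,w_i,s_i)\in S(\Theta_H,*)$ in any fixed order produces the vector $v^{(|S|)}:=\prod_i P_{\Theta_i,w_i,s_i}(\pi(s_i))\,v$ lying in $\bigcap_i\pi(w_i^{-1}N_{\Theta_i}w_i)$; the stability $\pi(s_j)\pi(w_i^{-1}N_{\Theta_i}w_i)\subset\pi(w_i^{-1}N_{\Theta_i}w_i)$ rests on $w_i s_j w_i^{-1}\in A_{\emptyset_G}\subset M_{\Theta_i}$, which normalizes $N_{\Theta_i}$, and is the key geometric compatibility underlying Definitions \ref{reduction structure I}--\ref{reduction structure II}.

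To finish, Corollary \ref{Asym} applied to each $\pi^{w_i}$ furnishes $\epsilon_i>0$ such that $\langle\pi(t)v^{(|S|)},\wt{v}\rangle=0$ whenever $w_itw_i^{-1}\in A_{\emptyset_G}^-$ and $|\alpha(w_itw_i^{-1})|<\epsilon_i$ for all $\alpha\in\Delta_G-\Theta_i$. Taking $\epsilon=\min_i\epsilon_i$, the finite condition (F2) ensures that all but finitely many $t\in\CT^{--,*}_{\Theta_H}$ satisfy this condition for at least one index $i$, so $\wt{F}^*_{\Theta_H,v^{(|S|)},\wt{v}}(T)$ has only finitely many nonzero coefficients and is a polynomial. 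Dividing by $\prod_i P_{\Theta_i,w_i,s_i}(T^{-|v(s_i)|})$ and absorbing the negative powers of $T$ into a monomial $M(T)$ yields the asserted rational form. The main obstacle is the combinatorial bookkeeping required to iterate the polynomial operators $P_{\Theta_i,w_i,s_i}(\pi(s_i))$ while simultaneously preserving membership in each $\pi(w_i^{-1}N_{\Theta_i}w_i)$; once that is in hand, the remaining ingredients fit together from Theorem \ref{admjac}, Corollary \ref{Asym}, and the finite conditions (F1)--(F2).
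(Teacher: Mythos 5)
Your overall argument follows the same plan as the paper's proof: downward induction on $|\Delta_H - \Theta_H|$, treating $*=0$ before $*=\pm$, feeding the (F1) recursion through the operators $P_{\Theta,w,s}(\pi(s))$, and invoking Corollary \ref{Asym} together with (F2) to truncate $\wt{F}^*_{\Theta_H,v^{(|S|)},\wt{v}}$ to a polynomial; the stability argument ($s_j$ normalizes $w_i^{-1}N_{\Theta_i}w_i$ since $w_is_jw_i^{-1}\in A_{\emptyset_G}\subset M_{\Theta_i}$) is also correct.

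However, your construction of $P_{\Theta,w,s}$ has two gaps. First, the choice of a compact open $K_\Theta\subset M_\Theta(F)$ with the image of $\pi^{A_{\emptyset_H}^\circ}$ landing in $J_{N_\Theta}(\pi^w)^{K_\Theta}$ is not justified: $A_{\emptyset_H}^\circ$ is compact but \emph{not} open in $G(F)$, so $\pi^{A_{\emptyset_H}^\circ}$ need not be a finitely generated $R$-module, and its image in the Jacquet module need not lie in the $K_\Theta$-fixed vectors for any single $K_\Theta$. What one should instead use is a $K_\Theta$ for which $J_{N_\Theta}(\pi^w)^{K_\Theta}$ \emph{generates} $J_{N_\Theta}(\pi^w)$ over $R[M_\Theta(F)]$ (such a $K_\Theta$ exists since $\pi$ is finitely generated); because $\pi(s)$ is $M_\Theta$-central, a polynomial relation on $J^{K_\Theta}$ then propagates to all of $J$, and the resulting polynomial is automatically independent of $v$. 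Second, and more seriously, the claim that the Cayley--Hamilton constant term becomes a unit ``after shrinking $\Spec(R)$ to a neighborhood of $\Sigma$'' does not go through: the constant term of a Cayley--Hamilton polynomial for a non-free finitely generated module depends on the chosen generating set and lift, and its zero locus, while a proper closed subset, may still contain points of $\Sigma$, so there is in general no open neighborhood of $\Sigma$ inverting it. The proposition asserts unit first and last coefficients over $R$ itself, with no shrinking. The paper obtains this by working in the finite $R$-algebra $\End_{R[w^{-1}M_\Theta w(F)]}(J_{w^{-1}N_\Theta w}(\pi))$ (finite by \cite[Lemma 4.1.1]{Dis20}) and invoking \cite[Page 1808]{Mos16}: since $\pi(s)$ and $\pi(s)^{-1}$ are both integral over $R$ there, one combines a monic relation $p(\pi(s))=0$ with a monic relation $q(\pi(s)^{-1})=0$, for instance via $P(X):=X^{\deg q}p(X)+X^{\deg q}q(X^{-1})$, to get a single relation $P(\pi(s))=0$ whose leading and constant coefficients are both $1$ and hence units, with no shrinking and no dependence on $v$ or $\wt{v}$.
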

\begin{proof}For any $\Theta\subset \Delta_G$, let $P_\Theta$ be the standard parabolic with standard Levi $M_\Theta=Z_{P_\Theta}(A_\Theta)$ and unipotent radical $N_{\Theta}$. We will treat the case $\sharp C_H=1$ here. The case $C_H=\emptyset$ is similar and  easier. By construction, it suffices to show $\wt{F}^*_{\Theta_H,v,\wt{v}}$ is rational for each $*=+,0,-$.  The idea is to make induction on the size of $\Delta_H-\Theta_H$.

We start with $*=0$ (the treatment when $C_H=\emptyset$ is similar to this case). For $\Theta_H=\Delta_H$, $\CT_{\Delta_H}^{--,0}$ is a singleton and $\wt{F}_{\Delta_H,v,\wt{v}}^{0}$ is a constant. For general $\Theta_H \subset \Delta_H$ , we make the induction hypothesis that 
$\wt{F}_{\Theta_H',v,\wt{v}}^0$ is rational for any $\Theta_H \subsetneq \Theta_H' \subset \Delta_H$, $v$ and $\wt{v}$. Now we  prove that $\wt{F}_{\Theta_H,v,\wt{v}}^0$ is rational.

Note that for  $(\Theta,w,s)\in S(\Theta_H,0)$, $\pi(s)$ induces an invertible element in $\End_{R[w^{-1}M_\Theta w(F)]}(J_{w^{-1}N_\Theta w}(\pi))$. Since $\pi$ is a finitely generated smooth admissible $R[G(F)]$-module, $J_{w^{-1}N_\Theta w}(\pi)$ is a finitely generated  smooth admissible $R[w^{-1}M_\Theta(F)w]$-module by Theorem \ref{admjac}.  Thus by \cite[Lemma 4.1.1]{Dis20}, the $R$-module $\End_{R[w^{-1}M_\Theta w(F)]}(J_{w^{-1}N_\Theta w}(\pi))$ is  finite and as explained in \cite[Page 1808]{Mos16},  there exists $$P_{\Theta,w,s}(X)=\sum_{i=0}^Na_iX^i\in R[X],\quad a_0,a_N\in R^\times$$
such that $P_{\Theta,w,s}(\pi(s))v\in \pi(w^{-1}N_\Theta w)$ for all $v\in \pi$.
For $0\leq i\leq N$, assume that 
\begin{equation}\label{decomposition-0}
\CT_{\Theta_H}^{--,0}-T_{s^i}\left(\CT_{\Theta_H}^{--,0}\right)=\bigsqcup_{\Theta_H\subsetneq \Theta_H^\prime}\bigsqcup_{t\in I_{\Theta_H^\prime,i}}T_t\left(\CT_{\Theta_H^\prime}^{--,0}\right)
\end{equation}
  where each $I_{\Theta_H^\prime,i}\subset \CT_{\Theta_H}^{-,0}$ is a finite subset.
  Then one has \begin{align*}
T^{iv(s)}\wt{F}^{0}_{\Theta_H,\pi(s^i)v,\wt{v}}=\wt{F}^{0}_{\Theta_H,v,\wt{v}}-\sum_{\Theta_H^\prime}\sum_{t\in I_{\Theta_H^\prime,i}}T^{v(t)}\wt{F}^{0}_{\Theta_H^\prime,\pi(t)v,\wt{v}}, \ T^{v(t)}:=\prod_{\alpha\in \Delta_H-\Theta_H} T_\alpha^{v_\alpha(t)}
  \end{align*}
Consequently, 
\begin{equation}\label{induction-0}
\wt{F}^{0}_{\Theta_H,P_{\Theta,w,s}(\pi(s))v,\wt{v}}=P_{\Theta,w,s}(T^{-v(s)})\wt{F}^{0}_{\Theta_H, v, \wt{v}}-\sum_{i=1}^N\sum_{\Theta_H^\prime}\sum_{t\in I_{\Theta_H^\prime,i}}a_i T^{v(t)-iv(s)}\wt{F}^{0}_{\Theta_H^\prime,\pi(t)v,\wt{v}}. 
 \end{equation}
 Then by the induction hypothesis, $\wt{F}^{0}_{\Theta_H, v, \wt{v}}$ is rational if $\wt{F}^{0}_{\Theta_H,P_{\Theta,w}(\pi(s))v,\wt{v}}$ is rational. 
 
For any $v\in \pi$, the element
\[v^\prime:=\prod_{(\Theta,w,s)\in S}P_{\Theta,w,s}(\pi(s(\Theta, w)))v\]
lies in $\bigcap_{(\Theta,w,s)\in S}\pi(w^{-1}N_{\Theta}w)$. Then by Corollary \ref{Asym}, $\wt{F}^{0}_{\Theta_H,v^\prime,\wt{v}}$ is a polynomial in the variables $T_{\alpha}$, $\alpha\in \Delta_H-\Theta_H$.

If $S$ has only one element so that $v^\prime = P_{\Theta,w,s}(\pi(s(\Theta, w)))v$, we obtain the rationality of
$\wt{F}^{0}_{\Theta_H,v,\wt{v}}$. If $S$ has more than one element, say, $v^\prime = P_{\Theta,w,s}(\pi(s(\Theta,w)))v_1$ for some
$v_1 \in \pi$, then replacing $v$ by $v_1$ in Equation \ref{induction-0}, we obtain the rationality of
$\wt{F}^{0}_{\Theta_H,v_1,\wt{v}}$ (note that the induction hypothesis includes any $v \in \pi^{A_{\emptyset_H}^\circ}$ and any $\wt{v} \in \wt{\pi}^{A_{\emptyset_H}^\circ}$).
As $S$ is finite, one  finally deduces $\wt{F}^{0}_{\Theta_H,v,\wt{v}}$ is rational.

 By a similar induction argument, the case $*=\pm$ is reduced to the case $*=0$. Precisely, similar to Decomposition
 \ref{decomposition-0} for the case $*=0$, for each $i$, we have
 \begin{equation}\label{decomposition-1}
\CT_{\Theta_H}^{--,*}-T_{s^i}\left(\CT_{\Theta_H}^{--,*}\right)=\left(\bigsqcup_{\Theta_H\subsetneq \Theta_H^\prime}\bigsqcup_{t\in I_{\Theta_H^\prime,i}}T_t\left(\CT_{\Theta_H^\prime}^{--,*}\right)\right) \bigsqcup \left(\bigsqcup_{t_1 \in I_{\Theta_H,i}} T_{t_1}
\left(\CT_{\Theta_H^\prime}^{--,0}\right)\right).
\end{equation}
 where $I_{\Theta_H^\prime,i}$ (resp. $I_{\Theta_H,i}$) is a finite subset of $\CT_{\Theta_H^\prime}^{-,*}$ (resp. 
 $\CT_{\Theta_H}^{-,*}$). Then, similar to Equation \ref{induction-0}, we have
 \begin{equation}\label{induction-1}
 \begin{aligned}
\wt{F}^{*}_{\Theta_H,P_{\Theta,w,s}(\pi(s))v,\wt{v}} =&P_{\Theta,w,s}(T^{-|v(s)|})\wt{F}^{*}_{\Theta_H, v, \wt{v}} 
-\sum_{i=1}^N\sum_{\Theta_H^\prime}\sum_{t\in I_{\Theta_H^\prime,i}}a_i T^{|v(t)|-i|v(s)|}\wt{F}^{*}_{\Theta_H^\prime,\pi(t)v,\wt{v}}\\
&-\sum_{i=1}^{N}\sum_{t_1\in I_{\Theta_H,i}}a_i T^{|v(t)|-i|v(s)|}\wt{F}^{0}_{\Theta_H,\pi(t_1)v,\wt{v}}.
 \end{aligned}
 \end{equation}
 Note that   the rationality of $\wt{F}^{0}_{\Theta_H,\pi(t_1)v,\wt{v}}$ is already established.  The same argument in the $*=0$ case gives
 the rationality for $\wt{F}^{*}_{\Theta_H,\pi(t)v,\wt{v}}$. 

For the "Moreover" part, we just apply Equations \ref{induction-0} and \ref{induction-1} 
by making induction on the size of $\Delta_H - \Theta_H$. We are done.

\end{proof}
Let  $\fn_{\emptyset_H}$ is the Lie algebra of $N_{\emptyset_H}$ and assume $$\det(\Ad_{A_{\emptyset_H}})|_{\fn_{\emptyset_H}}=\sum_{\alpha\in\Delta_H}N_{\alpha}\alpha;\ N_\alpha\in \BN.$$ We will need  to evaluate $\tilde{F}_{\Theta_H,v,\tilde{v}}$ at $T_\alpha=q^{N_\alpha}$, $\alpha\in\Delta_H-\Theta_H$ and $T_\beta=1$, $\beta\in C_H$. To this end, it would be convenient to  change  the formal power series in multiple variables to formal power series in one variable. We record the following corollary  of Proposition \ref{formal power}.   
Note that for any $t\in A_{\Theta_H}$,
$$\delta_{P_{\emptyset_H}}^{-1}(t)=\prod_{\alpha\in\Delta_H-\Theta_H}q^{N_\alpha v_\alpha(t)}.$$
For each $N\in\BN$ and $*=\emptyset,+,0,-$, denote by
$$\CT_{\Theta_H,N}^{--,*}=\left\{t\in\CT_{\Theta_H}^{--,*}\Big|\sum_{\alpha\in(\Delta_H-\Theta_H)\sqcup C_H}|v_\alpha(t)|=N\right\}$$
Note that $\CT_{\Theta_H,N}^{--,*}$ is finite and when  $C_H=\{\beta\}$, $$\CT_{\Theta_H,N}^{--}=\CT_{\Theta_H,N}^{--,+}\bigsqcup \CT_{\Theta_H,N}^{--,0}\bigsqcup \CT_{\Theta_H,N}^{--,-}.$$
\begin{cor}\label{formal power cor}Assumptions and notations as in Proposition \ref{formal power}. Then for  any  $*=\emptyset,+,0,-$ and   $v\in\pi^{A_{\emptyset_H}^\circ}$, $\wt{v}\in\wt{\pi}^{A_{\emptyset_H}^\circ}$,   the formal power series 
$$F^*_{\Theta_H,v,\wt{v}}(S):=\sum_{N\geq 0}\left(\sum_{t\in \CT_{\Theta_H,N}^{--,*}}\langle\pi(t)v,\wt{v}\rangle
\delta_{P_{\emptyset_H}}^{-1}(t)\right)S^N$$
 is rational. Moreover for each $(\Theta_H,*)$, there exists polynomials $Q(S),P(S)\in R[S]$ with the  first and last coefficients of $P(S)$ being units such that 
$$F^*_{\Theta_H,v,\wt{v}}(S)=\frac{Q(S)}{P(S)}.$$
\end{cor}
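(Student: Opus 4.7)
The plan is to deduce the rationality of $F^*_{\Theta_H,v,\wt{v}}(S)$ by specializing the multivariable rational expression already furnished by Proposition \ref{formal power}, and to read off the shape of the denominator from the specialization.

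First I would set up the substitution $\phi \colon T_\alpha \mapsto q^{N_\alpha} S$ for $\alpha \in \Delta_H-\Theta_H$ and (in case $C_H=\{\beta\}$) $T_\beta \mapsto S$. For $t \in \CT_{\Theta_H}^{--,*}$, since $v_\alpha(t) \geq 1$ for each $\alpha \in \Delta_H - \Theta_H$ we have $|v_\alpha(t)| = v_\alpha(t)$, and so the monomial $T^{|v(t)|}$ is sent to
\[
S^{|v_\beta(t)| + \sum_\alpha v_\alpha(t)} \cdot \prod_\alpha q^{N_\alpha v_\alpha(t)} = S^{d(t)} \cdot \delta_{P_{\emptyset_H}}^{-1}(t), \qquad d(t) := |v_\beta(t)| + \sum_\alpha v_\alpha(t).
\]
Because each slice $\CT_{\Theta_H,N}^{--,*}$ is finite, the map $\phi$ extends termwise from $R[[T_\alpha,T_\beta]]$ to $R[[S]]$ and sends $\wt{F}^*_{\Theta_H,v,\wt{v}}(T)$ to $F^*_{\Theta_H,v,\wt{v}}(S)$.

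Next I would apply $\phi$ to the rational expression
\[
\wt{F}^*_{\Theta_H,v,\wt{v}}(T) = \frac{N(T)}{M(T) \prod_{(\Theta,w,s)} P_{\Theta,w,s}(T^{-|v(s)|})}
\]
coming from the ``moreover'' part of Proposition \ref{formal power}. The monomial $M(T)$ is sent to $c \cdot S^m$ for some $c \in R^\times$ and $m \geq 0$; for each $(\Theta,w,s)$, $T^{-|v(s)|}$ is sent to $c_s S^{-M_s}$ with $c_s \in R^\times$ a unit and $M_s := |v_\beta(s)| + \sum_\alpha v_\alpha(s) \geq 1$. Here $q$ is invertible in $R$ because $R$ is a $\BQ$-algebra. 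Since the first and last coefficients of $P_{\Theta,w,s}$ are units, the polynomial $S^{M_s \deg P_{\Theta,w,s}} \cdot P_{\Theta,w,s}(c_s S^{-M_s})$ lies in $R[S]$ and again has unit first and last coefficients.

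Combining these specialized factors and absorbing the overall monomial in $S$ into the numerator yields the desired $F^*_{\Theta_H,v,\wt{v}}(S) = Q(S)/P(S)$, with $P(S)$ equal up to a unit of $R$ to $\prod_{(\Theta,w,s)} S^{M_s \deg P_{\Theta,w,s}} \cdot P_{\Theta,w,s}(c_s S^{-M_s})$; its first and last coefficients are then units as a product of such. The only delicate point is to check that the specialized denominator is invertible in $R((S))$ so that $\phi$ of the rational expression makes sense as a Laurent series agreeing with $F^*_{\Theta_H,v,\wt{v}}(S)$, and this is immediate since the leading coefficient of the specialized denominator is a finite product of units of $R$. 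I do not anticipate any substantial obstacle beyond this bookkeeping on the expression already produced by Proposition \ref{formal power}.
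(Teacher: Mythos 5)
Your proof is correct and follows essentially the same route as the paper's: interpret $F^*_{\Theta_H,v,\wt{v}}(S)$ as the image of $\wt{F}^*_{\Theta_H,v,\wt{v}}(T)$ under the substitution $T_\alpha \mapsto q^{N_\alpha}S$, $T_\beta \mapsto S$ (well-defined termwise because each slice $\CT_{\Theta_H,N}^{--,*}$ is finite), then specialize the rational expression from the ``moreover'' part of Proposition \ref{formal power}, using $M_s=\sum_{\alpha\in(\Delta_H-\Theta_H)\sqcup C_H}|v_\alpha(s)|\geq 1$ and the unit first/last coefficients of $P_{\Theta,w,s}$. If anything, your bookkeeping is slightly more careful than the paper's, which glosses over clearing the negative powers $S^{-M_s\deg P_{\Theta,w,s}}$ to obtain an honest denominator in $R[S]$ and checking invertibility in $R((S))$.
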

\begin{proof}The inner summation in the definition of $F^*_{\Theta_H,v,\tilde{v}}$ is finite and thus as formal power series,
$F^*_{\Theta_H,v,\tilde{v}}(S)$ is just the evaluation of $\wt{F}^*_{\Theta_H,v,\tilde{v}}(T)$ at   $T_\alpha=q^{N_\alpha}S$, $\alpha\in \Delta_H-\Theta_H$ and  $T_\beta=S$, $\beta\in C_H$. 

Take the expression $$\wt{F}^*_{\Theta_H,v,\wt{v}}(T)=\frac{N(T)}{M(T)\prod_{(\Theta,w,s)}P_{\Theta,w,s}(T^{-|v(s)|})}$$
as in Proposition \ref{formal power}.
Since $\sum_{\alpha\in  (\Delta_H-\Theta_H)\sqcup C_H}|v_\alpha(s)|>0$, the evaluation of $P_{\Theta,w,s}(T^{-(v(s))|}$ at  $T_\alpha=q^{N_\alpha}S$, $\alpha\in \Delta_H-\Theta_H$ and  $T_\beta=S$, $\beta\in C_H$ is a polynomial in $S$ whose first and last coefficients are units. Consequently, $F^*_{\Theta_H,v,\wt{v}}(S)$ is also rational.
\end{proof}

\begin{defn}\label{controllable}
The $G$-spherical variety $Y$, or spherical pair $(G,H)$,  \dfn{admits a reduction structure} if any $\Theta_H\subset \Delta_H$
admits a reduction structures with respect to $G$.
\end{defn}

\begin{remark}We remark that if  $(G,H)$ and $(G',H')$ are spherical pairs such that
\begin{itemize}
    \item there is a subgroup $Z$ of the center of $G$ such that
$G' = G/Z$, $H' = H/(Z \cap H)$ and $Z\cap H$ is anisotropic;
\item or there is an isogeny $\phi: G \ra G'$ mapping $H$ to $H'$, 
\end{itemize} then the quotient map (resp. $\phi$) identifies $\Delta_H$, $\Rat(A_{\emptyset_H})$, $\Delta_G$  with $\Delta_{H^\prime}$, $\Rat(A_{\emptyset_{H^\prime}})$, $\Delta_{G^\prime}$. Under these identifications,  a reduction structure for $\Theta_H$ with respect to $G$ is sent to a reduction structure for $\Theta_{H^\prime}=\Theta_H$  with respect to $G^\prime$. In particular, admitting a reduction structure is preserved when modifying  spherical pairs by the operations above.
\end{remark}
Now we evaluate $F_{\Theta_H,v,\tilde{v}}$ at $S=1$ when the  pair $(G,H)$ is strongly tempered.
Normalize the Haar measure $dh$ on $H(F)$ so that  $\Vol(K_H,dh)=1$ and take $\omega$, $C_{\Theta_H}$ as in Theorem \ref{Cartan} and Proposition \ref{Vol}. We start with the  following result for  complex tempered representations (which will apply to                        $\pi|_x$, $x\in\Sigma$).
\begin{lem}\label{eva}Assume  the spherical pair $(G,H)$ is strongly tempered and admits a reduction structure. Let $\pi$, $\wt{\pi}$ be  irreducible tempered complex $G(F)$-representations equipped with a non-degenerate bi-$H(F)$-invariant pairing 
$\langle-,-\rangle$.  Take any $v\in\pi^{K_H}$ and $\tilde{v}\in\tilde{\pi}^{K_H}$. Then for $\Theta_H\subset\Delta_H$,
$F_{\Theta_H,v,\tilde{v}}(S)$ is regular at $S=1$ and 
$$F_{\Theta_H,v,\tilde{v}}(1) = \sum_{t\in\CT_{\Theta_H}^{--}}\langle\pi(t)v, \tilde{v}\rangle \delta_{P_{\emptyset_H}}^{-1}(t).$$
Moreover, one has  $$I(v, \tilde{v}):=\sum_{w\in\omega}\sum_{\Theta_H}C_{\Theta_H}F_{\Theta_H,v,\wt{\pi}(w^{-1})\wt{v}}(1)=\int_{H(F)}\langle \pi(h)v, \tilde{v}\rangle dh.$$
\end{lem}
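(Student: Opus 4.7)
The plan is to unfold the integral via the Cartan decomposition, then combine the rationality established in Corollary~\ref{formal power cor} with the absolute convergence coming from strong temperedness to read off the value at $S=1$.

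Strong temperedness guarantees that $\int_{H(F)}|\langle\pi(h)v,\tilde{v}\rangle|\,dh<\infty$. Applying the Cartan decomposition $H(F)=\bigsqcup_{w\in\omega,\,t\in\CT_{\emptyset_H}^{-}}K_HwtK_H$, using that $v,\tilde{v}$ are $K_H$-fixed and that $\omega$ normalizes $K_H$, decomposing $\CT_{\emptyset_H}^{-}=\bigsqcup_{\Theta_H\subset\Delta_H}\CT_{\Theta_H}^{--}$, and invoking the volume formula of Proposition~\ref{Vol}, one obtains
\[
\int_{H(F)}\langle\pi(h)v,\tilde{v}\rangle\,dh
=\sum_{w\in\omega}\sum_{\Theta_H\subset\Delta_H}C_{\Theta_H}\sum_{t\in\CT_{\Theta_H}^{--}}\delta_{P_{\emptyset_H}}^{-1}(t)\,\langle\pi(t)v,\tilde{\pi}(w^{-1})\tilde{v}\rangle,
\]
where the relation $\langle\pi(wt)v,\tilde{v}\rangle=\langle\pi(t)v,\tilde{\pi}(w^{-1})\tilde{v}\rangle$ is used, and the multiple series converges absolutely term by term.

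For the first assertion, fix $\Theta_H$ and $\ast\in\{+,0,-\}$ (when $C_H=\emptyset$ only the undecorated case occurs, and the argument is identical). Grouping summands by $N=\sum_{\alpha}|v_{\alpha}(t)|$, the coefficient $c_N$ of $S^N$ in $F^{\ast}_{\Theta_H,v,\tilde{v}}(S)$ is dominated by $\sum_{t\in\CT_{\Theta_H,N}^{--,\ast}}\delta_{P_{\emptyset_H}}^{-1}(t)|\langle\pi(t)v,\tilde{v}\rangle|$ in absolute value, and the absolute integrability above forces $\sum_{N}|c_N|<\infty$. By Corollary~\ref{formal power cor}, $F^{\ast}_{\Theta_H,v,\tilde{v}}(S)=Q(S)/P(S)$ is rational with $P$ having unit constant and leading coefficients. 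Absolute convergence at $S=1$ ensures that the power series defines a continuous function on the closed unit disk, so its analytic continuation (the rational function) cannot have a pole at $S=1$; hence $F^{\ast}_{\Theta_H,v,\tilde{v}}$ is regular at $S=1$, and the value there coincides with the absolutely convergent sum $\sum_{t\in\CT_{\Theta_H}^{--,\ast}}\delta_{P_{\emptyset_H}}^{-1}(t)\langle\pi(t)v,\tilde{v}\rangle$. Summing over $\ast$ yields the claimed identity $F_{\Theta_H,v,\tilde{v}}(1)=\sum_{t\in\CT_{\Theta_H}^{--}}\delta_{P_{\emptyset_H}}^{-1}(t)\langle\pi(t)v,\tilde{v}\rangle$. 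Summing this identity over $w\in\omega$ and $\Theta_H$ with weights $C_{\Theta_H}$ then reproduces the unfolded integral, giving the second assertion $I(v,\tilde{v})=\int_{H(F)}\langle\pi(h)v,\tilde{v}\rangle\,dh$.

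The main subtlety lies in the Abel-type step: a rational function whose power series at the origin converges absolutely at $S=1$ must be regular there with value equal to the sum. This is because if $P(1)=0$, the rational function would blow up as $S\to 1^{-}$ along the real axis, contradicting the continuity of the absolutely convergent power series on the closed disk of radius $1$; equality of the value and the sum then follows by continuity. Everything else is bookkeeping with the Cartan decomposition and the volume formula, and no input beyond the results already established in the paper is required.
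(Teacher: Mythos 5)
Your proposal is correct and takes essentially the same route as the paper's proof. Both proofs unfold $\int_{H(F)}\langle\pi(h)v,\tilde{v}\rangle\,dh$ via the Cartan decomposition (Theorem~\ref{Cartan}) and the volume formula (Proposition~\ref{Vol}), invoke strong temperedness to get absolute convergence of the resulting multiple series, note that this forces the power series $F^{*}_{\Theta_H,v,\tilde{v}}(S)$ to converge absolutely on the closed unit disk, and then combine this with the rationality from Corollary~\ref{formal power cor} to conclude regularity and evaluate at $S=1$. The only difference is that you spell out the ``Abel-type'' step (a rational function whose Maclaurin series converges absolutely at $S=1$ cannot have a pole there, by comparing the boundedness of the power-series limit on the closed disk with the blow-up at a pole) which the paper leaves implicit in the phrase ``As it is rational, one must have $F^{*}_{\Theta_H,v,\tilde{v}}$ is regular at $S=1$.'' This added detail is welcome but is not a different method.
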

\begin{proof}We deal with the case $\sharp C_H=1$ here. Since the irreducible complex representation $\pi$ (hence $\wt{\pi}$) is  tempered, the period integral $$\int_{H(F)}\langle \pi(h)v, \wt{v}\rangle dh$$
is absolutely convergent. Then by Theorem \ref{Cartan} and Proposition \ref{Vol}, the summation
\begin{align*}
&\sum_{w\in \omega}\sum_{\Theta_H\subset \Delta_H}\sum_{t\in \CT_{\Theta_H}^{--}}C_{\Theta_H}\delta_{P_{\emptyset_H}}^{-1}(t)\int_{K_H\times K_H}\langle \pi(t)\pi(k_2)v,\wt{\pi}(w^{-1}) \wt{\pi}((k_1^{-1})\wt{v}\rangle dk_1 dk_2\\
=& \sum_{w\in\omega}\sum_{\Theta_H\subset \Delta_H}\sum_{t\in\CT_{\Theta_H}^{--}}C_{\Theta_H}\langle\pi(t)v, \wt{\pi}(w^{-1})\wt{v}\rangle \delta_{P_{\emptyset_H}}^{-1}(t)
\end{align*}
is  absolutely convergent and equals to the period integral
  (see \cite[Page 149]{Sil79}). Thus  the summation \begin{equation}\label{formal power complex}
  \sum_{t\in\CT_{\Theta_H}^{--,*}}\langle\pi(t)v, \wt{v}\rangle \delta_{P_{\emptyset_H}}^{-1}(t)=
  \sum_{N\geq0}\sum_{t\in\CT_{\Theta_H,N}^{--,*}}\langle\pi(t)v, \tilde{v}\rangle \delta_{P_{\emptyset_H}}^{-1}(t)
\end{equation}
is also  absolutely convergent for each $\Theta_H\subset\Delta_H$ and $*= +,0,-$.
 By the  absolute convergence of (\ref{formal power complex}), one immediately deduces that the formal power series $F^*_{\Theta_H,v,\wt{v}}$ converges absolutely when $|S|\leq1$. As it is rational, one must have 
 $F^*_{\Theta_H,v,\wt{v}}$ is regular at $S=1$ and 
 $$F^*_{\Theta_H,v,\wt{v}}(1)=\sum_{t\in\CT_{\Theta_H}^{--,*}}\langle\pi(t)v, \tilde{v}\rangle \delta_{P_{\emptyset_H}}^{-1}(t).$$
\end{proof}
Now we turn to Conjecture \ref{conj-2}. We need  the following notation:
\begin{defn}\label{locally twisting} Let $\hat{\Sigma}$ be the pre-image of $\Sigma$ via the natural map $\Spec(R\otimes_E \bar{E})\to\Spec(R)$. 
Let $\pi$ be a finitely generated smooth admissible $R[G(F)]$-module such that for any $x\in\hat{\Sigma}$, $$\CE(\hat{\pi}|_x):=\{\tau:\ \bar{E}\hookrightarrow\BC|\ (\hat{\pi}|_x)_\tau:=\hat{\pi}|_x\otimes_{\bar{E},\tau}\BC\ \mathrm{is}\ \mathrm{tempered}\}\neq\emptyset,\quad \hat{\pi}:=\pi\otimes_E\bar{E}.$$ 
We say the {\em discrete support} of $\hat{\pi}$  is {\em rigid} around $x\in\hat{\Sigma}$ if there exist 
 \begin{itemize}
 \item an open neighborhood $x\in V\subset\Spec(R\otimes_E \bar{E})$,
 \item a smooth character  $\chi: M(F) \to \CO_V^\times$ such that $\chi|_x$ is the trivial character,
    \item a parabolic subgroup   $P=MN\subset G$   with Levi factor $M$,
    \item  an irreducible smooth admissible $M(F)$-representation $\sigma$ over $\bar{E}$ such that $\sigma_\tau$ is a discrete series for some $\tau\in\CE(\hat{\pi}|_x)$,
\end{itemize} 
such that $\hat{\pi}|_y\subset I_P^G\sigma\otimes\chi|_y$ for each $y\in\hat{\Sigma}\cap V$. By abuse of notation, we say the discrete support of $\pi$ is rigid around each $x\in\Sigma$ if the discrete support of $\hat{\pi}$ is rigid around each $x\in\hat{\Sigma}$.
\end{defn}

\begin{thm}\label{contro}Assume the spherical $G$-variety $Y$ is strongly tempered  and admits a reduction structure.
Let $\pi$ be a finitely generated smooth admissible $R[G(F)]$-module as in
Conjecture \ref{conj-2}. Then
\begin{itemize}
    \item the rationality conjecture holds for $\pi$,
    \item the meromorphy conjecture  holds for $\pi|_U$ where $U\subset\Spec(R)$ is a Zariski dense open subset; moreover  the meromorphy conjecture holds for $\pi$ if the discrete support of  $\pi$ is rigid
 around each point $x\in\Sigma$.
\end{itemize}
\end{thm}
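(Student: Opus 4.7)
The plan is to construct $\alpha_\pi$ explicitly via the Cartan decomposition of $H(F)$ combined with the rational interpolation of Corollary \ref{formal power cor}, and then verify compatibility with specializations using Lemma \ref{eva}. By standard $K_H$-averaging it suffices to work with vectors $v \in \pi^{A_{\emptyset_H}^\circ}$ and $\wt{v} \in \wt{\pi}^{A_{\emptyset_H}^\circ}$; for these I will set
$$I(v,\wt{v}) := \sum_{w \in \omega}\sum_{\Theta_H \subset \Delta_H} C_{\Theta_H}\, F_{\Theta_H, v, \wt{\pi}(w^{-1})\wt{v}}(1),$$
where each $F_{\Theta_H,v,\wt{v}}(S)\in R(S)$ is the rational function supplied by Corollary \ref{formal power cor}, whose denominator is a product of the finitely many universal polynomials $P_{\Theta,w,s}$ from Proposition \ref{formal power} (independent of $v,\wt{v}$).

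For rationality, I fix $x\in\Sigma$ and any $\tau\in\CE(\pi|_x)$. Base-changing the fibered rational function $F_{\Theta_H,v,\wt{v}}|_x(S)\in k(x)(S)$ along $\tau$ produces a rational function over $\BC$ that coincides, term by term, with an absolutely convergent formal series at $S=1$ by Lemma \ref{eva} applied to the irreducible tempered representation $(\pi|_x)_\tau$. Consequently each denominator evaluated at $S=1$ lies in $k(x)^\times$, so $I(v,\wt{v})|_x\in k(x)$ is well-defined; setting $\alpha_{\pi|_x}(v,\wt{v}) := I(v,\wt{v})|_x$ gives the required $k(x)$-valued pairing, whose independence from $\tau$ is automatic (the fiber rational function is already defined over $k(x)$) and whose bi-$H(F)$-invariance descends from the complex tempered case. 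For the meromorphy on a Zariski dense open, I let $D\in R$ be the product over all triples $(\Theta,w,s)$ arising from the reduction structures of the $P_{\Theta,w,s}(1)$, and take $U = \Spec(R[D^{-1}])$. Over $U$ the function $I(v,\wt{v})$ defines an element of $\Gamma(U,\CO_U)$, producing a bilinear form interpolating $\alpha_{\pi|_x}$ on $U\cap\Sigma$; density of $U$ follows because Lemma \ref{irre} gives irreducible tempered fibers at the generic points of $\Spec(R)$, forcing $D$ to be a nonzerodivisor there.

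For the strong meromorphy under rigidity, one must arrange $D$ to be a unit in a neighborhood of every $x\in\Sigma$. Rigidity provides an open neighborhood $V$ of $x$, a parabolic $P=MN$, a smooth character $\chi:M(F)\to\CO_V^\times$ trivial at $x$, and a discrete series $\sigma$ over $\bar{E}$ such that $\pi|_y \hookrightarrow I_P^G(\sigma\otimes\chi|_y)$ for $y\in V\cap\hat\Sigma$. Applying the geometric lemma to this family identifies the minimal polynomial of $\pi(s)$ acting on each Jacquet module $J_{w^{-1}N_\Theta w}(\pi|_V)$ with $\prod_i(X-a_i\chi(s))$, for scalars $a_i$ determined by the central characters of the constituents of $\sigma$. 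Then $P_{\Theta,w,s}(1) = \prod_i(1-a_i\chi(s))$ is a regular function on $V$; its nonvanishing at $x$ is forced by the absolute convergence of $\alpha_{(\pi|_x)_\tau}$ combined with the discrete-series condition on $\sigma_\tau$, which prevents any $a_i$ from equalling $1$, and shrinking $V$ makes $P_{\Theta,w,s}(1)$ a unit. Gluing these local pairings via the uniqueness of meromorphic interpolation from Section 2.1 yields $\alpha_\pi$ on an open $X'\supset\Sigma$. The main difficulty will be the families-level geometric-lemma computation identifying $P_{\Theta,w,s}$ explicitly, together with the strict inequality $a_i\neq 1$ under $\tau$; the remaining bookkeeping is formal.
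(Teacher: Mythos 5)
Your outline matches the paper's strategy — reduce to $K_H$-averaged vectors, express the Cartan sums as rational functions $F_{\Theta_H,v,\wt v}(S)=Q(S)/P(S)$ with universal denominator via Corollary \ref{formal power cor}, evaluate at $S=1$ using Lemma \ref{eva}, and invoke rigidity to control the denominator in a neighborhood of each $x\in\Sigma$. But two links in your chain are not sound.

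First, for rationality you assert that because $F_{\Theta_H,v,\wt v}|_x(S)$ base-changes to a rational function regular at $S=1$, ``each denominator evaluated at $S=1$ lies in $k(x)^\times$.'' That inference is not valid: regularity of the ratio at $S=1$ does not imply the chosen numerator and denominator are individually nonzero there, only that they vanish to compatible orders. In fact the universal polynomial $P_{\Theta,w,s}$ produced by admissibility of Jacquet modules is only required to annihilate the Jacquet module; it can have $\delta_{P_{\emptyset_H}}(s)$ as a spurious root even when the actual $\pi(s)$-eigenvalues have smaller absolute value (this is precisely what the rigidity hypothesis is designed to rule out by replacing $P_{\Theta,w,s}$ with a minimal polynomial). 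If the denominator were automatically a unit at every $x\in\Sigma$, the rigidity hypothesis would be vacuous. The paper instead expands $P(S)=\sum a_n(S-1)^n$, $Q(S)=\sum b_n(S-1)^n$ and observes that if $r=\ord_{S=1}P|_x$ then $r\le\ord_{S=1}Q|_x$ by Lemma \ref{eva}, so the regular value is $b_r(x)/a_r(x)$ — no nonvanishing of $P|_x(1)$ is needed.

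Second, and relatedly, your construction of the dense open $U=\Spec(R[D^{-1}])$ with $D=\prod P_{\Theta,w,s}(\cdot)$ can fail: $D$ may vanish identically on an irreducible component, and your justification — that Lemma \ref{irre} gives ``irreducible tempered fibers at generic points'' — doesn't make sense as stated, since the residue fields at generic points are not subfields of $\BC$ and ``tempered'' is not defined for them; Lemma \ref{irre} gives only irreducibility. The paper's construction of $U$ is genuinely different: for each minimal prime $\fp_i$ it picks the first coefficient $a_{r(i)}$ in the $(S-1)$-expansion of $P$ that is nonzero modulo $\fp_i$ (such exists because the top coefficient is a unit) and removes $V(\fp_i)\cap V(a_{r(i)})$, a proper closed subset of each component; then $b_{r(i)}/a_{r(i)}$ defines a regular function on $U$. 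This always yields a dense open, with no appeal to temperedness at generic points. A minor further slip: the denominator of $F_{\Theta_H,v,\wt v}(S)$ at $S=1$ is $P_{\Theta,w,s}(\delta_{P_{\emptyset_H}}(s))$, not $P_{\Theta,w,s}(1)$, and the condition from Lemma \ref{eigenvalue} is $|\omega_i(s)|<\delta_{P_{\emptyset_H}}(s)$ rather than $a_i\neq 1$. The rigidity part of your argument otherwise captures the correct mechanism (unramified-twisting family, explicit minimal polynomial, Lemma \ref{eigenvalue}).
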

\begin{proof}  For any $v\in \pi$ ( resp. $\wt{v}\in \wt{\pi}$), let $$v^\prime=\int_{K_H}\pi(k)vdk\quad \mathrm{resp.}\ \wt{v}^\prime=\int_{K_H}\wt{\pi}(k)\wt{v}dk$$
For any $\Theta_H\subset \Delta_H$, write
$F_{\Theta_H,v^\prime,\wt{v}^\prime}(S)=\frac{Q(S)}{P(S)}$ with polynomials $$P(S)=\sum_{n=0}^N a_n^\prime S^n=\sum_{n=0}^Na_n(S-1)^n,\quad Q(S)=\sum_{n\geq0} b_n(S-1)^n\in R[S]$$
as in Corollary \ref{formal power cor}.  In particular,  $P(S)$ is independent of $v$ and $\wt{v}$ and $a_0^\prime$, $a_N = a_N^\prime\in R^\times$. Moreover by Lemma \ref{eva}, for any $x \in \Sigma$,  if  $r:=\ord_{S=1}P|_x\leq N$ 
is the order of $P|_x$ at $S=1$, then $r\leq\ord_{S=1}Q|_x$. Thus, the evaluation 
$F_{\Theta_H,v^\prime,\wt{v}^\prime}(S)|_{x}(1)$ of $F_{\Theta_H,v^\prime,\tilde{v}^\prime}(S)|_{x}$ at $S=1$  is just 
$\frac{b_{r}(x)}{a_{r}(x)}$.

Firstly we  prove  the rationality conjecture. For any $x\in\Sigma$ and $\tau\in \CE(\pi|_x)$, 
$\frac{\tau(Q|_x)}{\tau(P|_x)}$ represents the rational function 
$F_{\Theta_H, \tau(v^\prime|_x), \tau(\wt{v}^\prime|_x)}$ defined for $\tau(\pi|_x)$. 
Set $$I|_x(v|_x,\wt{v}|_x):=\sum_{w\in\omega}\sum_{\Theta_H\subset\Delta_H}C_{\Theta_H}F_{\Theta_H,v^\prime,\wt{\pi}(w^{-1})\wt{v}^\prime}|_{x}(1)$$
Then by Proposition \ref{SRP} and  Lemma \ref{eva}, the rationality conjecture holds for $\pi|_x$ and $I|_x$ is the desired  bi-$H(F)$-invariant bilinear form on $\pi|_x\times\wt{\pi}|_x$.

Now we consider the meromorphy conjecture. Let $\fp_1,\cdots,\fp_{k}$ be the minimal primes of $R$ . For each $1\leq i \leq k$, there exists $0\leq r(i)\leq N$ such that 
$$a_0,\cdots, a_{r(i)-1}\in\fp_i,\quad a_{r(i)}\notin\fp_i.$$
 Set $U=\Spec(R)-Z$ where $Z=\bigcup_{\Theta_H,i}Z(\Theta_H,i)$ with $Z(\Theta_H,i)=V(\fp_i)\cap V(a_{r(i)})\subset\Spec(R)$, $1\leq i\leq k$. Let $\Frac(R)=\prod_i R_{\fp_i}$ be the total quotient ring
of $R$.
Set 
$$I(v,\wt{v}):=\sum_{w\in\omega}\sum_{\Theta_H}C_{\Theta_H}F_{\Theta_H,v^\prime,\wt{\pi}(w^{-1})\wt{v}^\prime}(1)\in \Frac(R),\ F_{\Theta_H,v^\prime,\wt{\pi}(w^{-1})\wt{v}^\prime}(1):= \left(\frac{b_{r(i)}}{a_{r(i)}}\right)_{i}\in \Frac(R).$$
Then for any $x\in\Sigma\cap U$, one has $$I(v,\wt{v})|_x=I|_x(v|_x,\wt{v}|_x),\quad \forall\ v\in\pi,\wt{v}\in\wt{\pi}.$$
Since $\Sigma\cap U$ is Zariski dense in $\Spec(R)$, $I(-,-)$ is forced to be bi-$H(F)$-invariant and bilinear by the interpolation property. 
Thus, the meromorphy conjecture holds for $\pi|_U$.

 For the meromorphy conjecture of $\pi$, it suffices to show $I(-,-)$ 
actually interpolates $I|_x(-,-)$ for each $x\in \Sigma$. For this, one can base change from $R$ to $R\otimes_E \bar{E}$ and consider the corresponding result for $\hat{\pi}$.  Clearly for $x\in\hat{\Sigma}$, $I(-,-)$ (defined for $\hat{\pi}$)
 interpolates $I|_x(-,-)$ if for all $\Theta_H\subset \Delta_H$ and $*=+,0,-$ (resp. $\emptyset$) if $\sharp C_H=1$ (resp. $C_H=\emptyset$), $F^*_{\Theta_H,v,\wt{v}}(1)$ (defined for $\hat{\pi}$) is regular at $x$. By Proposition \ref{formal power} and (the proof) of Corollary  \ref{formal power cor},  $F_{\Theta_H,v,\wt{v}}^*(1)$ is regular at $x$ if one can choose $P_{\Theta,w,s}$ such that 
$$ P_{\Theta,w,s}|_x(\delta_{P_{\emptyset_H}}(s))\neq0,\ \forall\ (\Theta,w,s)\in \bigsqcup_{\Theta_H\subset \Theta_H^\prime}S(\Theta_H^\prime,*).$$

Assume the discrete support of $\hat{\pi}$ is rigid around each $x \in \hat{\Sigma}$ and take $V$, $\sigma$ and $\chi$ as in Definition \ref{locally twisting}. Then 
by the theory of Jacquet modules (see \cite[Section 6.3]{Cas95}),  for any $(\Theta,w,s)$
\begin{itemize}
    \item the central characters  of  irreducible sub-quotients of $J_{w^{-1}N_\Theta w}(I_P^G\sigma\otimes\chi|_y)$ are 
  interpolated by $\CO_V^\times$-valued characters, say $\{\omega_i\}_{i\in I}$;
  \item the multiplicity of irreducible sub-quotients of $J_{w^{-1}N_\Theta w}(I_P^G\sigma\otimes\chi|_y)$  with  the same central character are uniformly bounded,  say by $\{N_i\}_i$,
\end{itemize}
when $y$ varies in $V$.
Then locally around $x$,  $\prod_{i\in I}(\hat{\pi}(s)-\omega_i(s))^{N_i}$  kills $J_{w^{-1}N_\Theta w}(\hat{\pi})$ and we can take   $P_{\Theta,w,s}(T)=\prod_{i\in I}(T-\omega_i(s))^{N_i}$. Since  $I_P^G\sigma_\tau$ is a direct sum of tempered representations for some $\tau\in\CE(\hat{\pi}|_x)$, one has $P_{\Theta,w,s}|_x(\delta_{P_{\emptyset_H}}(s))\neq0$ by   Lemma \ref{eigenvalue} below.
\end{proof}
\begin{lem}\label{eigenvalue}Let $(G,H)$ be a strongly tempered spherical pair  and  $\pi$ be an  irreducible tempered complex $G(F)$-representation. Let $*$ be one of $+,0,-$ if $\sharp C_H=1$ or $\emptyset$ if $C_H=\emptyset$. Let $\Theta_H \subset \Delta_H$ and 
 consider a triple $(\Theta,w,s)$ where  $\Theta\subset \Delta_G$, $w\in W$ and 
$s\in w^{-1}A_{\Theta}^-w\cap A_{\emptyset_H}^-$ with image in $\CT_{\emptyset_H}^-$ belongs to $\CT_{\Theta_H}^--\{e\}$. Assume that
for some $\epsilon>0$ small enough, $$\left\{a\in A_{\Theta_H}^{--,*} \Big| \ \mathrm{if}\ waw^{-1}\in A_{\emptyset_G}^-,\ \exists \alpha
\in \Delta_G-\Theta, |\alpha(waw^{-1})|\geq \epsilon\right\}\neq A_{\Theta_H}^{--,*}.$$
Then any eigenvalue for the action of $\pi(s)$ on the Jacquet module $J_{w^{-1}N_\Theta w}(\pi)$ has absolute value strictly smaller than $\delta_{P_{\emptyset_H}}(s)$.  
 \end{lem}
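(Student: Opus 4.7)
The plan is to combine Casselman's canonical pairing (Theorem \ref{cano}) with the absolute convergence of matrix coefficients over $H(F)$ guaranteed by strong temperedness: from a putative eigenvalue $\lambda$ of $\pi(s)$ on $J_{w^{-1}N_\Theta w}(\pi)$ with $|\lambda|\geq\delta_{P_{\emptyset_H}}(s)$, I will produce a divergent subseries of $\int_{H(F)}|\langle\pi(h)v,\wt{v}\rangle|\,dh$, contradicting strong temperedness.

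The hypothesis of the lemma furnishes an element $a_0\in A_{\Theta_H}^{--,*}\cap A_{\emptyset_H}^-$ with $wa_0w^{-1}\in A_{\emptyset_G}^-$ and $|\alpha(wa_0w^{-1})|<\epsilon$ for all $\alpha\in\Delta_G-\Theta$. Replacing $a_0$ by a sufficiently high power $a_0^M$, I may assume $|\alpha(wa_0^Mw^{-1})|$ falls below the Casselman constant $\epsilon_0$ from Theorem \ref{cano} at the parabolic $P_\Theta$. Because $wsw^{-1}\in A_\Theta^-$ gives $|\alpha(wsw^{-1})|\leq 1$ for $\alpha\in\Delta_G-\Theta$, the product $w(s^na_0^M)w^{-1}$ lies in $A_{\emptyset_G}^-$ and satisfies the Casselman smallness condition uniformly in $n\geq 0$.

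Fix a lift $w\in G(F)$. The map $[v]\mapsto[\pi(w)v]$ is an isomorphism $J_{w^{-1}N_\Theta w}(\pi)\xrightarrow{\sim}J_{N_\Theta}(\pi)$ intertwining $\pi(g)$ with $\pi(wgw^{-1})$, and similarly on the dual side. Transporting Casselman's formula through this isomorphism yields, for every $n\geq 0$,
\[\langle\pi(s^na_0^M)v,\wt{v}\rangle=\bigl(\pi_{w^{-1}N_\Theta w}(s^na_0^M)[v],\,[\wt{v}]\bigr)_{w^{-1}N_\Theta w}.\]
Let $[v]$ be a non-zero generalized $\lambda$-eigenvector of $\pi(s)$ on $J_{w^{-1}N_\Theta w}(\pi)$; since $a_0$ commutes with $s$, $\pi(a_0^M)$ preserves this generalized eigenspace, and I may take $[v]$ simultaneously to be a generalized $\mu$-eigenvector of $\pi(a_0^M)$ for some $\mu\in\BC^\times$. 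Non-degeneracy of Casselman's canonical pairing allows me to choose $[\wt{v}]\in J_{w^{-1}N_\Theta^- w}(\wt{\pi})$ detecting the leading Jordan block, so that the pairing above equals $Q(n)\,\lambda^n\,\mu\,\kappa$ with $Q\in\BC[n]$ of non-zero leading term and $\kappa\neq 0$. Lift to vectors $v\in\pi$, $\wt{v}\in\wt{\pi}$ and fix a compact open $K_0\subset K_H$ fixing both.

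By strong temperedness, $\int_{H(F)}|\langle\pi(h)v,\wt{v}\rangle|\,dh<\infty$. For $n\gg 0$ the elements $t_n:=s^na_0^M$ all lie in a single stratum of the Cartan decomposition of $H(F)$, so the volume formula (Proposition \ref{Vol}) gives $\Vol(K_H t_n K_H)=C\,\delta_{P_{\emptyset_H}}^{-n}(s)\,\delta_{P_{\emptyset_H}}^{-M}(a_0)$. Bounding the inner $K_H\times K_H$ integral from below by $\Vol(K_0)^2|\langle\pi(t_n)v,\wt{v}\rangle|$ using the $K_0$-invariance, the full integral dominates a positive constant times $\sum_{n\gg 0}|Q(n)|\,|\lambda|^n\,\delta_{P_{\emptyset_H}}^{-n}(s)$. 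Polynomial growth of $|Q(n)|$ then forces $|\lambda|<\delta_{P_{\emptyset_H}}(s)$. The main technical obstacle is the first step: the lemma's hypothesis is precisely what allows Casselman's asymptotic formula to apply at the parabolic $w^{-1}P_\Theta w$ uniformly along the ray $\{s^na_0^M\}_{n\geq 0}$, since without such an $a_0$ one could only apply Casselman at some larger $P_{\Theta'}\supsetneq P_\Theta$ and would consequently control only eigenvalues on the quotient $J_{w^{-1}N_{\Theta'}w}(\pi)$, which is insufficient for the downstream application in Theorem \ref{contro}.
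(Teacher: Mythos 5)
Your proof is correct and follows essentially the same approach as the paper's: both use the lemma's hypothesis to produce $a\in A_{\Theta_H}^{--,*}$ with $waw^{-1}\in A_{\emptyset_G}^-$ small enough to apply Casselman's canonical pairing (Theorem \ref{cano}) along the ray $\{as^n\}_{n\geq 0}$, and both combine absolute convergence from strong temperedness with the volume formula to force the eigenvalue bound $|\lambda|<\delta_{P_{\emptyset_H}}(s)$. The only real difference is expository: you make explicit the choice of a generalized eigenvector $[v]$ and a dual vector $[\wt{v}]$ detecting the leading Jordan block, whereas the paper decomposes an arbitrary $v=\sum_i v_i$ and leaves implicit the selection of $v,v^\vee$ needed to isolate each eigenvalue contribution -- so your version is slightly more careful but no more general.
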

 \begin{proof}Let $\{\chi_i\}_{i\in I}$ be the set of central characters of irreducible sub-quotients of the finite length smooth admissible $w^{-1}M_\Theta(F) w$-representation $J_{w^{-1}N_\Theta w}(\pi)$. For each $\chi_i$, let $N_i$ be the multiplicity of the irreducible sub-quotients with central character $\chi_i$. Consider the generalized eigenspace
 $$V_i:=\left\{v\in J_{w^{-1}N_\Theta w}(\pi) \Big| (\pi(a)-\chi_i(a))^{N_i}v=0,\ \forall\ a\in  w^{-1}A_{\Theta}w\right\}.$$
 Then as $w^{-1}A_{\Theta}w$-representations, 
 $$J_{w^{-1}N_\Theta w}(\pi)=\bigoplus_i V_i.$$
By assumption, there exists $a\in A_{\Theta_H}^{--,*}$ such that $waw^{-1}\in A_{\emptyset_G}^-$ and for any $\alpha\in\Delta_G-\Theta$, $|\alpha(waw^{-1})|<\epsilon$.
Let $(-,-)$ be the contraction on $\pi\times\pi^\vee$. 
By Theorem \ref{cano},
$$(\pi(as^n)v,v^\vee)=(\pi(as^n)v,v^\vee)_{w^{-1}N_\Theta w},\quad \forall\ n\geq0,\ v\in \pi,\ v^\vee\in \pi^\vee.$$
Assume that $v$ (as an element in  $J_{w^{-1}N_\Theta w}(\pi)$) decomposes as  $$v=\sum_i v_i,\quad v_i\in V_i.$$
On one hand, similar to Lemma \ref{eva}, the summation 
$$\sum_{n\geq0}(\pi(as^n)v,v^\vee)\delta_{P_{\emptyset_H}}^{-1}(as^n)$$
converges absolutely as  $\pi$ is tempered and $(G,H)$ is strongly tempered. 
On the other hand,
$$(\pi(as^n)v_i,v^\vee)_{w^{-1}N_\Theta w}=\chi_i(s)^{n-N_i}\sum_{j=0}^{N_i-1} {{n}\choose {j}}
\chi_i^{N_i-j}(s)\left((\pi(s)-\chi_i(s))^j\pi(a)v_i, v^\vee\right),\quad n\geq N_i.$$
Thus for each $i$, one must have 
$$|\chi_i(s)\delta_{P_{\emptyset_H}}^{-1}(s)|<1.$$
We are done.
\end{proof}
We expect the discrete support of  $\pi$ to  be rigid around each $x\in\Sigma$   in practice.  The following proposition is due to  Prof. G. Moss.
\begin{prop}\label{quasi-split} Assume $E=\bar{E}$ and let $\pi$ be a finitely generated  smooth admissible torsion-free $R[G(F)]$-module such that
\begin{itemize}
    \item $\pi|_x$ is irreducible tempered  for each $x\in\Sigma$,
    \item the natural morphism $R\to\End_{R[G(F)]}(\pi)$ is an isomorphism.
\end{itemize} Then the discrete support of  $\pi$ is rigid around  $x\in\Sigma$ if $\pi|_x=I_P^G\sigma$ where $P=MN\subset G$ is a parabolic subgroup with Levi factor $M$ and $\sigma$ is an irreducible  regular supercuspidal $M(F)$-representation over $E$.
\end{prop}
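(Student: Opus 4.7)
The plan is to exploit the admissibility of Jacquet modules (Theorem~\ref{admjac}) together with Bernstein's decomposition of smooth $R[M(F)]$-modules by inertial classes, combined with the rigidity of supercuspidal representations in families. Concretely, I will analyze $J_N(\pi)$, isolate the component corresponding to the inertial class $\fs=[M,\sigma]_M$, and show that this component is the twist of $\sigma$ by a smooth character of $M(F)$ defined over a Zariski neighborhood of $x$.

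First, $J_N(\pi)$ is a finitely generated smooth admissible $R[M(F)]$-module by Theorem~\ref{admjac}. Its specialization $J_N(\pi)|_x=J_N(I_P^G\sigma)$ is described by Bernstein--Zelevinsky's geometric lemma: it admits a filtration whose graded pieces are the Weyl conjugates $w(\sigma)$ as $w$ runs over a set of double coset representatives in $W_M\backslash W_G/W_M$. Since $\sigma$ is regular supercuspidal, these pieces are pairwise non-isomorphic irreducible supercuspidal $M(F)$-representations lying in pairwise distinct Bernstein inertial classes of $M(F)$, so the filtration splits and $\sigma$ itself occurs as a direct summand with multiplicity one. Applying the Bernstein decomposition of smooth $R[M(F)]$-modules by inertial classes then gives, on some Zariski open neighborhood $V$ of $x$, a direct sum decomposition $J_N(\pi)=\tau\oplus\tau'$ in which $\tau$ is the $\fs$-isotypic component and $\tau|_x\cong\sigma$.

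The heart of the argument is to show that, after possibly shrinking $V$, one has $\tau\cong\sigma\otimes_E\chi$ for some smooth character $\chi:M(F)\to\CO_V^\times$ with $\chi|_x$ trivial. Via a Bushnell--Kutzko type $(J,\rho)$ for $\sigma$, the block $\fs$ of smooth $M(F)$-representations is equivalent to the category of modules over the commutative Hecke algebra $\CH(M(F),J,\rho)\cong\CO(\Omega(\fs))$. The regular supercuspidal hypothesis guarantees that $\sigma$ has trivial unramified self-twists, so $\Omega(\fs)$ identifies with the torus $X(M)$ of unramified characters of $M(F)$, and that the finite Weyl group $W_\fs$ acts freely at $[\sigma]$. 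Under this equivalence $\tau$ corresponds to a morphism $V\to\Omega(\fs)$ sending $x\mapsto[\sigma]$, which is precisely the data of a smooth unramified character $\chi:M(F)\to\CO_V^\times$ with $\chi|_x$ trivial. Equivalently, one may bypass the type theory and argue via the Bernstein center: the action of $\CZ(\fs)$ on $\pi$ factors through $R\cong\End_{R[G(F)]}(\pi)$, producing a morphism $V\to\Omega(\fs)/W_\fs$ whose Zariski-local lift to $\Omega(\fs)$ (enabled by the freeness of $W_\fs$ at $[\sigma]$, and by $E=\bar{E}$) yields the character $\chi$.

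Finally, Frobenius reciprocity turns the projection $J_N(\pi)\twoheadrightarrow\tau\cong\sigma\otimes\chi$ into an $R[G(F)]$-equivariant morphism $f:\pi\to I_P^G(\sigma\otimes\chi)$, non-zero by construction at $x$. For $y\in\hat{\Sigma}\cap V$, the specialization $f|_y:\pi|_y\to I_P^G(\sigma\otimes\chi|_y)$ is non-zero on a Zariski-dense set containing $x$, and after possibly shrinking $V$ is non-zero at every $y\in\hat{\Sigma}\cap V$ by upper semicontinuity; irreducibility of $\pi|_y$ then forces $f|_y$ to be injective, which is precisely the desired rigidity statement from Definition~\ref{locally twisting}. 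The main obstacle is the step $\tau\cong\sigma\otimes\chi$: this is where the regular supercuspidal hypothesis is used essentially, because without it the block $\fs$ would be equivalent to modules over a non-commutative Hecke algebra (a crossed product by $W_\fs$), and no Zariski-local lift $V\to\Omega(\fs)$ of the Bernstein-center map $V\to\Omega(\fs)/W_\fs$ would exist in general.
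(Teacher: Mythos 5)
Your proof takes a genuinely different route from the paper's. You compute the Jacquet module $J_N(\pi)$, apply the Bernstein--Zelevinsky geometric lemma and the Bernstein decomposition for $R[M(F)]$-modules to isolate the $[M,\sigma]_M$-component $\tau$, and then realize $\tau$ as $\sigma\otimes\chi$ via the supercuspidal-block equivalence, closing with Frobenius reciprocity. The paper, by contrast, never takes a Jacquet module of $\pi$: it works entirely with the Bernstein center. Starting from $\End_{R[G(F)]}(\pi)\cong R$, it gets $Z(G)_\fs\to R$; the regularity hypothesis makes the Harish-Chandra morphism $Z(G)_\fs\to Z(M)_\ft$ étale at $\sigma$, producing a local section $Z(M)_\ft\to Z(G)_\fs\to R$ that directly reads off the cuspidal support of $\pi|_y$ as $\sigma\otimes\chi|_y$. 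Both approaches hinge on the same regularity hypothesis and both encounter the same delicacy of extracting an honest $\CO_V^\times$-valued character $\chi$ from a section defined only up to the finite group $W_\fs$ (resp. the self-twist group), so your method is a legitimate alternative.

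There are two substantive gaps. First, you write that the regularity hypothesis ``guarantees that $\sigma$ has trivial unramified self-twists, so $\Omega(\fs)$ identifies with the torus $X(M)$.'' Regularity says the stabilizer of $[\sigma]$ in $W_\fs = N_G(M)/M$ (acting up to unramified twist) is trivial; it says nothing a priori about unramified self-twists of $\sigma$, i.e.\ about $\{\chi\in X(M):\sigma\otimes\chi\cong\sigma\}$. You do use regularity correctly in the geometric-lemma step (to show the graded pieces $w(\sigma)$ for $w\neq e$ lie in inertial classes different from $[M,\sigma]_M$), but the identification $\Omega(\fs)\cong X(M)$ is a separate claim that your argument does not justify. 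Second, Definition~\ref{locally twisting} requires $\sigma_\tau$ to be a discrete series for some $\tau\in\CE(\hat\pi|_x)$, and a supercuspidal representation is a discrete series only if it is unitary. The paper verifies this using the temperedness of $\pi|_x$: $\pi|_x$ embeds into $I_{P'}^G\tau'$ for a discrete series $\tau'$ sharing the cuspidal support $\sigma$, and \cite[Theorem 6.5.1]{Cas95} then forces $\sigma$ to be unitary. Your proof never uses the temperedness hypothesis and hence never establishes this part of the definition. A smaller remark: the passage from ``$f|_x\neq0$'' to ``$f|_y\neq 0$ for $y$ near $x$'' is cleaner without upper semicontinuity --- once $\tau$ is a direct summand of $J_N(\pi)$, the projection $J_N(\pi)\twoheadrightarrow\tau\cong\sigma\otimes\chi$ is surjective, hence so is its specialization at every $y$, so $f|_y\neq 0$ by Frobenius reciprocity without any openness argument.
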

\begin{proof}Base change along any $\tau\in\CE(\pi|_x)$, we may assume $E=\BC$. Since $\pi|_x$ is tempered,   $\pi|_x\hookrightarrow I_{P^\prime}^G\tau$ for some parabolic subgroup $P^\prime=M^\prime N^\prime \subset G$ and discrete series $M^\prime(F)$-representation $\tau$. The $M^\prime(F)$-representation $\tau$ also has cuspidal support $\sigma$. Thus by \cite[Theorem 6.5.1]{Cas95} for $\tau$, one has $\sigma$ is unitary (hence a discrete series). Let $Z(G)$ and $Z(M)$ be the Bernstein center for smooth $G(F)$ and $M(F)$-representations. Since $R\cong \End_{R[G(F)]}(\pi)$, there exists a morphism $Z(G)\to R$ such that for any $y\in\Spec(R)$, the composition 
$$Z(G)\to R \to k(y)$$
factors through the action of $Z(G)$ on $\pi|_y$. Let $\fs$ (resp. $\ft$ ) be the $G$-inertial (resp. $M$-inertial) class of $[M,\sigma]$   and $Z(G)_{\fs}$ (resp. $Z(M)_\ft$) be the corresponding component.  Then up to replacing $\Spec(R)$ by its connected component containing $x$, we may assume $Z(G)\to R$ factors through $Z(G)_\fs\to R$.

 The Harish-Chandra morphism $Z(G)\to Z(M)$ (see \cite[Theorem 4.1]{DHK22}) induces a finite morphism $Z(G)_\fs\to Z(M)_\ft$, which is locally \'etale at the point $\sigma$ by the regularity assumption.  Locally around $\sigma$, we can take a section and by abuse of notation, we denote this section  by  $Z(M)_\ft\to Z(G)_\fs$. Then the composition
$$Z(M)_\ft\to Z(G)_\fs\to R\to k(x)$$
is given by the action of $Z(M)_\ft$ on $\sigma$. Let $M_0(F)\subset M(F)$ be the intersection of the kernels of all unramified characters of $M(F)$. Then $M(F)/M_0(F)$ is a free abelian group of finite rank. 
Let $$\chi:\ M(F)\to \BC[M(F)/M_0(F)];\quad m\mapsto [m]$$ be the universal unramified character of $M(F)$. Since $\sigma$ is supercuspidal, $Z(M)_\ft$ is just the ring of functions on the orbit of $\sigma$ under unramified twisting. Hence there exists an open neighborhood   $x\in V\subset \Spec(R)$  such that for any  $y\in V$, the composition
$$Z(M)_\ft\to Z(G)_\fs\to R\to k(y)$$
is simply given by the action of $Z(M)_{\ft}$ on $\sigma\otimes\chi|_{y}$. Then by the definition of the Harish-Chandra morphism,  the cuspidal support of  $\pi|_y$ is $\sigma\otimes\chi|_{y}$ for $y\in V$.
Then for $y\in \Sigma\cap  V$,  $\pi|_y\hookrightarrow I_P^G\sigma\otimes\chi|_{y}$ and the desired result follows.
\end{proof}

When $G$ is a product of general linear groups, the above result can be improved thanks to the theory of extended Bernstein varieties.
\begin{prop}\label{GL(n)}Assume $G$ is a product of general linear groups and  $E=\bar{E}$. Let $\pi$ be a finitely generated smooth admissible torsion-free $R[G(F)]$-module such that $\pi|_x$ is irreducible and tempered at any $x\in\Sigma$. Then the discrete support of $\pi$ is rigid around each $x\in\Sigma$.
\end{prop}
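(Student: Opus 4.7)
The plan is to combine the Bernstein decomposition, which tracks how cuspidal supports vary in the family, with Zelevinsky's classification for general linear groups, which gives combinatorial rigidity of the segment structure underlying a discrete series. Since $G$ is a product of $\GL_n$'s, every irreducible tempered representation of $G(F)$ is a parabolic induction from a discrete series and such an induction is automatically irreducible. So for the given $x\in\Sigma$ we may write $\pi|_x\simeq I_P^G\sigma$ with $P=MN\subset G$ a parabolic and $\sigma$ a discrete series of $M(F)$; let $(L,\tau)$ be the cuspidal support of $\sigma$.

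Next I would shrink $\Spec R$ to an affine open neighborhood of $x$ on which the action of the Bernstein center $Z(G)$ on the finitely generated module $\pi$ factors through the single component $Z(G)_{\fs}$ attached to the inertial class $\fs=[L,\tau]_G$. This yields a morphism $f:\Spec R\to\Omega_{\fs}$ to the Bernstein variety, which at each closed point $y$ records (the $W(\fs)$-orbit of) the cuspidal support of $\pi|_y$.

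Inside $\Omega_{\fs}$ consider the closed subvariety $\Omega_{\fs}^{\mathrm{ds}}$ of inertial classes whose cuspidal representatives have the same segment structure as $\tau$; by Zelevinsky, $\Omega_{\fs}^{\mathrm{ds}}$ is identified, up to a finite cover, with the torus $\Psi(M)$ of unramified characters of $M(F)$, the trivial character $1_M$ corresponding to the class of $\tau$. For every $y\in\Sigma$, the assumption that $\pi|_y$ is irreducible tempered forces its cuspidal support to be segment-type, so $f(y)\in\Omega_{\fs}^{\mathrm{ds}}$; by Zariski density of $\Sigma$, the whole map $f$ factors through $\Omega_{\fs}^{\mathrm{ds}}$. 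Shrinking further to a small affine $V\ni x$, I would lift $f|_V$ along the finite cover $\Psi(M)\twoheadrightarrow\Omega_{\fs}^{\mathrm{ds}}$ to a morphism $V\to\Psi(M)$ with $x\mapsto 1_M$, equivalently a smooth character $\chi:M(F)\to\CO_V^\times$ with $\chi|_x$ trivial. For each $y\in\Sigma\cap V$ the cuspidal support of $\pi|_y$ then agrees with that of $I_P^G(\sigma\otimes\chi|_y)$; since both are irreducible tempered, the Langlands-Zelevinsky classification forces $\pi|_y\cong I_P^G(\sigma\otimes\chi|_y)$, and in particular $\pi|_y\hookrightarrow I_P^G(\sigma\otimes\chi|_y)$, as required.

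The main obstacle is the lifting step: the finite cover $\Psi(M)\to\Omega_{\fs}^{\mathrm{ds}}$ can be ramified at the image of $x$ when $\sigma$ admits non-trivial unramified self-twists, so a genuine section on $V$ may fail to exist. I would handle this by passing to an \etale cover of a neighborhood of $x$, which is harmless because the statement concerns only a neighborhood of $x\in\hat{\Sigma}$ after base change to $\bar{E}$, and then using the combinatorial rigidity of segments to single out the canonical branch pinned down by the base point $\chi|_x=1_M$.
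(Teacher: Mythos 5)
Your outline shares the right spirit with the paper's proof—track cuspidal supports via the Bernstein center and exploit Zelevinsky's classification for $\GL_n$—but it replaces the paper's central tool (Disegni's co-Whittaker modules over the \emph{extended} Bernstein variety) with an ad hoc lifting argument, and this is exactly where it has genuine gaps. First, to get a morphism $f\colon \Spec R\to\Omega_\fs$ you need the Bernstein center to act via a ring map $Z(G)_\fs\to R$, which requires $\End_{R[G(F)]}(\pi)\cong R$; this is not among the hypotheses. The paper derives it (after shrinking) from Disegni's \cite[Lemma 4.2.5]{Dis20} via the co-Whittaker structure. Second, and more seriously, your claim that $f(y)\in\Omega^{\mathrm{ds}}_\fs$ for \emph{all} $y\in\Sigma$ assumes that every $\pi|_y$ has the same segment shape (multi-partition) as $\pi|_x$. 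Temperedness only forces \emph{some} segment structure, not the specific one of $\tau$: for $\GL_4$ with supercuspidal support $(1,1,1,1)$ the shapes $[1,1,1,1]$, $[2,1,1]$, $[2,2]$, $[3,1]$, $[4]$ all yield irreducible tempered representations in the same inertial class, sitting in different strata of $\Omega_\fs$. Zariski density of $\Sigma$ is no help unless you first know the shape is constant on $\Sigma$; without that, $f$ need not land in the single stratum $\Omega_\fs^{\mathrm{ds}}$. Third, the lifting along the possibly ramified finite cover $\Psi(M)\to\Omega_\fs^{\mathrm{ds}}$ is precisely the hard step; calling on an \'etale cover and ``combinatorial rigidity'' does not supply the canonical section. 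In the paper, all three issues are handled at once by mapping $\Spec R$ to the component $\fX'_{[\fs],[t]}$ of Disegni's extended Bernstein variety (which fixes the multi-partition $[t]$ from the generic fiber via Lemma~\ref{irre}) and then using \cite[Lemma 4.2.6]{Dis20} to identify $\pi$ with $\alpha^*\fM$ for the universal torsion-free co-Whittaker module $\fM$—that identification simultaneously forces the shape to be $[t]$ near $x$, produces the smooth $\CO_V^\times$-valued character $\chi$, and yields $\pi|_y\hookrightarrow I_P^G(\sigma\otimes\chi|_y)$.

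So the route you propose is not an alternative proof but an under-specified sketch of what the co-Whittaker/extended-Bernstein-variety machinery delivers; without invoking that machinery (or some equivalent substitute for the three points above) the argument does not close.
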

\begin{proof}We will briefly indicate how to deduce this result from the theory of co-Whittaker modules and extended Bernstein varieties summarized in  \cite{Dis20}. We will use the notations in \textit{loc.cit}. Note that $\pi|_x$ is generic at each $x\in\Sigma$.
Assume $G=\prod_{i} G_i$ with $G_i=\GL_{n_i}$ for some $n_i\in\BN$. 
Let $\pi_i$ be the top derivative of $\pi$ with respect to $\prod_{j\neq i}G_j$. Then by \cite[Lemma 3.1.5]{EH14},  $\pi_i$ is a finitely generated  smooth admissible torsion-free $\CO_X[G_i]$-module for each $i$. By \cite[Lemma 4.2.1,4.2.3,4.2.5]{Dis20}, up to shrinking $\Spec(R)$ to an open subset containing $\Sigma$,  $\pi$ and $\pi_i$ are all co-Whittaker. Moreover by \cite[Lemma 4.2.6]{Dis20}, one has $\pi=\otimes_{i=1}^n\pi_i$ up to furthur shrinking the open subset. Thus we are  reduced  to the case $G=\GL_n$, which we assume below.

 By \cite[Lemma 4.2.5]{Dis20}, $\End_{R[G(F)]}(\pi)=R$. For each generic point $\eta\in\Spec(R)$, $ \pi|_\eta$  is absolutely irreducible by Lemma \ref{irre} and thus 
 determines an inertial type $[\fs]$ and a multi-partition $[t]$  by \cite[Section 2.2]{Dis20}.  Then by \cite[Section 3.3]{Dis20}, up to replacing $\Spec(R)$ by its connected component containing $x$, there is a
 map $\alpha$ from $\Spec(R)$ to  the component $\fX_{[\fs],[t]}^\prime$ of the extended Bernstein variety.
Let  $\fM$ be the torsion-free co-Whittaker module, whose construction is explained in the proof of \cite[Lemma 5.2.2]{Dis20}, over $\fX_{[\fs],[t]}'$. Now by \cite[Lemma 4.2.6]{Dis20},   $\alpha^*\fM\cong \pi$ on an open  neighborhood of $x\in\Sigma$. 

By the construction of $\fM$ and the classification results of $\GL_n(F)$-representations, we are done. 
\end{proof}

The most important source of  spherical varieties admitting reduction structures is strongly tempered spherical varieties without type
$N$-roots.
\begin{thm}\label{crucial} For $G$ split, all strongly  tempered spherical $G$-varieties  without type $N$-roots admit reduction structures.
\end{thm}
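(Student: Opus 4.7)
The proof will proceed case by case through the classification of strongly tempered spherical $G$-varieties without type $N$-roots for split $G$, as tabulated in \cite[Section 1.1]{WZ21}. For each pair $(G,H)$ on this list, and for each $\Theta_H \subset \Delta_H$ (together with each sign $* \in \{+,0,-\}$ when $\sharp C_H = 1$), I will exhibit an explicit finite set $S(\Theta_H,*)$ of triples $(\Theta, w, s)$ and check the finiteness conditions (F1) and (F2) directly. First I would fix a uniform choice $A_{\emptyset_H} \subset A_{\emptyset_G}$ compatible with the parabolic data, and record in each case the map $X^*(A_{\emptyset_G}) \to X^*(A_{\emptyset_H})$ induced by the inclusion, together with the split rank of $Z_H$ (which controls whether $C_H=\emptyset$ or $\sharp C_H=1$).

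The basic construction in each case is guided by the geometry already visible in Examples \ref{Triple} and \ref{Wal}. For each simple root $\alpha \in \Delta_H - \Theta_H$, I would select a triple $(\Theta_\alpha, w_\alpha, s_\alpha)$ so that: (i) $s_\alpha$ strictly contracts along $\alpha$, so that iterating $T_{s_\alpha}$ peels off $\alpha$-strips of $\CT_{\Theta_H}^{--,*}$, yielding a decomposition of $\CT_{\Theta_H}^{--,*} - T_{s_\alpha^n}(\CT_{\Theta_H}^{--,*})$ into finitely many translates of cones $\CT_{\Theta_H'}^{--,*}$ with $\Theta_H \subsetneq \Theta_H'$, as required by (F1); (ii) after conjugation by $w_\alpha \in W_G$, the element $s_\alpha$ lies in the antidominant chamber $A_{\Theta_\alpha}^-$ of $G$; and (iii) $\Theta_\alpha \subset \Delta_G$ is taken as large as possible subject to (ii). Condition (F2) then follows because $\Delta_H - \Theta_H$ is a basis of the rational character group of $\CT_{\Theta_H}^{--}$ modulo $C_H$, so the joint vanishing (or smallness) conditions coming from $\{(\Theta_\alpha,w_\alpha,s_\alpha)\}$ pin down $\CT_{\Theta_H}^{--,*}$ to a compact region; minimality is checked by observing that removing any single triple leaves an unbounded direction. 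When $\sharp C_H = 1$, the case $*=-$ is handled by composing with a suitable long Weyl element of $G$ to reduce to the $*=+$ case, exactly as sketched in the introduction for the $(\GL_3 \times \GL_2,\GL_2)$ example.

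The main obstacle is that the existence of the conjugate $w_\alpha s_\alpha w_\alpha^{-1} \in A_{\Theta_\alpha}^-$ with $s_\alpha \in A_{\emptyset_H}^-$ requires non-trivial compatibility between the cocharacter lattices of $A_{\emptyset_H}$ and $A_{\emptyset_G}$. This is precisely where the hypothesis of no type $N$-roots enters: it rules out the spherical varieties in which the spherical root system interacts with $\Delta_G$ in ways that obstruct lifting contracting elements of $A_{\emptyset_H}$ to antidominant elements of $A_{\emptyset_G}$ after Weyl conjugation. Concretely, working through the Wan-Zhang list (triple products, Gan-Gross-Prasad pairs for general linear and special orthogonal groups, and the remaining low-rank cases) one verifies in each instance that suitable $w_\alpha$ and $\Theta_\alpha$ exist, and then that the resulting $S(\Theta_H,*)$ satisfies (F1) and (F2). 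No single case presents serious conceptual difficulty; the bulk of the work is the bookkeeping and the uniform verification across the classification.
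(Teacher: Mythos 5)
Your overall strategy is the same as the paper's: a case-by-case verification through the Wan--Zhang classification of split strongly tempered spherical pairs without type $N$-roots, building reduction structures by choosing Weyl elements $w$, then subsets $\Theta\subset\Delta_G$ (taken as large as possible), then contracting elements $s$. However, there are two concrete inaccuracies in your sketch that would block a straightforward execution.

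First, the assertion that you select \emph{one} triple $(\Theta_\alpha, w_\alpha, s_\alpha)$ per simple root $\alpha\in\Delta_H-\Theta_H$ does not match what is actually needed. In the $(\GL_{n+1}\times\GL_n,\GL_n)$ case with $*=+$, an extra triple $(\Delta_G-\{\beta_n\},e,T(n,\varpi))$ must be added beyond the roots of $\Delta_H-\Theta_H$ to control the central direction ($C_H$), and similar extras appear in the $\SO$ and $\Sp$ cases. More importantly, in the $*=-$ case the cardinality of $S(\Theta_H,-)$ is substantially larger than $|\Delta_H-\Theta_H|$, with triples grouped by Weyl element.

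Second, and more serious, the claim that ``the case $*=-$ is handled by composing with a suitable long Weyl element to reduce to the $*=+$ case'' is incorrect in rank $\geq 2$. The negative cone $A_{\Theta_H}^{--,-}$ contains points whose torus coordinates straddle $1$ in absolute value (e.g.\ $|a_{j-1}|\leq 1<|a_j|$ for various $j$), so a single Weyl conjugation cannot carry the whole region into $A_{\emptyset_G}^-$. One must decompose $A_{\Theta_H}^{--,-}$ into sub-regions indexed by where the ``jump'' occurs and use a different Weyl element $w_j$ for each; this is visible already in the $(\GL_3\times\GL_2,\GL_2)$ example in the introduction, where the $-$ cone is cut into $\{a\geq 0>b\}$ and $\{0>a>b\}$, and becomes a nontrivial combinatorial problem for $\GL_{n+1}\times\GL_n$ and for $\Delta\BG_m\backslash\GL_4\times\GL_2$. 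The long-Weyl-element reduction only works in the rank-one case $\BG_m\subset\PGL_2$. Your remark that condition (F2) ``follows because $\Delta_H-\Theta_H$ is a basis'' also elides the real check: the constraints coming from $\Delta_G-\Theta$ live on the $G$-side and their translation into bounds on $\CT_{\Theta_H}^{--,*}$ must be verified triple-by-triple, and minimality of $S$ is a separate verification. In short, the high-level plan is right, but the $*=-$ decomposition and the explicit root/Weyl bookkeeping are the actual mathematical content, and your proposal replaces them with claims that would not survive the rank-two cases.
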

\begin{proof}
 According to \cite[Section 1.1]{WZ21}, 
 a  strongly tempered spherical pair  without type $N$-roots for $G$-split is  essentially  one of  the following
\begin{align*}
    &\GL_n\subset \GL_{n+1}\times\GL_n;\quad  \SO_n\subset \SO_{n+1}\times \SO_n;\\
    & \Delta\BG_m\bs \GL_2\times \GL_2\subset \Delta \BG_m\bs \GL_4\times\GL_2;\quad \Sp_4\times\Sp_2\subset \Sp_6\times\Sp_4;
   \end{align*}
or the pair $ \BG_m \subset \PGL_2$, which is already  dealt with in Example \ref{Wal}. Before analyzing the remaining four pairs   case by case, we explain the rough idea to produce a reduction structure  for $\Theta_H\subset \Delta_H$ and $*=+,0,-$ (resp. $*=\emptyset$) if $\sharp C_H= 1$ (resp. $C_H=\emptyset$):
\begin{itemize}
\item firstly, choose elements $w\in W$ such that $A_{\Theta_H}^{--,*}$ can be decomposed into a disjoint union $\bigsqcup_w A_{\Theta_H,w}^{--,*}$ where  $A_{\Theta_H,w}^{--,*}\neq\emptyset$  and $wA_{\Theta_H,w}^{--,*} w^{-1}\subset A_{\emptyset_G}^-$. For example, one can take the single element $w=e$ if and only if $A_{\Theta_H}^{--,*} \subset A_{\emptyset_G}^-$;
\item secondly, choose several $\Theta\subset\Delta_G$ for each $w$ such that for $\epsilon>0$ small enough, the set  
$$\bigcap_\Theta \left\{a\in A_{\Theta_H,w}^{--,*} \Big| \exists \alpha\in \Delta_G-\Theta, |\alpha(waw^{-1})|\geq \epsilon\right\}$$
is finite modulo $A_{\Theta_H}^\circ$. In this step, we always choose $\Theta$ as large as possible. 
\item lastly, choose the element $s$ for each pair $(\Theta,w)$.
\end{itemize}

  \s{(1) The $(\GL_{n+1}\times \GL_n, \GL_n)$-case.} Consider the pair
  $$H:=\GL_n \subset G:=\GL_{n+1}\times \GL_n, \quad g\mapsto (\diag\{g,1\},g).$$
  Let $B_n$ be the Borel subgroup of $\GL_n$ of upper-triangler matrices with $A_n\subset B_n$ the subgroup of  diagonal matrices. 
 Let  $P_{\emptyset_G}=B_{n+1}\times B_n$, $A_{\emptyset_G}=A_{n+1}\times A_n$ and
$P_{\emptyset_H}=B_n$, $A_{\emptyset_H}= A_n\subset A_{\emptyset_G}$. Note that $\Delta_{\GL_n}=\{\alpha_i| 1\leq i\leq n-1\}$ where for $1\leq i \leq n-1$
$$\alpha_i:\ A_n\to\BG_m;\quad (t_1,\cdots,t_n)\mapsto t_i/t_{i+1}.$$
Identify $\Delta_G$ with $\Delta_{\GL_{n+1}}\sqcup \Delta_{\GL_n}$.  Let $\beta_i$ (resp. $\alpha_i$) be  the $i$-th simple root of $\GL_{n+1}$ (resp. $\GL_n$).  
 Let $w_i\in W_G$, $1\leq i\leq n+1$ be the element
\begin{align*}
    & w_i(\diag\{t_1,\cdots, t_{i-1},t_i,\cdots, t_{n+1}\}, \diag\{a_1,\cdots, a_n\})w_i^{-1}\\
    =& (\diag\{t_1,\cdots,t_{i-1}, t_{n+1},t_i,\cdots,t_n\}, \diag\{a_1,\cdots, a_n\})
\end{align*}
Note that $w_{n+1}=e$. Let $T(i,\varpi):=\diag\{\varpi,\cdots,\varpi,1\cdots,1\} \in A_n(F)$ where the  first $i$-terms are $\varpi$.

 Let $C_H$ consists of the character 
$$ A_n\to\BG_m;\quad (t_1,\cdots,t_n)\mapsto t_n.$$
With respect to this choice, $A_{\emptyset_H}^{-,+}\sqcup A_{\emptyset_H}^{-,0}\subset A_{\emptyset_G}^{-}$.
 Now we  give a reduction structure $S(\Theta_H,*)$ for each
$\Theta_H\subset\Delta_H$ and $*=+,0,-$. 

 For $*=0$ and $\Theta_H\subset\Delta_H$, the set 
   $$S(\Theta_H,0)=\{\left(\Delta_G-\{\beta_i,\alpha_i\},w_{n+1}, T(i,\varpi)\right)\}_{\alpha_i\in \Delta_H-\Theta_H}$$
   is a reduction structure since
   \begin{itemize}
       \item for any $0< \epsilon\leq 1$,
    \begin{align*} &\bigcap_{(\Theta,w,s)\in S(\Theta_H,0)}\{a\in A_{\Theta_H}^{--,0}\bigg| \mathrm{if}\ waw^{-1}\in A_{\emptyset_G}^-,\ \exists \alpha\in \Delta_G-\Theta, |\alpha(waw^{-1})|\geq \epsilon\}\\
    =&\{a\in A_{\Theta_H}^{--,0}\bigg| |a_i/a_{i+1}|\geq \epsilon, \ \alpha_i\in \Delta_H-\Theta_H\}
    \end{align*}
    has finite image in $\CT_{\Theta_H}^{--,0}$;
    \item for each $(\Theta,w,s)\in S(\Theta_H,0)$, $s\in w^{-1}A_\Theta^-w\cap A_{\Theta_H}^{-,0}$ 
 and for any $n\in\BN$,  $\CT_{\Theta_H}^{--,0}-T_{s^n}(\CT_{\Theta_{H}}^{--,0})$ is a finite disjoint union of  translations of $\CT_{\Theta_H\cup\{\alpha_i\}}^{--,0}$.
   \end{itemize}
 For  $*=+$ and $\Theta_H\subset\Delta_H$, the set 
   $$S(\Theta_H,+)=\{(\Delta_G-\{\beta_i,\alpha_i\},w_{n+1}, T(i,\varpi)\}_{\alpha_i\in \Delta_H-\Theta_H}\bigsqcup\{(\Delta_G-\{\beta_n\},w_{n+1}, T(n,\varpi)\}$$
   is a reduction structure since
 \begin{itemize}
       \item for any $0<\epsilon\leq1$, \begin{align*} &\bigcap_{(\Theta,w,s)\in S(\Theta_H,+)}\{a\in A_{\Theta_H}^{--,+}\bigg| \mathrm{if}\ waw^{-1}\in A_{\emptyset_G}^-,\ \exists \alpha\in \Delta_G-\Theta, |\alpha(waw^{-1})|\geq \epsilon\}\\
    =&\{a\in A_{\Theta_H}^{--,+}\bigg| |a_i/a_{i+1}|\geq \epsilon, \ \alpha_i\in \Delta_H-\Theta_H, |a_n|\geq\epsilon\}
    \end{align*}
    has finite image in $\CT_{\Theta_H}^{--,+}$;
    \item for each $(\Theta,w,s)\in S(\Theta_H,+)$, $s\in w^{-1}A_\Theta^-w\cap A_{\Theta_H}^{-,+}$ 
 and for any $n\in\BN$,  $\CT_{\Theta_H}^{--,+}-T_{s^n}(\CT_{\Theta_{H}}^{--,+})$ is a finite disjoint union of  translations of $\CT_{\Theta_H\sqcup\{\alpha_i\}}^{--,+}$ (resp. $\CT_{\Theta_H}^{--,0}$ ) if $\Theta\neq\Delta_G-\{\beta_n\} $ (resp. $\Theta=\Delta_G-\{\beta_n\} $)
   \end{itemize}
For the case $*=-$, let $S=S(\Theta_H,-)$ be the disjoint union of the  set
\begin{align*}
    &\{\left(\Delta_G-\{\beta_i,\alpha_i\},w_{j}, T(i,\varpi)\right)\}_{\stackrel{\alpha_i\in \Delta_H-\Theta_H} {i+1\leq j}}\bigsqcup\{\left(\Delta_G-\{\beta_{i+1},\alpha_{i}\},w_{j}, \varpi^{-1}T(i,\varpi)\right)\}_{\stackrel{\alpha_{i}\in \Delta_H-\Theta_H}{ j\leq i+1}}  
\end{align*}
where $2\leq j\leq n$ and $\alpha_{j-1}\notin\Theta_H$, and the set
\[\{\left(\Delta_G-\{\beta_{i+1},\alpha_{i}\},w_{1}, \varpi^{-1}T(i,\varpi)\right)\}_{\alpha_{i}\in \Delta_H-\Theta_H} \bigsqcup \{\left(\Delta_G-\{\beta_{1}\},w_{1}, \varpi^{-1}T(0,\varpi)\right)\}. \]
Here the terms with  $w=w_j$ are designed to deal with the region $\{a\in A_{\Theta_H}^{--,-}\big| |a_{j-1}|\leq 1< |a_j|\}$. The set $S(\Theta_H,-)$ is a reduction structure since  
 \begin{itemize}
       \item for any $0<\epsilon\leq1$, \begin{align*} &\bigcap_{(\Theta,w,s)\in S}\{a\in A_{\Theta_H}^{--,-}\bigg| \mathrm{if}\ waw^{-1}\in A_{\emptyset_G}^-,\ \exists \alpha\in \Delta_G-\Theta, |\alpha(waw^{-1})|\geq \epsilon\}\\
    =&\bigsqcup_{\stackrel{1\leq j\leq n}{\alpha_{j-1}\notin\Theta_H}}\bigcap_{(\Theta,w,s)\in S}\{a\in A_{\Theta_H}^{--,-}\bigg||a_{j-1}|\leq1< |a_j|\ \&\  \mathrm{if}\ waw^{-1}\in A_{\emptyset_G}^-,\ \exists \alpha\in \Delta_G-\Theta, |\alpha(waw^{-1})|\geq \epsilon\}\\
    =&\bigsqcup_{\stackrel{1\leq j\leq n}{\alpha_{j-1}\notin\Theta_H}} \bigcap_{\alpha_i\in \Delta_H-\Theta_H}\{a\in A_{\Theta_H}^{--,-}\bigg| |a_i/a_{i+1}|\geq \epsilon,\ i+1\neq j;\quad \epsilon\leq|a_{j-1}|\leq1<|a_j|\leq \epsilon^{-1},\ i+1=j\}
    \end{align*}
    has finite image in $\CT_{\Theta_H}^{--,-}$. Here by convention, $\alpha_0\notin\Theta_H$ and $\epsilon\leq|a_0|\leq1$ hold.
    \item for each $(\Theta,w,s)\in S$, $s\in w^{-1}A_\Theta^-w\cap A_{\Theta_H}^{-,+}$ 
 and for any $n\in\BN$,  $\CT_{\Theta_H}^{--,-}-T_{s^n}(\CT_{\Theta_{H}}^{--,-})$ is a finite disjoint union of  translations of $\CT_{\Theta_H\sqcup\{\alpha_i\}}^{--,-}$ (resp. $\CT_{\Theta_H}^{--,0}$ ) if $\Theta\neq\Delta_G-\{\beta_1\} $ (resp. $\Theta=\Delta_G-\{\beta_1\} $)
   \end{itemize}

 \s {(2) The $(\SO_{n+1}\times \SO_n, \SO_n)$-case.}
 Let $V_{n+1}$ be a non-degenerate  quadratic $F$-space of dimension $n+1$ and $v \in V_{n+1}$ be an anisotropic vector. Let $V_n=\langle v\rangle^\perp\subset V_{n+1}$. Let  $\SO_{n+1}=\SO(V_{n+1})$ and identify  $\SO_n= \SO(V_n)$ with the stabilizer of $v$. Let $G =   \SO_{n+1} \times \SO_{n}$ and embed $H=\SO_n$  into $G$ diagonally.

When $n=1$, $H$ is anisotropic. When $n=2$, we can proceed  as in Example \ref{Wal} as $\SO(3)\cong \PGL(2)$ and $\SO(2)$ is a torus. For $n\geq3$, fix an orthogonal  decomposition
$$V_n=X_n\oplus W_n\oplus Y_n;\quad V_{n+1}= X_{n+1}\oplus W_{n+1}\oplus Y_{n+1}$$
such that $X_n\subset X_{n+1}$ and $Y_n\subset Y_{n+1}$ are totally isotropic subspaces of the same dimension  and $W_n, W_{n+1}$ are anisotropic.
By fixing  bases of $X_n$ and $X_{n+1}$ compatibly, we obtain minimal parabolic subgroups $P_n\subset \SO_n$ and $P_{n+1} \subset \SO_{n+1}$ with maximal split tori $A_n$ and $A_{n+1}$ respectively such that $P_n\subset P_{n+1}$ and $A_n\subset A_{n+1}$. Take $A_{\emptyset_G}=A_{n+1}\times A_n\subset P_{\emptyset_G}=P_{n+1}\times P_n$,  $A_{\emptyset_H}=A_n\subset P_{\emptyset_H}= P_n$.

Let $r_i:=\dim_F X_i$. Then for $i=n,n+1$,
\begin{itemize}
    \item the Levi factor of $P_i$ is isomorphic to  $(F^\times)^{r_i}\times \SO(W_i)$ and  $A_i$ is isomorphic to $(F^\times)^{r_i}$;
    \item the set of simple roots $\Delta_{\SO_i}$ consists of characters
    $$\alpha_j^i:\ A_i\to \BG_m;\quad (a_1,\cdots, a_{r_i})\mapsto a_j/a_{j+1} \quad \forall\ 1\leq j\leq r_i-1$$
    together with
    \begin{itemize}
        \item the character $\alpha_{r_i}^i(a)= a_{r_i}$ if $\dim_F W_i=1$;
        \item the character $\alpha_{r_i}^i(a)= a_{r_i-1}a_{r_i}$ if $\dim_F W_i=0$.
    \end{itemize}
\end{itemize}
 Identify $\Delta_G $ with $\Delta_{\SO_{n+1}}\sqcup \Delta_{\SO_n}$ and write $\alpha_j^{n+1}$ (resp. $\alpha_j^{n}$) as $\beta_j$ (resp. $\alpha_j$). For any $1\leq j\leq r_n$, let $T(j,\varpi)=(\varpi,\cdots,\varpi,1\cdots,1)\in A_{\Theta_H}^-$ with the  first $j$-terms  being $\varpi$.

When $r_{n+1}=r_n+1$, or equivalently $\dim_F W_n=1$,  one has $A_{\emptyset_H}^-\subset A_{\emptyset_G}^-$. In this case, $\Theta_H\subset\Delta_H$ admits  the reduction structure 
$$\{(\Delta_G-\{\beta_j,\alpha_j\},e, T(j,\varpi))_{\alpha_j\in \Delta_H-\Theta_H, j<r_n}, (\Delta_G-\{\beta_{r_n}, \beta_{r_n+1},\alpha_{r_n}\},e, T(r_n,\varpi))_{\alpha_{r_n}\in \Delta_H-\Theta_H}\}.$$

 When $r_{n+1}=r_n$, or equivalently $\dim_F W_n=0$, let $w\in W_G$ be the element such that
 $$w^{-1}\left( (a_1,\cdots,a_{r_n-1},a_{r_n}), (b_1,\cdots,b_{r_n})\right)w=\left( (a_1,\cdots,a_{r_n-1},a_{r_n}^{-1}), (b_1,\cdots,b_{r_n})\right)$$
 Then for $\Theta_H\subset\Delta_H$,
 \begin{itemize}
     \item if $\alpha_{r_n-1}\in\Theta_H$, then $A_{\Theta_H}^-\subset A_{\emptyset_G}^-$ and $\Theta_H$ has the reduction structure
      $$ \{(\Delta_G-\{\beta_j,\alpha_j\},e, T(j,\varpi))_{\alpha_j\in \Delta_H-\Theta_H}\},$$
      \item if $\alpha_{r_n-1}\notin \Theta_H$ and $\alpha_{r_n}\in\Theta_H$, then $w^{-1}A_{\Theta_H}^-w\subset A_{\emptyset_G}^-$ and $\Theta_H$ has the reduction structure
      $$ \{(\Delta_G-\{\beta_j,\alpha_j\},w, T(j,\varpi))_{\alpha_j\in \Delta_H-\Theta_H, j\leq r_n-2},\ (\Delta_G-\{\beta_{r_n}, \alpha_{r_n-1}\},w, T(r_n-1,1))\}$$
      where $T(r_n-1,1)=(\varpi,\cdots,\varpi,\varpi^{-1})\in A_{\Theta_H}^-$ with the first $(r_n-1)$-terms being $\varpi$,
      \item if $\alpha_{r_n-1}\notin \Theta_H$ and $\alpha_{r_n}\notin\Theta_H$, then $S(\Theta_H)$ admits the reduction structure
      \begin{align*}
         &\bigsqcup\{(\Delta_G-\{\beta_j,\alpha_j\},e, T(j,\varpi)),(\Delta_G-\{\beta_j,\alpha_j\},w, T(j,\varpi))\}_{\alpha_j\in \Delta_H-\Theta_H, j\leq r_n-2} \\
         \bigsqcup&\{(\Delta_G-\{\beta_{r_n-1},\alpha_{r_n-1},\alpha_{r_n}\},e, T(r_n-1,\varpi)), (\Delta_G-\{\beta_{r_n-1},\alpha_{r_n-1},\alpha_{r_n}\},w, T(r_n-1,\varpi))\}\\
          \bigsqcup&\{(\Delta_G-\{\beta_{r_n},\alpha_{r_n}\},e, T(r_n,\varpi)), (\Delta_G-\{\beta_{r_n},\alpha_{r_n-1}\},w, T(r_n-1,1)\}
      \end{align*} 
 \end{itemize}

\s {(3) The $\Delta\BG_m\bs \GL_2\times \GL_2\subset \Delta \BG_m\bs \GL_4\times\GL_2$-case.} For the pair
$$H:=\Delta\BG_m\bs \GL_2\times \GL_2 \hookrightarrow G:=\Delta\BG_m\bs \GL_4\times \GL_2,\quad (g_1,g_2)\mapsto (\diag\{g_1,g_2\},g_2),$$
take  $A_{\emptyset_G}=\Delta\BG_m\bs A_4\times A_2\subset P_{\emptyset_G}=\Delta\BG_m\bs B_4\times B_2$  and $A_{\emptyset_H}=\Delta\BG_m\bs A_2\times A_2\subset P_{\emptyset_H}=\Delta\BG_m\bs B_2\times B_2$. Identify
 $\Delta_G$ with $\Delta_{\GL_4}\sqcup \Delta_{\GL_2}$ and  $\Delta_H$ with $\Delta_{\GL_2}\sqcup \Delta_{\GL_2}$.
 For $i=1,2,3$, let $\beta_i\in\Delta_{\GL_4}\subset \Delta_G$ be the $i$-th simple root.
Let $\alpha$ (resp. $\alpha_i$, $i=1,2$) be the unique simple root in $\Delta_{\GL_2}\subset \Delta_G$ (the $i$-th copy of $\Delta_{\GL_2}$ in $\Delta_H$).
For any permutation $abcd$ of $1234$, let $w_{abcd}\in W_G$ be the element such  that
$$w_{abcd}((t_1,t_2,t_3,t_4),(a_1,a_2))w_{abcd}^{-1}=((t_{a},t_{b},t_{c},t_{d}),(a_1,a_2)).$$
Denote by $a_i \in A_2(F)$, $i=0,1,2$ with
\[a_0 = \matrixx{1}{0}{0}{1}, \quad a_1 = \matrixx{\varpi}{0}{0}{1}, \quad a_2 = \matrixx{\varpi}{0}{0}{\varpi}\]
and $s_{ij} = (a_i,a_j) \in A_{\emptyset_H}(F)$, $0 \leq i,j \leq 2$. Let $C_H$ consist of the character
$$\BG_m\bs A_2\times A_2\to\BG_m,\quad ((t_1,t_2),(t_3,t_4))\mapsto t_2/t_3.$$
With respect to this choice, $A_{\emptyset_H}^{-,0}\sqcup A_{\emptyset_H}^{-,+}\subset A_{\emptyset_G}^{-}$. Then
for any $\Theta_H \subset \Delta_H$ and $*=+,0,-$, one can choose the following reduction structure:  the terms with  $w=w_{abcd}$ are designed to deal with the region $\{((t_1,t_2),(t_3,t_4))\in A_{\Theta_H}^{--,-}\big|\ |t_a|\leq |t_b|\leq |t_c|\leq |t_d|\}$

\begin{table}[h!]
  \caption{Reduction Structure for $\Delta\BG_m\bs \GL_2\times \GL_2\subset \Delta \BG_m\bs \GL_4\times\GL_2$, $*=0,+$}
  \begin{center}
    \begin{tabular}{l|l|l} 
      \toprule
      $\Theta_H$ & Members in $S(\Theta_H,0)$ & Members in $S(\Theta_H,+)$  \\
      \midrule
      \multirow{1}{*}{$\Delta_H$} &  $S(\Delta_H,0)=\emptyset$ & $\big(\Delta_G-\{\beta_2\}, w_{1234},s_{20}\big)$ \\
      \midrule
      \multirow{2}{*}{$\{\alpha_1\}$} & $\big(\Delta_G-\{\beta_3,\alpha\}, w_{1234},s_{21}\big)$ &  $\big(\Delta_G-\{\beta_2\}, w_{1234},s_{20} \big)$\\
   & & $ \big(\Delta_G-\{\beta_3,\alpha\}, w_{1234},s_{21}\big)$ \\
       \midrule
      \multirow{2}{*}{$\{\alpha_2\}$} & $\big(\Delta_G-\{\beta_1\}, w_{1234},s_{10}\big)$ & $\big(\Delta_G-\{\beta_2\}, w_{1234},
      s_{20} \big)$\\
      & & $\big(\Delta_G-\{\beta_1\}, w_{1234},s_{10}\big)$\\
      \midrule
      \multirow{3}{*}{$\emptyset_H$} & $
      \big(\Delta_G-\{\beta_1\}, w_{1234},s_{10}\big)$ &  $\big(\Delta_G-\{\beta_2\}, w_{1234},s_{20}\big)$\\
 &     $\big(\Delta_G-\{\beta_3,\alpha\}, w_{1234},s_{21}\big)$ &
        $\big(\Delta_G-\{\beta_1\}, w_{1234},s_{10}\big)$\\
   &  & $\big(\Delta_G-\{\beta_3,\alpha\}, w_{1234},s_{21}\big)$\\
      \bottomrule
    \end{tabular}
  \end{center}
\end{table}

\begin{table}[h!]
  \caption{Reduction Structure for $\Delta\BG_m\bs \GL_2\times \GL_2\subset \Delta \BG_m\bs \GL_4\times\GL_2$, $*=-$}
  \begin{center}
    \begin{tabular}{l|l} 
      \toprule
      $\Theta_H$ & Members in $S(\Theta_H)$ (each line with the same Weyl element)  \\
      \midrule
      \multirow{1}{*}{$\Delta_H$} & 
       $\big(\Delta_G-\{\beta_2\}, w_{3412},s_{02}\big)$ \\
      \midrule
      \multirow{2}{*}{$\{\alpha_1\}$} 
      & $\big(\Delta_G-\{\beta_1,\alpha\}, w_{3124}, s_{01}\big),
      \big(\Delta_G-\{\beta_3,\alpha\}, w_{3124}, s_{21}\big)$ \\
      & $\big(\Delta_G-\{\beta_1,\alpha\}, w_{3412}, s_{01}\big),
      \ \big(\Delta_G-\{\beta_2\}, w_{3412}, s_{02}\big)$ \\
      \midrule
      \multirow{2}{*}{$\{\alpha_2\}$}  &     $\big(\Delta_G-\{\beta_1\}, w_{1342},s_{10}\big),\ \big(\Delta_G-\{\beta_3\}, w_{1342},s_{12}\big)$ \\ 
      & $\big(\Delta_G-\{\beta_3\}, w_{3412},s_{12}\big),\ \big(\Delta_G-\{\beta_2\}, w_{3412},s_{02}\big)$\\
       \midrule
      \multirow{5}{*}{$\emptyset_H$} 
      & $\big(\Delta_G-\{\beta_1\}, w_{1324},s_{10}\big),\
      \big(\Delta_G-\{\beta_2,\alpha\}, w_{1324},s_{11}\big),\
      \big(\Delta_G-\{\beta_3,\alpha\}, w_{1324},s_{21}\big)$ \\
       & $\big(\Delta_G-\{\beta_1\}, w_{1342},s_{10}\big),\
      \big(\Delta_G-\{\beta_3\}, w_{1342} ,s_{12}\big),\
      \big(\Delta_G-\{\beta_2,\alpha\}, w_{1342},s_{11}\big)$ \\
      & $\big(\Delta_G-\{\beta_1,\alpha\}, w_{3124},s_{01}\big),\
      \big(\Delta_G-\{\beta_2,\alpha\}, w_{3124},s_{11}\big),\
      \big(\Delta_G-\{\beta_3,\alpha\}, w_{3124},s_{21} \big)$ \\
      & $\big(\Delta_G-\{\beta_1,\alpha\}, w_{3142},s_{01}\big),\
      \big(\Delta_G-\{\beta_2,\alpha\}, w_{3142},s_{11}\big),\
      \big(\Delta_G-\{\beta_3\}, w_{3124},s_{12}\big)$ \\
      & $\big(\Delta_G-\{\beta_1,\alpha\}, w_{3412} ,s_{01}\big),\
      \big(\Delta_G-\{\beta_3\}, w_{3412},s_{12}\big),\
      \big(\Delta_G-\{\beta_2\}, w_{3412},s_{02}\big)$ \\
      \bottomrule
    \end{tabular}
  \end{center}
\end{table}

\s {(4) The $\Sp_4\times\Sp_2\subset \Sp_6\times\Sp_4$-case.} Let $w_n\in \GL_n(F)$ be the anti-diagonal matrix with entries $1$. For $n=2m$, define the split symplectic group
$$\Sp_n=\{g\in\GL_{2m}| {}^tg J_{2m} g=J_{2m}\}\quad J_{2m}=\begin{pmatrix} 0 & -w_m\\ w_m & 0\end{pmatrix}.$$
 Let $A_n\subset B_n\subset \Sp_n$  be the maximal split torus of diagonal matrices and the Borel subgroup  of upper triangular matrices respectively. Then under the isomorphism
$$\BG_m^m\cong A_n;\quad (t_1,\cdots, t_m)\mapsto (t_1,\cdots, t_m,t_m^{-1},\cdots,t_1^{-1}),$$
 $\Delta_{\Sp_n}=\{\alpha_i|i=1,\cdots, m\}$ where
  $\alpha_i((t_1,\cdots,t_m))=t_i/t_{i+1}$ (resp. $t_i^2$) if $i< m$( resp. $i=m$).

For the pair $$H:=\Sp_4\times \Sp_2\hookrightarrow G:=\Sp_6\times\Sp_4,\quad
 \left(h_1, h_2=\begin{pmatrix} a & b \\ c & d\end{pmatrix}\right)\mapsto\left( \begin{pmatrix} a & 0 & b\\ 0 & h_1 & 0\\ c & 0 & d\end{pmatrix}, h_1\right),$$
take $A_{\emptyset_H}=A_4\times A_2\subset P_{\emptyset_H}=B_4\times B_2$ and $A_{\emptyset_G}=A_6\times A_4\subset P_{\emptyset_G}=B_6\times B_4$. Identify $\Delta_H$ with $\Delta_{\Sp_4}\sqcup \Delta_{\Sp_2}$ and $\Delta_G$ with $\Delta_{\Sp_6}\sqcup \Delta_{\Sp_4}$. Denote by $\gamma_i$ (resp. $\beta_i$, resp. $\alpha_i$) the $i$-th simple root of $A_6$ (resp. $A_4$, resp. $A_2$).
Let $w_1=e$ and $w_2,w_3\in W_G$ such that
$$w_2((t_1,t_2,t_3), (a_1,a_2))w_2^{-1}=((t_2,t_1,t_3),(a_1,a_2)),\quad w_3((t_1,t_2,t_3), (a_1,a_2))w_3^{-1}=((t_2,t_3,t_1),(a_1,a_2)).$$

Denote by $a_i \in A_4(F)$, $i=0,1,2$ with
\[a_0 = (1,1), \quad a_1 = (\varpi,1), \quad a_2 = (\varpi,\varpi)\]
and $s_{ij} = (a_i,\varpi^j) \in A_{\emptyset_H}(F)$, $i=0,1,2$ and $j=0,1$.
Then for any $\Theta_H \subset \Delta_H$, one can choose the following reduction structure: the terms with $w=w_i$, $i=1,2,3$ are designed to deal with the region 
$$\{\left((t_1,t_2),t_3\right)\in A_{\Theta_H}^{--}\ \big|\ |t_3|\leq|t_1|\leq|t_2|\leq 1,\ \mathrm{resp.}\ |t_1|\leq|t_3|\leq|t_2|\leq 1,\ \mathrm{resp.}\  |t_1|\leq|t_2|\leq|t_3|\leq 1\}$$
\begin{table}[h!]
  \caption{Reduction Structure for $\Sp_4\times\Sp_2\subset \Sp_6\times\Sp_4$}
  \begin{center}
    \begin{tabular}{l|l} 
      \toprule
      $\Theta_H$ & Members in $S(\Theta_H)$ (each line with the same Weyl element)  \\
      \midrule
      \multirow{1}{*}{$\Theta_H \supset \{\alpha_1\}$} &
      $(\Delta_G-\{\gamma_i,\beta_i\},w_3,s_{i0})$ with $i$ satisfying $\beta_i\in \Delta_H-\Theta_H$ \\
      \midrule
      \multirow{1}{*}{$\{\beta_1,\beta_2\}$} & $\big(\Delta_G-\{\gamma_1\},e, s_{01} \big)$ \\
      \midrule
      \multirow{2}{*}{$\{\beta_1\}$} & $\big(\Delta_G-\{\gamma_1\},e,s_{01}\big),\
      \big(\Delta_G-\{\gamma_3,\beta_2\}, e,s_{21}\big )$ \\
      & $\big(\Delta_G-\{\gamma_2,\beta_2\},w_3, s_{20}\big),\
      \big(\Delta_G-\{\gamma_3,\beta_2\}, w_3,s_{21}\big)$ \\
      \midrule
      \multirow{2}{*}{$\{\beta_2\}$} & $\big(\Delta_G-\{\gamma_1\},e,s_{01}\big),\
      \big(\Delta_G-\{\gamma_2,\beta_1\}, e, s_{11}\big)$\\
      &$\big(\Delta_G-\{\gamma_1,\beta_1\},w_2,s_{10}\big),\
      \big(\Delta_G-\{\gamma_2,\beta_1\}, w_2, s_{11}\big)$ \\
      \midrule
      \multirow{3}{*}{$\emptyset_H$} & $\big(\Delta_G-\{\gamma_1\},e,s_{01}\big),\
      \big(\Delta_G-\{\gamma_2,\beta_1\}, e,s_{11}\big),\
      \big(\Delta_G-\{\gamma_3,\beta_2\}, e,s_{21}\big)$ \\
      &$\big(\Delta_G-\{\gamma_1,\beta_1\},w_2,s_{10} \big),\
      \big(\Delta_G-\{\gamma_2,\beta_1\}, w_2,s_{11}\big),\
      \big(\Delta_G-\{\gamma_3,\beta_2\}, w_2, s_{21}\big)$ \\
      &$\big(\Delta_G-\{\gamma_1,\beta_1\},w_3, s_{10}\big),\
      \big(\Delta_G-\{\gamma_2,\beta_2\},w_3,s_{20}\big),\
      \big(\Delta_G-\{\gamma_3,\beta_2\}, w_3, s_{21}\big)$  \\
    \bottomrule
    \end{tabular}
  \end{center}
\end{table}

\end{proof}


 Proposition \ref{crucial} deals with {\em split} spherical varieties.  For non-split ones, we  record the following:

\begin{prop}\label{ggp-unitary}
Let  $K/F$ be a quadratic field extension. Let $V_{n+1}$ be a non-degenerate Hermitian space with
respect to $K/F$ of dimension $n+1$.  Let  $\RU_{n+1}=\RU(V_{n+1})$ be the unitary group of $V_{n+1}$. Let $v \in V_{n+1}$ be an anisotropic vector and
$V_n=\langle v\rangle^\perp\subset V_{n+1}$. Identify
$\RU_n= \RU(V_n)$ with the stabilizer of $v$. Let $G =   \RU_{n+1} \times \RU_{n}$ and embed $H=\RU_n$
into $G$ diagonally. Then all the statements in Theorem \ref{main} holds for the strongly tempered spherical pair $(G,H)$.
\end{prop}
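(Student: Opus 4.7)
The plan is to invoke Theorem \ref{contro} directly. The preliminary ingredients in its proof (Theorem \ref{admjac}, Theorem \ref{cano}, Theorem \ref{Cartan}, Proposition \ref{Vol}, Corollary \ref{Asym}) all hold for arbitrary reductive $G$ with no splitness hypothesis; the split condition in Theorem \ref{main} entered solely through Theorem \ref{crucial}, where the classification of \cite{WZ21} was used. Thus it suffices to verify two things: (a) the pair $(\RU_{n+1} \times \RU_n, \RU_n)$ is strongly tempered, and (b) it admits a reduction structure in the sense of Definition \ref{controllable}. Statement (a) is the absolute convergence of unitary GGP period integrals on tempered representations, due to Harris and Beuzart-Plessis.

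For (b), I would mimic verbatim case (2) of the proof of Theorem \ref{crucial}. The cases $n=1,2$ are immediate: $\RU_1$ is anisotropic and $\RU_2$ is a torus, reducing to Example \ref{aniso} and Example \ref{Wal} respectively. For $n \geq 3$, fix a Witt decomposition
\[V_n = X_n \oplus W_n \oplus Y_n, \quad V_{n+1} = X_{n+1} \oplus W_{n+1} \oplus Y_{n+1}\]
with $X_n \subset X_{n+1}$, $Y_n \subset Y_{n+1}$ totally isotropic of the same dimension and $W_n,W_{n+1}$ anisotropic. Compatible bases of $X_n, X_{n+1}$ produce a chain of minimal parabolics $P_n \subset P_{n+1}$ with maximal split tori $A_n \subset A_{n+1}$; set $r_i := \dim_K X_i$, the split rank of $\RU_i$. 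The center of $\RU_n$ is the norm-one torus of $K/F$, which is anisotropic, so we may take $C_H = \emptyset$.

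The key point is that the combinatorial data used in the orthogonal construction survives unchanged. The set $\Delta_{\RU_i}$ consists of the characters $\alpha_j^i(a) = a_j/a_{j+1}$ for $1 \leq j \leq r_i - 1$ together with one extra root of shape $a_{r_i}$ or $a_{r_i-1}a_{r_i}$ depending only on whether $\dim_K W_i$ is positive or zero, and one still has $r_{n+1} - r_n \in \{0,1\}$ with the same dichotomy as in the orthogonal case. The Weyl group of $\RU_i$ at its split torus agrees with that of $\SO_i$ of the same Witt index, so the Weyl element $w$ used in case (2) (sending $a_{r_n}$ to $a_{r_n}^{-1}$) makes sense here and has the stated effect on $A_{\emptyset_H}$. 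Consequently the tables of triples $(\Theta, w, s)$ listed in case (2) can be transferred verbatim to the unitary setting.

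The main obstacle is essentially bookkeeping: one must check that the verifications of conditions (F1) and (F2) depend only on the absolute values $|\alpha(waw^{-1})|$ and the combinatorial shape of $\Delta_G$, both of which are insensitive to replacing the orthogonal group by the unitary group of the same Witt index. Granting this, Theorem \ref{contro} produces the rationality conjecture for $\pi$ and the meromorphy conjecture on a Zariski-dense open subset, as well as the full meromorphy conjecture whenever the discrete support of $\pi$ is rigid around each point of $\Sigma$ — which holds, for instance, whenever $\pi|_x$ is parabolically induced from a regular supercuspidal (Proposition \ref{quasi-split}).
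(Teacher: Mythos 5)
Your approach is the same as the paper's (which dispatches this proposition in one sentence: ``similarly as the case $(\SO_{n+1}\times\SO_n,\SO_n)$''), and you correctly isolate that the split hypothesis in Theorem \ref{main} enters only through Theorem \ref{crucial}, so that Theorem \ref{contro} applies once strong temperedness and a reduction structure are produced. Two of your justifying assertions are inaccurate, however. First, the relative root system of $\RU_i$ at its maximal split torus is of type $C_{r_i}$ (if $\dim_K W_i=0$) or $BC_{r_i}$ (if $\dim_K W_i\geq 1$), \emph{not} $D_{r_i}$ or $B_{r_i}$ as for $\SO_i$: when $\dim_K W_i=0$ the last simple root is $a\mapsto a_{r_i}^2$, not $a\mapsto a_{r_i-1}a_{r_i}$, and the relative Weyl group is the full hyperoctahedral group $(\BZ/2)^{r_i}\rtimes S_{r_i}$ rather than the index-two subgroup $W(D_{r_i})$. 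So the claim that ``both the absolute values and the combinatorial shape of $\Delta_G$ are insensitive to replacing the orthogonal group by the unitary group'' is false as stated. Fortunately the discrepancy only makes matters easier: since $|\alpha_{r_n}(a)|=|a_{r_n}|^2\leq 1$ forces $|a_{r_n}|\leq 1$, one has $A_{\emptyset_H}^-\subset A_{\emptyset_G}^-$ unconditionally in the unitary setting, so the Weyl twist $w$ and the tripartite sub-case analysis at the end of case (2) collapse, and the single family $\{(\Delta_G-\{\beta_j,\alpha_j\},e,T(j,\varpi))\}_{\alpha_j\in\Delta_H-\Theta_H}$ (with the obvious tweak at $j=r_n$ when $r_{n+1}=r_n+1$) serves as a reduction structure. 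Second, $\RU_2$ is not a torus but a rank-one reductive group with anisotropic center, so the case $n=2$ does not reduce to Example \ref{Wal}; it must be run through the general $n\geq 2$ argument (or reduces to Example \ref{aniso} when $V_2$ is anisotropic). Neither imprecision affects the conclusion, but both should be corrected before the ``verbatim transfer'' claim is trustworthy.
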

\begin{proof}One can show that $(G,H)$ admits a reduction structure
similarly as the case $(\SO_{n+1} \times \SO_n, \SO_n)$.
\end{proof}

\s{\bf Declarations}
On behalf of all authors, the corresponding author states that there is no conflict of interest.

\s{\bf Statements}
Data sharing not applicable to this article as no datasets were generated or analysed during the current study.

\end{document}